\documentclass[11pt,a4paper]{article}
\usepackage[bottom=2.2cm,top=2.0cm,left=2.0cm,right=2.0cm]{geometry}

\usepackage{graphicx} 
\usepackage{amsthm}
\usepackage{amsmath}
\usepackage{amssymb}
\usepackage{amsfonts}
\usepackage{mathrsfs}
\usepackage{mathtools}
\mathtoolsset{showonlyrefs}
\usepackage{stmaryrd}
\usepackage{bm}
\usepackage{verbatim}
\usepackage[dvipsnames]{xcolor}
\usepackage{cancel}
\usepackage{enumitem}
\numberwithin{equation}{section}
\usepackage{tikz}
\usepackage{todonotes}
\usepackage{float}
\usepackage{multirow}

\usepackage{xifthen}

\usepackage{tikz}
\usepackage{footmisc}
\usetikzlibrary{positioning}
\usepackage{pgfplots}
\usetikzlibrary{external}
\usetikzlibrary{shapes}
\usetikzlibrary{positioning,arrows.meta}
\pgfplotsset{compat=newest} 
\usepgfplotslibrary{units} 
\usepackage{caption}
\usepackage{subcaption}
\usepackage{tabularx}

\usepackage{tcolorbox}
\definecolor{mioColore}{HTML}{E6E6FF}
\newtcolorbox{mioBox}{
  colback=mioColore,   
  colframe=blue!60!black,   
  arc=8pt,                  
  boxrule=1pt,              
  left=6pt, right=6pt, top=6pt, bottom=6pt
}
\definecolor{mioCommento}{HTML}{6666CC}
\definecolor{myblue}{HTML}{2437A6}%

\newcommand{\pa}[1]{{\color{black} #1}}

\theoremstyle{definition}
\newtheorem{definition}{Definition}
\newtheorem{assumption}{Assumption}
\theoremstyle{theorem}
\newtheorem{theorem}{Theorem}
\newtheorem{lemma}{Lemma}
\newtheorem{corollary}{Corollary}
\newtheorem{rmk}{Remark}
\newtheorem{proposition}{Proposition}

\newcommand{\bbf}[1]{\mathbf{#1}}
\newcommand{\scr}[1]{\mathscr{#1}}
\newcommand{\mcal}[1]{\mathcal{#1}}
\newcommand{\averagel}{\{\!\!\{}
\newcommand{\averager}{\}\!\!\}}

\newcommand{\jumpl}{[\![}
\newcommand{\jumpr}{]\!]}

\newcommand{\vh}[2]{\ifthenelse{\isempty{#2}}{\bbf{#1}_h}{\bbf{#1}_h^{\mcal{#2}}}}
\newcommand{\ph}[2]{\ifthenelse{\isempty{#2}}{#1_h}{#1_h^{\mcal{#2}}}}
\newcommand{\evh}[2]{\ifthenelse{\isempty{#2}}{\bbf{#1}_h}{\bbf{#1}_h^{#2}}}
\newcommand{\eph}[2]{\ifthenelse{\isempty{#2}}{#1_h}{#1_h^{#2}}}
\newcommand{\vph}[2]{\ifthenelse{\isempty{#2}}{\bbf{#1}_{\ph{p}{}}}{\bbf{#1}_{\ph{p}{#2}}}}

\newcommand{\ah}[3]{\ifthenelse{\isempty{#3}}{a_h(#1,#2)}{a_h^#3(#1,#2)}}
\newcommand{\bh}[3]{\ifthenelse{\isempty{#3}}{b_h(#1,#2)}{b_h^#3(#1,#2)}}
\newcommand{\sh}[2]{s_h(#1, #2)}

\newcommand{\normx}[2]{\ifthenelse{\isempty{#2}}{\|#1\|_{\bbf{X}_h}}{\|#1\|_{\bbf{X}_h^#2}}}
\newcommand{\normll}[2]{\ifthenelse{\isempty{#2}}{\|#1\|_{L^2(\Omega)}}{\|#1\|_{L^2(\Omega_#2)}}}
\newcommand{\tnormx}[1]{|\!|\!|#1|\!|\!|_{\bbf{X}_h}}
\newcommand{\normp}[2]{\ifthenelse{\isempty{#2}}{\|#1\|_{M_h}}{\|#1\|_{M_h^#2}}}
\newcommand{\tnormp}[1]{|\!|\!|#1|\!|\!|_{M_h}}

\newcommand{\tnormdiv}[1]{{|\!|\!|#1|\!|\!|_{div}}}
\newcommand{\tnorme}[1]{|\!|\!|#1|\!|\!|_{E}}
\newcommand{\tnormxq}[1]{|\!|\!|#1|\!|\!|^2_{\bbf{X}_h}}
\newcommand{\tnormdivq}[1]{{|\!|\!|#1|\!|\!|^2_{div}}}
\newcommand{\tnormeq}[1]{|\!|\!|#1|\!|\!|^2_{E}}
\newcommand{\tnormpq}[1]{|\!|\!|#1|\!|\!|^2_{M_h}}

\title{\pa{Polytopal Discontinuous Galerkin Discretizations of \\ Coupled Non-Newtonian Stokes--Darcy Systems}$^*$}

\author{
  Paola F. Antonietti$^{a,1}$ \and
  Michele Botti$^{a,2}$ \and
  Nicola Parolini$^{a,3}$ \and
  Valentina Pederzoli$^{a,4}$ \and
  Marco Verani$^{a,5}$
}
\date{}

\begin{document}

\maketitle
\begingroup
\renewcommand\thefootnote{}
\footnotetext{$^*$\textbf{Funding}: This research has been funded by the European Union (ERC, NEMESIS, project number 101115663). Views and opinions expressed are, however, those of the author(s) only and do not necessarily reflect those of the European Union or the European Research Council Executive Agency. The present research is part of the activities of Dipartimento di Eccellenza 2023-2027. VP, MB, NP, MV and PFA are members of INdAM-GNCS.}
\footnotetext{$^a$ MOX--Dipartimento di Matematica, Politecnico di Milano, Piazza Leonardo da Vinci 32, Milan, 20133, Italy}
\footnotetext{$^1$paola.antonietti@polimi.it}
\footnotetext{$^2$michele.botti@polimi.it}
\footnotetext{$^3$nicola.parolini@polimi.it}
\footnotetext{$^4$valentina.pederzoli@polimi.it}
\footnotetext{$^5$marco.verani@polimi.it}
\endgroup
\begin{abstract}
We propose and analyze a polytopal discontinuous Galerkin method for the numerical approximation of a coupled non-Newtonian Stokes--Darcy system modeling the interaction between a non-Newtonian free-flow fluid and a non-Newtonian flow through a porous medium. Due to its geometric flexibility and arbitrary-order accuracy, the proposed discretization scheme is well-suited to configurations with complex geometries. We provide a complete \emph{a-priori} analysis that considers shear-dependent and velocity-dependent non-Newtonian viscosity models for the free-flow and porous media regions, respectively. The well-posedness, stability, and error bounds of the method are established in the framework of generalized inf-sup theory. Error estimates are confirmed by numerical results.
\end{abstract}

\section{Introduction}

Coupled non-Newtonian Stokes--Darcy systems model fluid flow in interconnected free-flow and porous-medium regions. 
More precisely, the motion of a viscous fluid in an open region unobstructed by any porous solid matrix is modeled by the nonlinear Stokes equations in the incompressible creeping laminar regime whereas the flow within the porous medium is governed by a nonlinear Darcy-type constitutive relation that links the seepage velocity to the pressure gradient via the permeability tensor.  At the interface, the two flow regimes are coupled through physically consistent transmission conditions, consisting of the Beavers--Joseph--Saffman condition \cite{Beavers_Joseph_1967,Saffman,JagerMikelic_BJS}, along with continuity of the normal flux and equilibrium of normal stresses across the interface. Coupled non-Newtonian Stokes--Darcy models appear in a wide range of applied science and industrial applications. Among the most relevant natural settings are subsurface flows in karst aquifers, where water circulating through underground conduits and fractures continuously interacts with the surrounding porous matrix, a system whose accurate modeling is essential for sustainable groundwater resource management and for assessing vulnerability to contamination \cite{CGHW,Discacciati_ApplicationThesis,Cesmelioglu_Applications}. In addition, biological flows, such as plasma filtration through capillary walls and fluid transport in biological tissues, provide further examples of such coupled flow phenomena. A wide range of industrial applications also falls within this modeling framework. For example, cross-flow and dead-end filtration processes are widely employed in the pharmaceutical, chemical, food processing, and aeronautical industries \cite{HWNW1, HWNW2,NumericalAnalysisCoupStokesDarcyFlowsIndustrialFitrations,badia_quaini_quarteroni_2008,Quaini_thesis}.  In these applications, the fluid typically exhibits non-Newtonian behavior, meaning that its constitutive response is governed by nonlinear laws. Unlike Newtonian fluids, the viscosity of these fluids depends on the strain rate, resulting in a nonlinear relationship between shear stress and the rate of deformation. They are typically classified into two categories: shear-thinning fluids, such as blood and melted polymers, whose viscosity decreases with increasing shear rate, and shear-thickening fluids, such as cornstarch-water mixtures, whose viscosity increases with increasing shear rate. We refer, for example, to \cite{book:nonnewtonian, DUNN1995689} for a comprehensive overview of non-Newtonian fluids. 

Given the applicative importance of coupled non-Newtonian Stokes and Darcy systems, many numerical methods have been proposed and analyzed in the literature for their approximate solution. In \cite{BonaldiBrennerDroniouMasson}, the authors consider a two-phase Darcy flow in a fractured porous medium solved with a gradient discretization method. In \cite{DistaccatiMiglioQuarteroni}, the coupling between the Navier--Stokes and a Darcy equation is discussed along with suitable interface conditions, and then it is solved using a Finite Element (FEM) approximation, while in \cite{ErvinJenkinsSunFEM}, Ervin, Jenkins, and Sun developed and analyzed a Finite Element Method for a coupled nonlinear Stokes--Darcy system with generalized constitutive laws in both the free-flow and porous-medium regions, establishing rigorous error estimates for the proposed approximation. In \cite{ErvinJenkinsSunMORTARFEM}, a mortar Finite Element formulation for the coupled problem is presented; the key idea is to reformulate the coupling as an interface matching problem, where an interface pressure (or Lagrange multiplier) is introduced on the Stokes--Darcy interface and approximated using a mortar finite element space. In \cite{RiviereYotov}, a locally conservative coupling has been studied using a mixed FEM for the Darcy flow and a Discontinuous Galerkin (DG) method for the Stokes flow, while in \cite{KanschatRiviere2010}, the authors propose a new numerical method employing divergence-conforming velocity spaces. DG approximations have also been analyzed in \cite{RiviereSD,GiraultRiviere} for Newtonian constitutive laws, both adopting a primal formulation of the Darcy equation and mixed-order elements. In \cite{CongreveHouston2014}, Congreve and Houston propose a two-grid $hp$-version DG method for quasi-Newtonian fluid flows. A non-Newtonian Stokes–Darcy–Forchheimer model discretized via a DG method on triangular grids has been presented in \cite{SDFDG}, employing $P^{l+1}/P^l$ discontinuous elements. The corresponding error analysis is carried out within a Lagrange multiplier framework, using $P^{l+1}$ discontinuous elements for the multiplier. In \cite{DistaccatiQuarteroni} an overview of some results on the coupling of Navier--Stokes and Darcy's equations is presented. Polytopal Discontinuous Galerkin (PolyDG) methods on general polygonal and polyhedral meshes have attracted considerable attention in recent years due to their flexibility in handling complex geometries and support mesh agglomeration techniques. Such methods have been successfully developed and analyzed for (linear) Darcy flows in fractured porous media, see, e.g., \cite{AntoniettiFacciolaRussoVerani2019, Antonietti2020} and for (linear) Stokes flow problems, with also applications to fluid-structure interaction, see, e.g., \cite{YeZhang2020, AntoniettiMascottoVeraniZonca2022}. In \cite{LipnikovVassilevYotov}, Lipnikov, Vassilev and Yotov present a PolyDG formulation for coupled, linear, Stokes–Darcy flows, while Li, Gao, Zhang and Chen, present a PolyDg method for the solution of a linear Stokes--Darcy--Darcy (bulk-fracture) model in \cite{LiGaoZhangChen_SDD}. For earlier developments of PolyDG methods for elliptic problems, we refer, e.g.,  to \cite{AntoniettiGianiHouston_hpCompositeDG, BassiBottiColomboDiPietroTesini_flexibilityagglomeration, CangianiDongHoustonGeorgoulisHouston, CangianiDongHoustonGeorgoulisHouston}; see also the monograph \cite{CangianiDongGeorgoulisHouston2017}, the review paper \cite{AntoniettiCangianiCollisDongGeorgoulisGianiHouston2016} and the references therein. \\

The aim of this work is to propose and analyze a PolyDG method for discretizing a coupled non-Newtonian Stokes--Darcy system, supplemented by the (physically consistent) Beaver--Joseph--Saffman interface condition, along with continuity of the normal flux and equilibrium of normal stresses across the interface. In the context of coupled Stokes-Darcy problems, it is convenient that the numerical method treats both the free flow and the porous flow equations within a unified and consistent framework, avoiding the need for different discretization strategies in the two subdomains. In particular, it is desirable that the method yields the same order of convergence in both regions, so that the accuracy of the overall coupled solution is not limited by a discrepancy in the approximation quality between the two sides of the interface \cite{BurmanHansbo_stabilizedSD}. In the context of the model under consideration, a PolyDG approach offers a particularly appealing numerical framework. Indeed, the discontinuous nature of the approximation space allows for local tuning of approximation parameters, such as the polynomial degree $p$ and the mesh size $h$, which proves especially beneficial when dealing with non-conforming (possibly agglomerated) meshes. Furthermore, the use of polytopal meshes provides remarkable geometric flexibility, making PolyDG particularly advantageous in coupled porous-fluid problems, where the physical domain typically comprises (highly) heterogeneous subregions with irregular (or curved) interfaces. 
Moreover, the weak enforcement of inter-element continuity via numerical fluxes guarantees a natural embedding of the interface conditions directly in the formulation and built-in stability in advection-dominated and heterogeneous regimes \cite{SobolevEmbedding, RiviereDG,AntoniettiBonaldiMazzieri}.
 In this paper,  we first analyze the well-posedness of the continuous model, and then propose and analyze an equal-order mixed PolyDG method for its numerical approximation. On the one hand, the equal-order choice for the discontinuous discretization spaces is appealing for the practical implementation of the method, but on the other hand, it introduces two additional challenges from the analysis viewpoint: establishing the well-posedness of the discrete problem and deriving a suitable inf-sup condition for the coupled system that accounts for the pressure stabilization terms required to ensure stability when equal-order polynomials are used. To address the first challenge, we extend the results of \cite{MappedCoercivity} to our discrete framework. The inf-sup condition is then established by extending a generalized inf-sup condition with pressure stabilization for the Stokes equation \cite{AntoniettiMascottoVeraniZonca2022} to the Darcy setting, via an approach based on space inclusions and norm ordering. A further consequence of the mixed formulation framework is that the penalty terms associated with the Darcy vector and scalar variables mirror their Stokes counterparts: the penalty for the vector variable scales as $O(h^{-1})$, while that for the scalar variable scales as $O(h)
$, cf. \cite{AntoniettiMascottoVeraniZonca2022}. Finally, we consider an Incomplete Interior Penalty (IIP) formulation and a simple fixed-point iteration scheme to solve the nonlinear system.  For the proposed discretization, we prove its well-posedness,  stability bounds, and we show that the approximation error measured in a suitable energy norm (defined as the sum of the DG norm of the Stokes' and Darcy's variables) of the error scales as $O(h^l)$, being $l$ the polynomial order chosen for all the discrete variables.\\

The remainder of the manuscript is organized as follows. Section~\ref{sec: model and weak formulation} presents the model problem, introduces the assumptions on the physical parameters, derives the weak formulation of the continuous problem, and establishes its well-posedness. Section~\ref{sec: discrete problem} introduces the PolyDG discrete framework and presents the derivation of the numerical scheme. Sections \ref{sec: well-posedness} and \ref{sec: error estimates} are devoted to the analysis of well-posedness and convergence analysis, respectively. Finally, Section~\ref{sec: numerical results} presents numerical convergence results for both Newtonian and non-Newtonian test cases, and in Section~\ref{sec: Conclusion} we draw the conclusions of our work, and discuss some possible future developments.

\section{Model problem and its well-posedness}\label{sec: model and weak formulation}
In this section, we introduce the mathematical model governing the coupled shear-thinning, non-Newtonian fluid-flow system. The free-flow region, where a non-Newtonian viscous fluid moves unobstructed in an open channel, is described by the Stokes equations for incompressible creeping laminar flow, which balance viscous stresses with pressure forces while neglecting inertial effects. The flow within the adjacent porous medium is instead governed by Darcy's law, which relates the seepage velocity linearly to the pressure gradient through the permeability tensor of the medium. The two flow regimes occupy distinct but adjacent subdomains, separated by an interface, and are coupled by conditions that enforce continuity of physical quantities across the interface. Specifically, the conservation of mass across the interface is ensured by the continuity of the normal component of the velocity, while the balance of normal forces is imposed through the continuity of the normal stress. The tangential behavior at the interface is described by the Beavers--Joseph--Saffman condition, which relates the tangential component of the viscous stress in the free flow region to the tangential velocity of the free fluid at the interface, neglecting the contribution of the seepage velocity in the porous medium. On the Stokes' boundary, we consider homogeneous Dirichlet boundary conditions (no-slip condition), assuming that the velocity of the fluid is equal to the velocity of the wall. Instead, on the Darcy's boundary, we assume the normal component of the seepage velocity to be zero, meaning that there is no fluid flow across that boundary \cite{SDFDG}.\\

We assume that the computational domain $\Omega \subseteq \mathbb{R}^d, \ d = 2,3$, is divided by a \pa{planar} interface $\Gamma$ into two polygonal/polyhedral open subdomains $\Omega_S$ and $\Omega_D$, such that $\overline{\Omega}_S \cup \overline{\Omega}_D = \overline{\Omega}$ and $\Omega_S \cap \Omega_D = \emptyset$, cf. Figure~\ref{fig:Bidominio}. Here, $\Omega_S$ is the free-flow region modeled by Stokes equations, and $\Omega_D$ is the porous region modeled by Darcy's law. We split the boundary of the domain in $\Gamma_S = \partial\Omega_S \cap \partial\Omega$ and $\Gamma_D = \partial\Omega_D \cap \partial\Omega$. Moreover, we define $\bbf{n}_S$, $\bbf{n}_D$ the unit normal vectors to $\Gamma_S$ and $\Gamma_D$, respectively. We define $\bbf{n}_{\Gamma}^S$ the normal unit vector to the interface $\Gamma$ directed towards $\Omega_D$, and $\bbf{n}_{\Gamma}^D = - \bbf{n}_{\Gamma}^S$ the unit normal vector to the interface $\Gamma$ directed towards $\Omega_S$. Finally, we define an orthogonal system of unit tangent vectors $\bbf{t}_{\Gamma,j}$, $1\leq j \leq d - 1$ on $\Gamma$.
\begin{figure}[H]
        \centering
        \begin{tikzpicture}[x={(1cm,0cm)}, y={(0.4cm,0.3cm)}, z={(0cm,1cm)}]
\definecolor{myblue}{RGB}{220,232,245}
\definecolor{myred}{RGB}{242,224,228}
\definecolor{myorange}{RGB}{244,235,220}
\def\d{2}

\fill[myorange] (0,0,0) -- (3,0,0) -- (3,0,4) -- (0,0,4) -- cycle; 
\fill[myorange] (0,0,4) -- (3,0,4) -- (3,\d,4) -- (0,\d,4) -- cycle; 
\fill[myorange] (0,0,0) -- (0,\d,0) -- (0,\d,4) -- (0,0,4) -- cycle; 
\fill[myorange] (0,\d,0) -- (3,\d,0) -- (3,\d,4) -- (0,\d,4) -- cycle; 
\fill[myorange] (0,0,0) -- (3,0,0) -- (3,\d,0) -- (0,\d,0) -- cycle; 

\fill[myred] (3,0,0) -- (6,0,0) -- (6,0,4) -- (3,0,4) -- cycle; 
\fill[myred] (3,0,4) -- (6,0,4) -- (6,\d,4) -- (3,\d,4) -- cycle; 
\fill[myred] (6,0,0) -- (6,\d,0) -- (6,\d,4) -- (6,0,4) -- cycle; 
\fill[myred] (3,\d,0) -- (6,\d,0) -- (6,\d,4) -- (3,\d,4) -- cycle; 
\fill[myred] (3,0,0) -- (6,0,0) -- (6,\d,0) -- (3,\d,0) -- cycle; 

\fill[myblue] (3,0,0) -- (3,\d,0) -- (3,\d,4) -- (3,0,4) -- cycle;

\draw[gray!60] (0,\d,0) -- (6,\d,0) -- (6,\d,4) -- (0,\d,4) -- cycle;
\draw[gray!60] (3,\d,0) -- (3,\d,4);
\draw[gray!60] (3,0,0) -- (3,\d,0);
\draw[gray!60] (3,0,4) -- (3,\d,4);
\draw[thick] (0,0,0) -- (6,0,0) -- (6,\d,0) -- (0,\d,0) -- cycle;
\draw[thick] (0,0,4) -- (6,0,4) -- (6,\d,4) -- (0,\d,4) -- cycle;
\draw[thick] (0,0,0) -- (0,\d,0) -- (0,\d,4) -- (0,0,4) -- cycle;
\draw[thick] (6,0,0) -- (6,\d,0) -- (6,\d,4) -- (6,0,4) -- cycle;
\draw[thick] (0,0,0) -- (6,0,0) -- (6,0,4) -- (0,0,4) -- cycle;
\draw[thick] (3,0,0) -- (3,0,4);
\node at (1.5, 0, 1.5) {\Large $\Omega_S$};
\node at (4.5, 0, 1.5) {\Large $\Omega_D$};
\node[anchor=east] at (3.3, 0, -0.4) {\large $\Gamma$};
\node[anchor=south] at (3.4, 0, 4.7) {\LARGE $\Omega$};
\filldraw (3, \d/2, 2) circle (2pt);
\draw[->, thick] (3, \d/2, 2) -- (4, \d/2, 2);
\node[anchor=south] at (4, \d/2, 2.1) {$\mathbf{n}_{\Gamma}^S$};
\draw[->, thick] (3, \d/2, 2) -- (2, \d/2, 2);
\node[anchor=south] at (2, \d/2, 2.1) {$\mathbf{n}_{\Gamma}^D$};
\end{tikzpicture}
        \caption{The computational domain.}\label{fig:Bidominio}
\end{figure} 
The non-Newtonian Stokes--Darcy coupled system reads: 
given $\bbf{f}_S, \bbf{f}_D$, find $(\bbf{u}_S, p_S)$ and $(\bbf{u}_D, p_D)$ such that:
\begin{equation}\label{eq: strong problem}
\begin{cases}
    -\nabla \cdot (g_S(|\bbf{D}(\bbf{u}^S)|)\bbf{D}(\bbf{u}^S) - p^S\bbf{I}) = \bbf{f}_S & \text{in } \Omega_S, \\[6pt]
    \nabla \cdot \bbf{u}^S = 0 & \text{in } \Omega_S, \\[6pt]
    \bbf{u}^S = 0 & \text{on } \Gamma_S, \\[6pt]
    \bbf{K}^{-1}g_D(|\bbf{u}^D|)\bbf{u}^D + \nabla p^D = \bbf{f}_D & \text{in } \Omega_D, \\[6pt]
    \nabla \cdot \bbf{u}^D = 0 & \text{in } \Omega_D, \\[6pt]
    \bbf{u}^D \cdot \bbf{n}_D = 0 & \text{on } \Gamma_D, \\[6pt]
    \bbf{u}^S\cdot\bbf{n}_{\Gamma}^S + \bbf{u}^D\cdot\bbf{n}_{\Gamma}^D = 0 & \text{on } \Gamma, \\[6pt]
    p^S - (g_S(|\bbf{D}(\bbf{u}^S)|)\bbf{D}(\bbf{u}^S)\bbf{n}_{\Gamma}^S)\cdot\bbf{n}_{\Gamma}^S = p^D & \text{on } \Gamma, \\[6pt]
    \bbf{u}^S\cdot\bbf{t}_{\Gamma,j} + \rho (g_S(|\bbf{D}(\bbf{u}^S)|)\bbf{D}(\bbf{u}^S)\bbf{n}_{\Gamma}^S)\cdot\bbf{t}_{\Gamma,j} = 0 & \text{on } \Gamma, \text{ for all } 1 \leq j \leq d-1
\end{cases}
\end{equation}
where $\bbf{D}(\bbf{u}) = (\nabla \bbf{u} + \nabla^T\bbf{u})/2$ is the symmetric gradient of the velocity $\bbf{u}$, $g_S(|\bbf{D}(\bbf{u}^S)|)$ and $g_D(|\bbf{u}^D|)$ are the velocity-dependent viscosities, $\bbf{K}$ is a symmetric, positive definite tensor representing the permeability of the porous medium, and $\rho>0$ is a frictional parameter. We state the following assumptions on the velocity-dependent viscosities $g_S$ and $g_D$.
\begin{assumption} \label{assu: condition on g1 and g2}
    We assume that $g_S(\cdot)$ and $g_D(\cdot)$ are positive and bounded, and that $g_S(|\bbf{A}|)\bbf{A}$ and $g_D(|\bbf{a}|)\bbf{a}$ are Lipschitz continuous and strongly monotone, i.e., there exist positive constants $\underline{\nu}_S, \overline{\nu}_S, \underline{\mu}_S$, $\widetilde{\mu}_S$, $\underline{\nu}_D, \overline{\nu}_D, \underline{\mu}_D$ and $\widetilde{\mu}_D$ such that, for any symmetric $\bbf{A},\bbf{B} \in\mathbb{R}^{d\times d}$ and for all $\bbf{a},\bbf{b} \in\mathbb{R}^d$:
\begin{align}
    &\underline{\mu}_S\leq g_S(|\bbf{A}|)\leq \widetilde{\mu}_S, \label{eq: g_S positive}\\
    &|g_S(|\bbf{A}|)\bbf{A} - g_S(|\bbf{B}|)\bbf{B}|\leq \overline{\nu}_S|\bbf{A} - \bbf{B}|, \label{eq: g_S bdd and Lip continuous}\\
    &(g_S(|\bbf{A}|)\bbf{A} - g_S(|\bbf{B}|)\bbf{B}, \bbf{A} - \bbf{B}) \geq \underline{\nu}_S |\bbf{A}-\bbf{B}|^2. \label{eq: g_S strongly monotone}\\
     &\underline{\mu}_D\leq g_D(|\bbf{a}|)\leq \widetilde{\mu}_D, \label{eq: g_D positive}\\
    &|g_D(|\bbf{a}|)\bbf{a} - g_D(|\bbf{b}|)\bbf{b}|\leq \overline{\nu}_D |\bbf{a} - \bbf{b}|, \label{eq: g_D bdd and Lip continuous}\\
    &(g_D(|\bbf{a}|)\bbf{a} - g_D(|\bbf{b}|)\bbf{b}, \bbf{a} - \bbf{b}) \geq \underline{\nu}_D |\bbf{a}-\bbf{b}|^2. \label{eq: g_D strongly monotone}
\end{align}
\end{assumption}

We consider the following non-Newtonian models for the viscosity \cite{ChowCarey, ErvinJenkinsSunFEM}.
We denote by $g_S(|\bbf{D}(\bbf{u})|)$ the dynamic viscosity of the fluid defined as:
\begin{equation}
    g_S(|\bbf{D}(\bbf{u})|) = \underline{\mu}_S + (\overline{\mu}_S - \underline{\mu}_S) G(\sqrt{2}|\bbf{D}(\bbf{u})|) \qquad \text{where } \overline{\mu}_S > \underline{\mu}_S > 0 ,
\end{equation}
while the effective viscosity $g_D(\cdot)$ for the non-Newtonian porous media flow is defined as:
\begin{equation}
    g_D(|\bbf{u}|) = \underline{\mu}_D + (\overline{\mu}_D - \underline{\mu}_D) G(\sqrt{2}|\bbf{u}|) \qquad \text{where }\overline{\mu}_D > \underline{\mu}_D > 0.
\end{equation}
The choices of $G(s)$ are listed in Table \ref{tab:non-newtonian flows}.
\begin{table}[H]
\centering
\caption{Non-Newtonian models \cite{SDFDG}.}\label{tab:non-newtonian flows}
\begin{tabular}{|l|c|l|}
\hline
\textbf{Model} & $\bbf{G(s)}$ &   \\
\hline
Power & $s^{r-2}$ & $1 < r < 2$  \\
Carreau & $(\delta+s^2)^{\frac{r-2}{2}}$ & $1 \leq r < 2$ , $\delta>0$ \\
Carreau--Yasuda & $(\delta+s^a)^{\frac{r-2}{a}}$ & $a > 0, \ 1 \leq r < 2$, $\delta>0$  \\
Generalized cross & $(\delta+s^r)^{a}$ & $r > 0, \ a < 0, \ ar + 1 \geq0$, $\delta>0$  \\
Powell--Eyring & $\sinh{(s)}^{-1}/s$ &   \\
\hline
\end{tabular}
\end{table}

\begin{rmk}
    If both the viscosities from $g_S(\cdot)$ and $g_D(\cdot)$ obey a Carreau--Yasuda model, i.e.:
    \begin{equation}
        g_S(|\bbf{D}(\bbf{u})|) = \underline{\mu}_S + (\overline{\mu}_S - \underline{\mu}_S) (\delta + |\bbf{D}(\bbf{u})|^2)^{\frac{r-2}{2}} \ \text{  and  } \ g_D(|\bbf{u}|) = \underline{\mu}_S + (\overline{\mu}_S - \underline{\mu}_S) (\delta+  |\bbf{u}|^2)^{\frac{r-2}{2}}
    \end{equation}
    then Assumption~\ref{assu: condition on g1 and g2} is satisfied provided that $\delta>0$ and $r \geq 1$, cf. \cite{SDFDG}. 
\end{rmk}
Finally, we assume that $k_{\min}|\bm{\xi}|^2\leq\bm{\xi}^T\bbf{K}\bm{\xi}\leq k_{\max}|\bm{\xi}|^2$ for all $\bm{\xi}\in\mathbb{R}^d$.\\

Throughout the paper, we make use of the following notation: $a \lesssim b$ meaning that $a \leq C b$, with $C$ a constant independent of the discretization parameter $h$ but that might depend on the $\overline{\mu}_S, \underline{\mu}_S, \overline{\mu}_D, \underline{\mu}_D, \overline{\nu}_S, \underline{\nu}_S, \overline{\nu}_D, \underline{\nu}_D$, the domain geometry and the polynomial degree.\\

We conclude this section by introducing the weak formulation of \eqref{eq: strong problem}. 
We also extend the continuous inf-sup condition for the Stokes problem to the coupled problem, using arguments based on space inclusions. First, we define the function spaces:
\begin{align*}
    &\bbf{X}^S = \{ \bbf{v}^S \in [H^1(\Omega_S)]^d: \bbf{v}^S = 0 \text{ on }\Gamma_S \} = [H^1_{0,\Gamma_S}(\Omega_S)]^d, \qquad M^S = L^2(\Omega_S), \\
    &\bbf{X}^D = \{ \bbf{v}^D \in [L^2(\Omega_D)]^d: \nabla \cdot \bbf{v}^D \in L^2(\Omega_D), \ \bbf{v}^D\cdot\bbf{n}_{\Gamma}^D = 0 \text{ on } \Gamma_D \}, \qquad M^D = L^2(\Omega_D), \\
    &\bbf{X} = \{(\bbf{v}^S, \bbf{v}^D) \in \bbf{X}^S \times \bbf{X}^D : \bbf{v}^S\cdot\bbf{n}_\Gamma^S = - \bbf{v}^D\cdot\bbf{n}_\Gamma^D \text{ on } \Gamma\}, \\ & M = \{(q^S,q^D)\in M^S\times M^D: (q^S,1)_{\Omega_S} = -(q^D,1)_{\Omega_D} \},
\end{align*}
and their norms:
\begin{flalign}
   & \| \bbf{v}^S \|_{\bbf{X}^S} = \| \bbf{v}^S \|_{H^1(\Omega_S)},
   \ \ \| p^S \|_{M^S} = \| p^S \|_{L^2(\Omega_S)}, \\
     & \| \bbf{v}^D \|_{\bbf{X}^D} = \| \bbf{v}^D \|_{L^2(\Omega_D)},
   \ \ \| p^D \|_{M^D} = \| p^D \|_{L^2(\Omega_D)}, \\
    &\| \bbf{v} \|_\bbf{X}= \| \bbf{v}^S \|_{\bbf{X}^S} +  \| \bbf{v}^D \|_{\bbf{X}^D}, 
   \ \ \| p \|_M = \| p^S \|_{L^2(\Omega_S)} + \| p^D \|_{L^2(\Omega_D)},
\end{flalign}
The weak form of \eqref{eq: strong problem} reads: find $\bbf{u} = (\bbf{u}^S,\bbf{u}^D) \in \bbf{X}$ and $p = (p^S,p^D) \in M$ such that
\begin{equation}\label{eq: weak SDF}
    \begin{cases}
        a(\bbf{u},\bbf{v}) + b(\bbf{v},p) = (\bbf{f},\bbf{v}) \ \ \ & \forall\bbf{v}\in \bbf{X},\\
        b(\bbf{u},q) = 0 \ \ \ &\forall q \in M,
    \end{cases}
\end{equation}
where
\begin{align}
    &a_S(\bbf{u}^S, \bbf{v}^S) = \int_{\Omega_S} g_S(|\bbf{D}(\bbf{u}^S)|)\bbf{D}(\bbf{u}^S):\bbf{D}(\bbf{v}^S)\ dx + \sum_{j=1}^{d-1} \int_\Gamma \rho^{-1}(\bbf{u}^S\cdot\bbf{t}_{\Gamma,j})(\bbf{v}^S\cdot\bbf{t}_{\Gamma,j})\ ds,\\
    &a_D(\bbf{u}^D, \bbf{v}^D) =  \int_{\Omega_D} \bbf{K}^{-1}g_D(|\bbf{u}^D|)\bbf{u}^D \cdot\bbf{v}^D \ dx, \\
    &a(\bbf{u},\bbf{v}) = a_S(\bbf{u}^S, \bbf{v}^S) + a_D(\bbf{u}^D, \bbf{v}^D), \\ 
    &b_S(\bbf{v}^S, p^S) = - \int_{\Omega_S} q^S\nabla\cdot\bbf{v}^S \ dx,\\
    &b_D(\bbf{v}^D, p^D) = - \int_{\Omega_D} q^D\nabla\cdot\bbf{v}^D \ dx,\\
    &b(\bbf{v}, p) = b_S(\bbf{v}^S, p^S) + b_D(\bbf{v}^D, p^D),\\
    &(\bbf{f}, \bbf{v}) = \int_{\Omega_S} \bbf{f}_S \cdot \bbf{v}^S dx + \int_{\Omega_D} \bbf{f}_D \cdot \bbf{v}^D dx.
\end{align}

\subsection{Well-posedness and stability}
To prove the well-posedness of problem \eqref{eq: weak SDF}, we first prove an inf-sup condition in the continuous setting by considering the inf-sup condition of the Stokes problem, and extending it to the coupled system.
\begin{theorem}[Inf-sup condition]
    For all $p=(p^S,p^D)\in M$ it holds
    \begin{equation}\label{thm: continuous infsup}
        \sup_{\bbf{v}\in \bbf{X}, \bbf{v} \neq \bbf{0}}\frac{b(\bbf{v}, p)}{\|\bbf{v}\|_{\bbf{X}}} \gtrsim \|p\|_{M}.
    \end{equation}
\end{theorem}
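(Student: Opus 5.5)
The plan is to build a single velocity field on the whole domain $\Omega$ from the Stokes inf-sup condition. The coupled constraint $(q^S,1)_{\Omega_S}=-(q^D,1)_{\Omega_D}$ means exactly that the glued pressure $\tilde p := p^S\chi_{\Omega_S}+p^D\chi_{\Omega_D}$ has zero mean on $\Omega$, so $\tilde p\in L^2_0(\Omega)$. We then invoke the classical continuous inf-sup condition for the divergence on $\Omega$ (Bogovskii's operator, valid for the Lipschitz polytope $\Omega$). It gives $\bbf{w}\in[H^1_0(\Omega)]^d$ with
\[
-\nabla\cdot\bbf{w}=\tilde p \quad\text{in } \Omega, \qquad \|\bbf{w}\|_{H^1(\Omega)}\lesssim\|\tilde p\|_{L^2(\Omega)} .
\]

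Next we set $\bbf{v}=(\bbf{v}^S,\bbf{v}^D)=(\bbf{w}|_{\Omega_S},\bbf{w}|_{\Omega_D})$ and check that $\bbf{v}\in\bbf{X}$. This is where the inclusion argument enters.
\begin{itemize}
\item $\bbf{v}^S\in[H^1(\Omega_S)]^d$ vanishes on $\Gamma_S\subset\partial\Omega$.
\item $\bbf{v}^D\in[H^1(\Omega_D)]^d\subset H(\mathrm{div},\Omega_D)$, and $\bbf{v}^D\cdot\bbf{n}_D=0$ on $\Gamma_D$ since $\bbf{w}=\bbf{0}$ there.
\item Because $\bbf{w}\in H^1(\Omega)$, its traces from both sides agree on $\Gamma$. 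Hence $\bbf{v}^S\cdot\bbf{n}_\Gamma^S+\bbf{v}^D\cdot\bbf{n}_\Gamma^D=0$.
\end{itemize}
The ordering of norms then gives
\[
\|\bbf{v}\|_{\bbf{X}}=\|\bbf{v}^S\|_{H^1(\Omega_S)}+\|\bbf{v}^D\|_{L^2(\Omega_D)}\le 2\|\bbf{w}\|_{H^1(\Omega)}\lesssim\|\tilde p\|_{L^2(\Omega)}\le\|p\|_M .
\]

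With this $\bbf{v}$ we compute
\[
b(\bbf{v},p)=-\int_{\Omega_S}p^S\nabla\cdot\bbf{w}-\int_{\Omega_D}p^D\nabla\cdot\bbf{w}=\|\tilde p\|_{L^2(\Omega)}^2\ge\tfrac12\|p\|_M^2 ,
\]
where the last step uses $(a+b)^2\le 2(a^2+b^2)$. Dividing by $\|\bbf{v}\|_{\bbf{X}}\lesssim\|p\|_M$ yields \eqref{thm: continuous infsup}. The case $p=0$ is trivial.

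The main obstacle is making sure the constructed field lies in the coupled space $\bbf{X}$, in particular that it satisfies the interface normal-flux condition. Using a single global $H^1_0(\Omega)$ field avoids this issue entirely. Solving separate Stokes and Darcy problems would force us to handle nonzero mean values on each subdomain through an interface flux correction. We also rely on the Darcy norm being only the $L^2$ norm, so that the $H^1$ bound dominates it. If the Darcy norm included $\|\nabla\cdot\bbf{v}^D\|$, the same estimate would still hold, since $\|\nabla\cdot\bbf{w}\|_{L^2}\lesssim\|\bbf{w}\|_{H^1}$.
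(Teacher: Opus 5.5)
Your proof is correct and is essentially the paper's argument. The coupling constraint in $M$ makes the glued pressure mean-free on $\Omega$, so the whole-domain Stokes inf-sup applies, which you realize explicitly with a Bogovskii field. Restriction then gives $[H^1_0(\Omega)]^d\subset\bbf{X}$ with $\|\cdot\|_{\bbf{X}}\lesssim\|\cdot\|_{H^1(\Omega)}$, which transfers the bound to the coupled space. Building the test function explicitly is the constructive form of the paper's chain of suprema. It has the small merit of never evaluating $\|\bbf{v}\|_{H^1(\Omega)}$ for a general $\bbf{v}\in\bbf{X}$, as the paper's intermediate step does.
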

\begin{proof}
    First, we observe that $[H^1_{0,\Gamma_{D}}(\Omega_D)]^d\subset \bbf{X}^D$,
    therefore, for all $\bbf{v}\in [H^1_{0,\partial\Omega}(\Omega)]^d$, one has
    $(\bbf{v}^S,\bbf{v}^D)=(\bbf{v}_{|\Omega_S},\bbf{v}_{|\Omega_D})$ is such that $\bbf{v}^S\cdot\bbf{n}_\Gamma^S = \bbf{v}^D\cdot\bbf{n}_\Gamma^D$. As a result, we have
    $[H^1_{0,\partial\Omega}(\Omega)]^d\subset \bbf{X}$
    which also implies that
    \begin{equation}\label{eq:continuous norm dis}
        \|\cdot\|_\bbf{X} \lesssim \|\cdot\|_{H^1(\Omega)}.
    \end{equation}
    Thus, considering the standard Stokes inf-sup condition due to the fact that $p\in M$ has zero average over $\Omega$, it is inferred that
    \begin{equation}\label{eq: stokes continuous infsup}
        \|p\|_{L^2(\Omega)} \lesssim \sup_{\bbf{v}\in [H^1_{0,\partial\Omega}(\Omega)]^d} \frac{b(\bbf{v}, p)}{\|\bbf{v}\|_{H^1(\Omega)}}
        \leq \sup_{\bbf{v}\in \bbf{X}, \bbf{v} \neq 0}
        \frac{b(\bbf{v}, p)}{\|\bbf{v}\|_{H^1(\Omega)}}
        \qquad \forall p \in M.
    \end{equation}
    since we are taking the superior on a larger space.
    Additionally, from \eqref{eq:continuous norm dis} we get 
    \begin{equation}\label{eq: continuous infsup fin}
        \sup_{\bbf{v}\in \bbf{X}, \bbf{v} \neq 0} \frac{b(\bbf{v}, p)}{\|\bbf{v}\|_{H^1(\Omega)}} \lesssim \sup_{\bbf{v}\in \bbf{X}, \bbf{v} \neq 0} \frac{b(\bbf{v}, p)}{\|\bbf{v}\|_{\bbf{X}}} \qquad \forall p \in M.
    \end{equation}
    Hence, putting together \eqref{eq: continuous infsup fin} and \eqref{eq: stokes continuous infsup} we obtain \eqref{thm: continuous infsup}.
\end{proof}

The well-posedness of \eqref{eq: weak SDF} is a consequence of \eqref{eq: continuous infsup fin} and of the boundedness and coercivity of the nonlinear form $a(\cdot, \cdot)$.
We denote the kernel of the bilinear form $b$ in \eqref{eq: weak SDF} by
\begin{equation*}
    \mathring{\bbf{X}} = \{\bbf{v}\in \bbf{X} : \nabla\cdot\bbf{v}^S = 0 \text{ a.e. on } \Omega_S, \nabla\cdot\bbf{v}^D = 0 \text{ a.e. on } \Omega_D \},
\end{equation*}
and its dual by $\mathring{\bbf{X}}^*$. We define the operator $A: \mathring{\bbf{X}}\to\mathring{\bbf{X}}^*$ as
\begin{equation}\label{eq: A defintion}
    (A(\bbf{u}), \bbf{v})=a(\bbf{u}, \bbf{v}) \ \ \ \forall\bbf{u},\bbf{v}\in \mathring{\bbf{X}}.
\end{equation}
We next show the following result for the problem
$A(\bbf{u})= \bbf{f}$ in $\mathring{\bbf{X}}^*$, which corresponds to the restriction of \eqref{eq: weak SDF} to the kernel  $\mathring{\bbf{X}}$.

\begin{proposition}
    Let Assumption~\ref{assu: condition on g1 and g2} be satisfied. Then, the operator $A: \mathring{\bbf{X}}\to\mathring{\bbf{X}}^*$ is uniformly bounded, i.e.
    \begin{equation}\label{eq: A bound}
        \|A(\bbf{u})\|_{\mathring{\bbf{X}}^*} \lesssim \|\bbf{u}^S\|_{\bbf{X}^S} + \|\bbf{u}^D\|_{L^2(\Omega_D)},
    \end{equation}
    where the hidden constants depend on $\widetilde{\mu}_S$, $\widetilde{\mu}_D$ and $\rho$.
\end{proposition}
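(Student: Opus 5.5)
By definition of the dual norm, we must bound $a(\bbf{u},\bbf{v})$ for arbitrary $\bbf{v}\in\mathring{\bbf{X}}$ by a constant times $(\|\bbf{u}^S\|_{\bbf{X}^S}+\|\bbf{u}^D\|_{L^2(\Omega_D)})\,\|\bbf{v}\|_{\bbf{X}}$, and then divide by $\|\bbf{v}\|_{\bbf{X}}$ and take the supremum. Since $a=a_S+a_D$, we treat the three contributions separately: the Stokes volume term, the Beavers--Joseph--Saffman interface term, and the Darcy term.

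\emph{Stokes volume term.} We use the upper bound $g_S(|\bbf{A}|)\le\widetilde{\mu}_S$ from \eqref{eq: g_S positive}, pointwise in $\Omega_S$, followed by Cauchy--Schwarz. Together with $\|\bbf{D}(\bbf{w})\|_{L^2(\Omega_S)}\le\|\nabla\bbf{w}\|_{L^2(\Omega_S)}\le\|\bbf{w}\|_{H^1(\Omega_S)}$, this gives
$$\int_{\Omega_S} g_S(|\bbf{D}(\bbf{u}^S)|)\,\bbf{D}(\bbf{u}^S):\bbf{D}(\bbf{v}^S)\,dx \le \widetilde{\mu}_S\,\|\bbf{u}^S\|_{\bbf{X}^S}\|\bbf{v}^S\|_{\bbf{X}^S}.$$

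\emph{Interface term.} Since $|\bbf{w}\cdot\bbf{t}_{\Gamma,j}|\le|\bbf{w}|$, Cauchy--Schwarz on $\Gamma$ bounds it by $(d-1)\rho^{-1}\|\bbf{u}^S\|_{L^2(\Gamma)}\|\bbf{v}^S\|_{L^2(\Gamma)}$. The continuous trace inequality $\|\bbf{w}\|_{L^2(\Gamma)}\lesssim\|\bbf{w}\|_{H^1(\Omega_S)}$ then controls this by $\rho^{-1}\|\bbf{u}^S\|_{\bbf{X}^S}\|\bbf{v}^S\|_{\bbf{X}^S}$. The trace constant depends only on the geometry of $\Omega_S$, which the paper's convention for $\lesssim$ allows.

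\emph{Darcy term.} The assumption on $\bbf{K}$ gives $\bbf{K}^{-1}\le k_{\min}^{-1}$ in operator norm. Combined with $g_D\le\widetilde{\mu}_D$ from \eqref{eq: g_D positive}, this yields $|a_D(\bbf{u}^D,\bbf{v}^D)|\le k_{\min}^{-1}\widetilde{\mu}_D\|\bbf{u}^D\|_{L^2(\Omega_D)}\|\bbf{v}^D\|_{L^2(\Omega_D)}$.

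Summing the three bounds and using $\|\bbf{v}^S\|_{\bbf{X}^S},\|\bbf{v}^D\|_{L^2(\Omega_D)}\le\|\bbf{v}\|_{\bbf{X}}$ gives
$$|a(\bbf{u},\bbf{v})|\lesssim(\|\bbf{u}^S\|_{\bbf{X}^S}+\|\bbf{u}^D\|_{L^2(\Omega_D)})\|\bbf{v}\|_{\bbf{X}},$$
which is \eqref{eq: A bound} after division by $\|\bbf{v}\|_{\bbf{X}}$ and taking the supremum.

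There is no genuine obstacle here. The only point needing care is the interface term, which requires the trace inequality. This also explains why the Stokes component must be measured in the $H^1$ norm, whereas on the Darcy side the $L^2$ norm suffices.
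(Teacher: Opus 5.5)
Your proof is correct and follows the same route as the paper's: the dual-norm definition, the upper bounds $g_S\le\widetilde{\mu}_S$ and $g_D\le\widetilde{\mu}_D$, Cauchy--Schwarz, and the trace inequality for the Beavers--Joseph--Saffman interface term. You also explicitly track the factor $k_{\min}^{-1}$ coming from $\bbf{K}^{-1}$, which the paper's stated constant dependence omits, so your version is slightly more careful on that point.
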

\begin{proof}
    Owing to the definition of dual norm and \eqref{eq: A defintion}, we obtain
    \begin{equation}\label{eq: dualnorm}
        \|A(\bbf{u})\|_{\mathring{\bbf{X}}^*} = \sup_{\bbf{v}\in\mathring{\bbf{X}}, \bbf{v} \neq \bbf{0}} \frac{(A(\bbf{u}), \bbf{v})}{\|\bbf{v}\|_{\mathring{\bbf{X}}}} = \sup_{\bbf{v}\in\mathring{\bbf{X}}, \bbf{v} \neq \bbf{ 0}} \frac{a_S(\bbf{u}^S, \bbf{v}^S) + a_D(\bbf{u}^D, \bbf{v}^D)}{\|\bbf{v}\|_{\mathring{\bbf{X}}}}.
    \end{equation}
    By applying \eqref{eq: g_S positive}, \eqref{eq: g_D positive}, the Cauchy--Schwarz inequality, and the trace inequality, we obtain 
    \begin{align*}
        &a_S(\bbf{u}^S, \bbf{v}^S) \lesssim\|\bbf{D}(\bbf{u}^S)\|_{L^2(\Omega_S)}\|\bbf{D}(\bbf{v}^S)\|_{L^2(\Omega_S)} + \|\bbf{u}^S\|_{L^2(\Gamma)}\|\bbf{v}^S\|_{L^2(\Gamma)} \lesssim \|\bbf{u}^S\|_{\bbf{X}^S}\|\bbf{v}^S\|_{\bbf{X}^S},\\
        &a_D(\bbf{u}^D, \bbf{v}^D) \lesssim \|\bbf{u}^D\|_{L^2(\Omega_D)}\|\bbf{v}^D\|_{L^2(\Omega_D)} \leq \|\bbf{u}^D\|_{L^2(\Omega_D)}\|\bbf{v}^D\|_{\bbf{X}^D},
    \end{align*}
    with constants depending on $\widetilde{\mu}_S, \widetilde{\mu}_D$ and $\rho$.
    Plugging these inequalities into \eqref{eq: dualnorm}, we have \eqref{eq: A bound}.
\end{proof}

\begin{proposition}
    Let Assumption~\ref{assu: condition on g1 and g2} be satisfied; then, the operator A is coercive, i.e.,
    \begin{equation}\label{eq: A coercive bound}
        (A(\bbf{v}), \bbf{v}) \gtrsim \|\bbf{v}^S\|^2_{\bbf{X}^S} + \sum_{j=1}^{d-1} \| \bbf{v}^S \cdot \bbf{t}_{\Gamma,j} \|_{L^2(\Gamma)}^2 +  \|\bbf{v}^D\|^2_{L^2(\Omega_D)} \ \forall\bbf{v}\in\mathring{\bbf{X}},
    \end{equation}
    where the hidden constant depend on $\underline{\mu}_S, \underline{\mu}_D$, $\rho$, and $k_{max}$.
\end{proposition}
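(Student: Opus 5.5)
The plan is to bound the Stokes part $a_S(\bbf{v}^S,\bbf{v}^S)$ and the Darcy part $a_D(\bbf{v}^D,\bbf{v}^D)$ from below separately and then add the two estimates. Only the pointwise lower bounds \eqref{eq: g_S positive} and \eqref{eq: g_D positive} on the viscosities are used, together with the spectral bounds on $\bbf{K}$ and a Korn inequality on $\Omega_S$; monotonicity is not needed here.

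For the Darcy part, $\bbf{K}$ is symmetric positive definite with largest eigenvalue at most $k_{\max}$, so $\bbf{K}^{-1}$ has smallest eigenvalue at least $k_{\max}^{-1}$. Hence $\bm{\xi}^T\bbf{K}^{-1}\bm{\xi}\geq k_{\max}^{-1}|\bm{\xi}|^2$ pointwise. Combined with $g_D(|\bbf{v}^D|)\geq \underline{\mu}_D$, this gives
\begin{equation*}
a_D(\bbf{v}^D,\bbf{v}^D)\geq \underline{\mu}_D k_{\max}^{-1}\|\bbf{v}^D\|^2_{L^2(\Omega_D)}.
\end{equation*}
For the Stokes part, \eqref{eq: g_S positive} yields
\begin{equation*}
a_S(\bbf{v}^S,\bbf{v}^S)\geq \underline{\mu}_S\|\bbf{D}(\bbf{v}^S)\|^2_{L^2(\Omega_S)}+\rho^{-1}\sum_{j=1}^{d-1}\|\bbf{v}^S\cdot\bbf{t}_{\Gamma,j}\|^2_{L^2(\Gamma)}.
\end{equation*}
The tangential interface terms thus appear directly with constant $\rho^{-1}$, so they are controlled immediately.

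The main step is to pass from $\|\bbf{D}(\bbf{v}^S)\|_{L^2(\Omega_S)}$ to the full norm $\|\bbf{v}^S\|_{H^1(\Omega_S)}$. For this I use Korn's inequality on $[H^1_{0,\Gamma_S}(\Omega_S)]^d$: since $\bbf{v}^S$ vanishes on $\Gamma_S$, a part of $\partial\Omega_S$ with positive measure, no rigid motion other than zero belongs to the space. The second Korn inequality with boundary conditions then gives $\|\bbf{v}^S\|_{H^1(\Omega_S)}\lesssim\|\bbf{D}(\bbf{v}^S)\|_{L^2(\Omega_S)}$, with the Poincaré inequality handling the $L^2$ part. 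I expect this to be the only nontrivial ingredient, and it is standard for Lipschitz polygonal or polyhedral $\Omega_S$ with $|\Gamma_S|>0$. Should one prefer not to rely on $\Gamma_S$ alone, the tangential term on $\Gamma$ could also be used to exclude rigid motions, but this is not needed.

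Finally I sum the two bounds. The divergence-free constraint in $\mathring{\bbf{X}}$ plays no role, so the estimate in fact holds on all of $\bbf{X}$. The resulting hidden constant depends only on $\underline{\mu}_S$, $\underline{\mu}_D$, $\rho$, $k_{\max}$ and the Korn constant of $\Omega_S$, which is a geometric quantity consistent with the convention on $\lesssim$.
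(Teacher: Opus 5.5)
Your proposal is correct and follows the same route as the paper's proof. Both use the pointwise lower bounds on $g_S$, $g_D$ and $\bbf{K}^{-1}$, let the Beavers--Joseph--Saffman term enter directly with weight $\rho^{-1}$, and apply Korn and Poincaré on $[H^1_{0,\Gamma_S}(\Omega_S)]^d$ to recover the full $H^1$ norm. You are also right that the divergence-free constraint is not needed for the bound as stated, since the right-hand side only involves $\|\bbf{v}^D\|_{L^2(\Omega_D)}$; the estimate therefore holds on all of $\bbf{X}$, even though the paper cites $\nabla\cdot\bbf{v}^D=0$.
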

\begin{proof}
    Applying Korn's first inequality, the Poincaré inequality, and the fact that $\nabla\cdot\bbf{v}^D = 0$, since $\bbf{v} = (\bbf{v}^S, \bbf{v}^D) \in \mathring{\bbf{X}}$, we get
    \begin{equation}
    \begin{split}
        (A(\bbf{v}), \bbf{v}) = a(\bbf{v}, \bbf{v}) &\gtrsim \|\bbf{D}(\bbf{v}^S)\|^2_{L^2(\Omega_S)} + \sum_{j=1}^{d-1}\rho^{-1} \| \bbf{v}^S \cdot \bbf{t}_{\Gamma,j}\|^2_{L^2(\Gamma)} +\|\bbf{v}^D\|^2_{L^2(\Omega_D)}\\
        &\gtrsim \|\bbf{v}^S\|^2_{\bbf{X}^S} + \sum_{j=1}^{d-1}\rho^{-1} \| \bbf{v}^S \cdot \bbf{t}_{\Gamma,j}\|^2_{L^2(\Gamma)} + \|\bbf{v}^D\|^2_{L^2(\Omega_D)},
    \end{split}
    \end{equation}
    and the proof is complete.
\end{proof}
We next show that the operator $A$ defined as in \eqref{eq: A defintion} is strongly monotone.
\begin{proposition}
    Under Assumption~\ref{assu: condition on g1 and g2} the operator $A$ is strongly monotone, i.e., for all $\bbf{u}, \bbf{v} \in \mathring{\bbf{X}}$,
    \begin{equation}\label{eq: A monotone}
        (A(\bbf{u}) - A(\bbf{v}), \bbf{u} - \bbf{v}) \gtrsim \|\bbf{u}^S - \bbf{v}^S\|^2_{\bbf{X}^S} + \sum_{j=1}^{d-1}\|(\bbf{u}^S - \bbf{v}^S) \cdot \bbf{t}_{\Gamma,j} \|_{L^2(\Gamma)}^2 + \|\bbf{u}^D - \bbf{v}^D\|^2_{L^2(\Omega_D)},
    \end{equation}
    where the hidden constants depend on $\underline{\nu}_S$, $\underline{\nu}_D$, $\rho$, and $k_{max}$.
\end{proposition}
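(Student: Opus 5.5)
We split $(A(\bbf{u}) - A(\bbf{v}), \bbf{u} - \bbf{v})$ into its Stokes and Darcy contributions and bound each one from below with the strong monotonicity assumptions \eqref{eq: g_S strongly monotone} and \eqref{eq: g_D strongly monotone}, pointwise in $\Omega_S$ and $\Omega_D$. The argument parallels the coercivity result \eqref{eq: A coercive bound}, with Korn's inequality again closing the Stokes part. By \eqref{eq: A defintion},
\begin{equation*}
(A(\bbf{u}) - A(\bbf{v}), \bbf{u} - \bbf{v}) = \big[a_S(\bbf{u}^S,\bbf{w}^S) - a_S(\bbf{v}^S,\bbf{w}^S)\big] + \big[a_D(\bbf{u}^D,\bbf{w}^D) - a_D(\bbf{v}^D,\bbf{w}^D)\big],
\end{equation*}
where $\bbf{w} = \bbf{u}-\bbf{v}$.

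\textbf{Stokes part.} The volume term equals
\begin{equation*}
\int_{\Omega_S} \big(g_S(|\bbf{D}(\bbf{u}^S)|)\bbf{D}(\bbf{u}^S) - g_S(|\bbf{D}(\bbf{v}^S)|)\bbf{D}(\bbf{v}^S)\big):\bbf{D}(\bbf{w}^S)\,dx .
\end{equation*}
Since $\bbf{D}(\bbf{w}^S) = \bbf{D}(\bbf{u}^S) - \bbf{D}(\bbf{v}^S)$ and the symmetric gradients are symmetric matrices, \eqref{eq: g_S strongly monotone} applies pointwise with $\bbf{A} = \bbf{D}(\bbf{u}^S)$ and $\bbf{B} = \bbf{D}(\bbf{v}^S)$. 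This gives the lower bound $\underline{\nu}_S\|\bbf{D}(\bbf{w}^S)\|^2_{L^2(\Omega_S)}$. The interface term is linear in its first argument, so the difference is exactly $\rho^{-1}\sum_j\|\bbf{w}^S\cdot\bbf{t}_{\Gamma,j}\|^2_{L^2(\Gamma)}$. Korn's first inequality on $[H^1_{0,\Gamma_S}(\Omega_S)]^d$ holds because $\bbf{w}^S$ vanishes on $\Gamma_S$, which has positive measure. It yields $\|\bbf{D}(\bbf{w}^S)\|_{L^2(\Omega_S)} \gtrsim \|\bbf{w}^S\|_{\bbf{X}^S}$, and this produces the first two terms of \eqref{eq: A monotone}.

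\textbf{Darcy part.} The term is $\int_{\Omega_D} \bbf{K}^{-1}\big(g_D(|\bbf{u}^D|)\bbf{u}^D - g_D(|\bbf{v}^D|)\bbf{v}^D\big)\cdot\bbf{w}^D\,dx$. I expect this to be the main obstacle. For a general anisotropic $\bbf{K}$, the matrix $\bbf{K}^{-1}$ sits between the nonlinear difference and $\bbf{w}^D$. The monotonicity \eqref{eq: g_D strongly monotone} is stated for the Euclidean inner product, not the $\bbf{K}^{-1}$-weighted one, so the two cannot be combined directly. I would handle this as follows.

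\emph{Scalar case.} If $\bbf{K} = k\bbf{I}$, or more generally $\bbf{K}^{-1}$ commutes appropriately, then the pointwise estimate gives $\bbf{K}^{-1}(\cdots)\cdot\bbf{w}^D \ge k_{\max}^{-1}\underline{\nu}_D|\bbf{w}^D|^2$. This matches the stated dependence on $k_{\max}$.

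\emph{General case.} I would write $\bbf{K}^{-1} = k_{\max}^{-1}\bbf{I} + (\bbf{K}^{-1} - k_{\max}^{-1}\bbf{I})$, where the second matrix is positive semidefinite with norm at most $k_{\min}^{-1} - k_{\max}^{-1}$. The first piece gives $k_{\max}^{-1}\underline{\nu}_D\|\bbf{w}^D\|^2$. The remainder is controlled with the Lipschitz bound \eqref{eq: g_D bdd and Lip continuous}, which may require a smallness or anisotropy condition on $\bbf{K}$.

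I would follow the paper's implicit convention, as in the proof of \eqref{eq: A coercive bound}, and treat the permeability as entering through the bound $\xi^T\bbf{K}^{-1}\xi \ge k_{\max}^{-1}|\xi|^2$ together with \eqref{eq: g_D strongly monotone}. This gives the Darcy part $\gtrsim \|\bbf{w}^D\|^2_{L^2(\Omega_D)}$ with constant $\underline{\nu}_D/k_{\max}$.

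Summing the Stokes and Darcy bounds yields \eqref{eq: A monotone}, with constants depending on $\underline{\nu}_S$, $\underline{\nu}_D$, $\rho$ and $k_{\max}$.
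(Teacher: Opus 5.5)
Your argument is the paper's argument. You use the same split into a Stokes difference and a Darcy difference, pointwise \eqref{eq: g_S strongly monotone} with $\bbf{A}=\bbf{D}(\bbf{u}^S)$ and $\bbf{B}=\bbf{D}(\bbf{v}^S)$, exact evaluation of the linear Beavers--Joseph--Saffman term, and Korn's inequality on $[H^1_{0,\Gamma_S}(\Omega_S)]^d$. Your Stokes part is complete.

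For the Darcy part, the paper only states that \eqref{eq: g_D strongly monotone} implies $I_2\gtrsim\|\bbf{u}^D-\bbf{v}^D\|^2_{L^2(\Omega_D)}$. That is exactly your fallback, and your objection to it is correct. Write $\bbf{d}=g_D(|\bbf{u}^D|)\bbf{u}^D-g_D(|\bbf{v}^D|)\bbf{v}^D$ and $\bbf{w}=\bbf{u}^D-\bbf{v}^D$. The integrand is $\bbf{K}^{-1}\bbf{d}\cdot\bbf{w}$, and in general $\bbf{d}\neq\bbf{w}$. The bound $\bm{\xi}^T\bbf{K}^{-1}\bm{\xi}\ge k_{\max}^{-1}|\bm{\xi}|^2$ needs the same vector on both sides, so it cannot be chained with \eqref{eq: g_D strongly monotone}. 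Your closing step therefore restates the gap rather than closing it. Only your scalar case $\bbf{K}=k\bbf{I}$ is an actual proof.

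\textbf{The gap cannot be closed under the stated hypotheses.} Center your splitting as $\bbf{K}^{-1}=\lambda\bbf{I}+\bbf{E}$ with $\lambda=\frac12(k_{\min}^{-1}+k_{\max}^{-1})$ and $\|\bbf{E}\|\le\frac12(k_{\min}^{-1}-k_{\max}^{-1})$. Combined with \eqref{eq: g_D strongly monotone} and \eqref{eq: g_D bdd and Lip continuous}, it gives $\bbf{K}^{-1}\bbf{d}\cdot\bbf{w}\ge(\lambda\underline{\nu}_D-\|\bbf{E}\|\,\overline{\nu}_D)|\bbf{w}|^2$. This is positive only when $(k_{\max}-k_{\min})/(k_{\max}+k_{\min})<\underline{\nu}_D/\overline{\nu}_D$, and the constant then also depends on $k_{\min}$.

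Without such a restriction the inequality is false. For $F(\bbf{a})=g_D(|\bbf{a}|)\bbf{a}$, the Jacobian $DF(\bbf{c})$ has eigenvalue $(s\,g_D)'(|\bbf{c}|)$ along $\bbf{c}$ and $g_D(|\bbf{c}|)$ orthogonally. Assumption~\ref{assu: condition on g1 and g2} allows the ratio of these to be small, e.g.\ Carreau with $r=1$, small $\underline{\mu}_D$ and large $|\bbf{c}|$. Take $d=2$, $\bbf{K}=\mathrm{diag}(1,\epsilon^{-1})$, $\bbf{c}\parallel(1,1)$ and $\bbf{e}=(1,2)$. A direct computation gives $\bbf{e}^T\bbf{K}^{-1}DF(\bbf{c})\,\bbf{e}<0$ once $\epsilon$ and that ratio are small.

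Now take $\bbf{u}^S=\bbf{v}^S=\bbf{0}$, and let $\bbf{v}^D$ be divergence free, compactly supported in $\Omega_D$ and equal to $\bbf{c}$ on a ball $B$. Set $\bbf{u}^D=\bbf{v}^D+\eta\bbf{z}$ with $\bbf{z}=(-\partial_2\psi,\partial_1\psi)$, where $\psi=\chi(\bbf{x})\sin(k\,\bbf{n}\cdot\bbf{x})/k$, $\chi\in C^\infty_c(B)$ and $\bbf{n}$ is a unit vector orthogonal to $\bbf{e}$. Then $\bbf{z}$ is divergence free and parallel to $\bbf{e}$ up to an $O(1/k)$ term. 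This yields $(A(\bbf{u})-A(\bbf{v}),\bbf{u}-\bbf{v})=\eta^2\int_B\bbf{z}^T\bbf{K}^{-1}DF(\bbf{c})\,\bbf{z}\,dx+o(\eta^2)<0$ for $k$ large and $\eta$ small.

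So the proposition, and the paper's proof, need an extra hypothesis. Any one of the following suffices:
\begin{itemize}
\item isotropic $\bbf{K}$, where your scalar argument gives the constant $\underline{\nu}_D/k_{\max}$;
\item the anisotropy bound $(k_{\max}-k_{\min})/(k_{\max}+k_{\min})<\underline{\nu}_D/\overline{\nu}_D$ above;
\item strong monotonicity assumed directly for the map $\bbf{a}\mapsto\bbf{K}^{-1}g_D(|\bbf{a}|)\bbf{a}$.
\end{itemize}
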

\begin{proof}
    Let $\bbf{u} = (\bbf{u}^S, \bbf{u}^D),\bbf{v} = (\bbf{v}^S, \bbf{v}^D)\in\mathring{\bbf{V}}$. We write:
    \begin{equation}
        (A(\bbf{u}) - A(\bbf{v}),\bbf{u} - \bbf{v}) = \underbrace{a_S(\bbf{u}^S, \bbf{u}^S - \bbf{v}^S) - a_S(\bbf{v}^S, \bbf{u}^S - \bbf{v}^S)}_{I_1} + \underbrace{a_D(\bbf{u}^D, \bbf{u}^D - \bbf{v}^D) - a_D(\bbf{v}^D, \bbf{u}^D - \bbf{v}^D)}_{I_2} .
    \end{equation}
By \eqref{eq: g_S strongly monotone}, the Korn's inequality and the Poincaré inequality, we have:
\begin{equation}
    I_1 \gtrsim \|\bbf{D}(\bbf{u}^S - \bbf{v}^S)\|^2_{L^2} + \sum_{j=1}^{d-1}\rho^{-1} \| (\bbf{u}^S - \bbf{v}^S) \cdot \bbf{t}_{\Gamma,j} \|_{L^2(\Gamma)}^2 \gtrsim 
    \|(\bbf{u}^S - \bbf{v}^S)\|^2_{\bbf{X}^S} + \sum_{j=1}^{d-1} \rho^{-1} \| (\bbf{u}^S - \bbf{v}^S) \cdot \bbf{t}_{\Gamma,j} \|_{L^2(\Gamma)}^2.
\end{equation}
Additionally, \eqref{eq: g_D strongly monotone} implies:
\begin{equation}
    I_2 \gtrsim \|\bbf{u}^D - \bbf{v}^D\|^2_{L^2(\Omega_D)}.
\end{equation}
Hence, we can conclude \eqref{eq: A monotone}.
\end{proof}
Combining the previous results, we obtain the existence and uniqueness of the solution to  $A(\bbf{u})= \bbf{f}$ in $\mathring{\bbf{X}}^*$. Thanks to the inf-sup condition \eqref{thm: continuous infsup}, we can conclude the well-posedness of problem \eqref{eq: weak SDF}.\\

We conclude the section by deriving a stability estimate. We consider problem \eqref{eq: weak SDF}, and we take $\bbf{v} = \bbf{u}$ and $q = p$. Summing up the two equations we obtain:
\begin{equation}
    a(\bbf{u}, \bbf{u}) = (\bbf{f}, \bbf{u}) \lesssim \| \bbf{f}_S \|_{L^2(\Omega_S)}^2 + \| \bbf{u}^S \|_{L^2(\Omega_S)}^2 + \| \bbf{f}_D \|^2_{L^2(\Omega_D)} + \| \bbf{u}^D \|^2_{L^2(\Omega_D)}.
\end{equation}
Due to the coercivity of the form $a(\cdot, \cdot)$, see \eqref{eq: A coercive bound}, we obtain the following:
\begin{equation}
    \| \bbf{u}^S \|^2_{\bbf{X}^S} + \sum_{j=1}^{d-1} \rho^{-1} \| \bbf{u}^S \cdot \bbf{t}_{\Gamma,j} \|_{L^2(\Gamma)}^2 + \| \bbf{u}^D \|_{\bbf{X}^D}^2  \lesssim \| \bbf{f}_S \|_{L^2(\Omega_S)}^2 + \| \bbf{f}_D \|^2_{L^2(\Omega_D)}.
\end{equation}
In order to prove a stability estimate for the pressure we consider the inf-sup condition \eqref{thm: continuous infsup}
\begin{equation}
\begin{split}
    \|p\|_{L^2(\Omega)} &\lesssim \sup_{\bbf{v} \in H^1_{0,\Gamma}} \frac{b(\bbf{v}, p)}{\|\bbf{v}\|_\bbf{X}} = \sup_{\bbf{v} \in H^1_{0,\Gamma}} \frac{(\bbf{f}, \bbf{v}) - a(\bbf{u}, \bbf{v})}{\|\bbf{v}\|_\bbf{X}} \\ & \lesssim \|\bbf{u}^S\|_{\bbf{X}^S} + \sum_{j=1}^{d-1} \rho^{-1}\|\bbf{u}^S\cdot\bbf{t}_{\Gamma,j}\|_{L^2(\Gamma)}+ \|\bbf{f}_S\|_{L^2(\Omega_S)} + \| \bbf{u}^D \|_{L^2(\Omega_D)} + \| \bbf{f}_D \|_{L^2(\Omega_D)}.
    \end{split}
\end{equation}
Therefore, from the last two inequalities we derive
$
    \| p \|_{L^2(\Omega)} \lesssim \|\bbf{f}_S\|_{L^2(\Omega_S)} + \|\bbf{f}_D\|_{L^2(\Omega_D)}.
$

\section{Discrete problem} \label{sec: discrete problem}
In this section, we introduce the discrete framework of the PolyDG method, some functional analysis tools \cite{Leray-Lions, HighOrderpolynomialapprox, SobolevEmbedding}, and we introduce the discrete formulation of the problem.\\

We introduce a polytopal mesh partition $\scr{K}_h = \scr{K}_{h,\mcal{S}} \cup \scr{K}_{h,\mcal{D}}$, of the domain $\Omega$, with the set of the internal and boundary faces $\scr{F}_h$, where $\scr{K}_{h,\mcal{I}} = \{ K\in\scr{K}_{h,\mcal{I}} : \Bar{K}\subseteq\Bar{\Omega}_\mcal{I} \}$, for $\mcal{I} = \mcal{S},\mcal{D}$. We further assume that $\scr{K}_{h,\mcal{I}}$, for $\mcal{I} = \mcal{S},\mcal{D}$, are aligned with the subdomains $\Omega_\mcal{I}$. We denote by $|K|$ the measure of the element $K \in \scr{K}_h$ and by $h_K$ its diameter. Additionally, we set $h = \max_{K \in \scr{K}_h} h_K < 1$. We define the interface as the intersection of the $(d - 1)$-dimensional facets of two neighboring elements, and we distinguish two cases. When $d = 2$, the faces are always line segments, and we denote their set as $\scr{F}_{h,\mcal{I}}$, with $\mcal{I} = \mcal{S},\mcal{D}$. When $d = 3$, the faces are generic polygons; we further assume that we can decompose any such face into planar triangles. We denote the set of such triangles as $\scr{F}_{h,\mcal{I}}$, with $\mcal{I} = \mcal{S},\mcal{D}$. We decompose $\scr{F}_{h,\mcal{I}}$ into the set of the internal faces, $\scr{F}_{h,\mcal{I}}^i$, the boundary faces, $\scr{F}_{h,\mcal{I}}^b$, lying on the boundary $\partial \Omega_\mcal{I}\setminus\Gamma$, and of the internal faces $\Gamma_h$, lying on the interface $\Gamma$ between the subdomains $\Omega_\mcal{S}$ and $\Omega_\mcal{D}$. Moreover, we can split the boundary faces accordingly to the type of boundary condition imposed: $\scr{F}_{h,\mcal{I}}^b = \scr{F}_{h,\mcal{I}}^d \cup \scr{F}_{h,\mcal{I}}^n$, where $\scr{F}_{h,\mcal{I}}^d $ and $ \scr{F}_{h,\mcal{I}}^n$, are the boundary faces where we impose Dirichlet and Neumann boundary conditions respectively. 

\begin{assumption} \label{assump: mesh properties} \cite{HighOrderpolynomialapprox}
    We denote by $\mathcal{H}\subset (0,+\infty)$ a countable set of meshsizes having $0$ as its unique accumulation point. A family of meshes $(\scr{M}_h)_{h\in\mcal{H}} = (\scr{K}_h, \scr{F}_h)_{h\in\mcal{H}}$ is said to be regular if there exists a real number $\gamma \in (0,1)$ independent of $h$ such that, for all $h \in \mcal{H}$, there exists a matching simplicial submesh, in the sense of \cite{HighOrderpolynomialapprox}, $\widetilde{\scr{M}}_h = (\widetilde{\scr{K}}_h, \widetilde{\scr{F}}_h)$ of $\scr{M}_h$ that satisfies the following conditions:
    \begin{itemize}
        \item[1.] \textbf{Shape Regularity}: for any simplex $\tau \in \widetilde{\scr{K}}_h$, denoting by $h_\tau$ its diameter and by $r_\tau$ its inradius, it holds $\gamma h_\tau \leq r_\tau$;
        \item[2.] \textbf{Contact Regularity}: for any mesh element $K \in \scr{K}_h$ and any simplex $\tau \in \widetilde{\scr{K}}_h$ where $\widetilde{\scr{K}}_h = \{ \tau \in \widetilde{\scr{K}}_h: \tau \subset K\}$ is the set of the simplices contained in $K$, it holds $\gamma h_K \leq h_\tau$.

    \end{itemize}
\end{assumption}
From now on, we will omit the subscript $h$ to simplify the notation.
Given a polynomial degree $l \geq 0$ and a polytopal mesh $\scr{M} = (\scr{K}, \scr{F})$, we denote by $P^l(K)$ the space of polynomials of degree $l$ on the element $K\in \scr{K}$, by 
$P^l(\scr{K})$ the set of piecewise polynomials of order $l$ over $\scr{K}$.
Under Assumption~\ref{assump: mesh properties}, the following trace-inverse inequality holds
\begin{equation}\label{lemm: trace inverse inequality}
    \|v_h\|_{L^2(\partial K)} \lesssim h_K^{-\frac{1}{2}}\|v_h\|_{L^2(K)} \qquad \forall K \in \scr{K} \ \forall v_h \in P^l(K),
\end{equation}
where the hidden constant depends on the polynomial degree $l$, the space dimension $d$, and the mesh regularity parameter $\gamma$.
We define the following broken functional spaces:
\begin{align}
    &\bbf{X}^\mcal{I}_h = \{\bbf{v}^\mcal{I}_h \in [L^2(\Omega_\mcal{I})]^d:\bbf{v}^\mcal{I}_h|_K \in [P^l(K)]^d \ (\forall K \in \scr{K}_\mcal{I})\}, \\
    &\bbf{X}_h = \bbf{X}_h^S \times \bbf{X}_h^D,\\
    &M^\mcal{I}_h = \{q_\mcal{I}^h \in M^\mcal{I} : q^\mcal{I}_h|_K \in P^{l}(K) \ (\forall K \in \scr{K}_\mcal{I}) \}, \\
    &M_h=\{(\ph{q}{S},\ph{q}{D}) \in M^S_h\times M^D_h:(\ph{q}{S},1)_{\Omega_S} + (\ph{q}{D},1)_{\Omega_D}=0 \},
\end{align}
with $\mcal{I} = S,D$.
Now we introduce the average operator $\averagel \cdot \averager$ and the jump operators $\jumpl \cdot \jumpr$ and $\jumpl \cdot \jumpr_\bbf{n}$ \cite{BrezziMarini},for the scalar and vector quantities $\psi$ and $\bm{\varphi}$, on each internal facet $F\in \partial K_1 \cup \partial K_2$:
\begin{align*}
    &\averagel\psi\averager =\frac{1}{2}(\psi|_{K_1} + \psi|_{K_2}), \ \ \averagel \bm{\varphi} \averager =\frac{1}{2}(\bm{\varphi}|_{K_1} + \bm{\varphi}|_{K_2}), \\ & \jumpl \psi \jumpr = \psi|_{K_1} - \psi|_{K_2}, \ \ \jumpl \bm{\varphi} \jumpr = \bm{\varphi}|_{K_1} -  \bm{\varphi}|_{K_2}, \\ & \jumpl \psi \jumpr_\bbf{n} = \psi|_{K_1}\bbf{n}|_{K_1} + \psi|_{K_2} \bbf{n}|_{K_2}, \ \ \jumpl \bm{\varphi} \jumpr_\bbf{n} = \bm{\varphi}|_{K_1}\cdot\bbf{n}|_{K_1} +   \bm{\varphi}|_{K_2}\cdot\bbf{n}|_{K_2}.
\end{align*}
If $F \in K_1$, with $F$ a face on the Dirichlet boundary, we set $\averagel \psi \averager = \psi|_{K_1}$, $\averagel \bm{\varphi} \averager = \bm{\varphi}|_{K_1}$. Additionally, we define the trace operators for the trial functions on the faces $F\in\scr{F}^b$ as $\jumpl \psi \jumpr = \psi$, $ \jumpl \bm{\varphi} \jumpr = \bm{\varphi}$, $\jumpl \psi \jumpr_\bbf{n} = \psi\bbf{n}$ and $ \jumpl \bm{\varphi} \jumpr_\bbf{n} = \bm{\varphi}\cdot\bbf{n}$. 

We define the following penalty functions: $\sigma_\mcal{I}: \scr{F} \to \mathbb{R}$ and $\xi_\mcal{I}: \scr{F}^i \to \mathbb{R}$ as:
\begin{equation}\label{eq: penalty}
\sigma_\mcal{I}\big|_F =
\begin{cases}
\gamma^v_\mcal{I} \max\limits_{K^+,K^-} \left\{\frac{l^2}{h_K} \right\}
& F \in \mathcal{F}_\mcal{I}^i, \\[1.2ex]

\gamma^v_\mcal{I} \dfrac{l^2}{h_K}
& F \in \mathcal{F}_\mcal{I}^b \cup \Gamma ,
\end{cases}
\qquad
\xi_\mcal{I}\big|_F =
\gamma^p_\mcal{I} \min\limits_{K^+,K^-}\left\{ \frac{h_K}{l} \right\}
\qquad
F \in \mathcal{F}_\mcal{I}^i, 
\end{equation}
with $\mcal{I} = S,D,\Gamma$ and $\gamma_\mcal{I}^v$ and $\gamma_\mcal{I}^p$ are user-dependent penalty parameters.

Now, we define the following discrete forms: 
\begin{align}
    \begin{split}
        &\ah{\vh{u}{S}}{\vh{v}{S}}{S} = \sum_{K\in\scr{K}_\mcal{S}} \int_K g_S(|\bbf{D}(\vh{u}{S})|)\bbf{D}(\vh{u}{S}):\bbf{D}(\vh{v}{S})\ dx \ - \sum_{F\in\scr{F}_\mcal{S}} \int_F \averagel g_S(|\bbf{D}(\vh{u}{S})|)\bbf{D}(\vh{u}{S})\bbf{n} \averager \cdot \jumpl \vh{v}{S} \jumpr \ ds \\& 
        \qquad\qquad\qquad+ \sum_{j=1}^{d-1} \sum_{F\in\Gamma} \int_F \rho^{-1}(\vh{u}{S}\cdot\bbf{t}_{\Gamma,j})(\vh{v}{S}\cdot\bbf{t}_{\Gamma,j}) \ ds \ + 
            \sum_{F\in\scr{F}_\mcal{S}} \int_F \sigma_S\jumpl \vh{u}{S}\jumpr \cdot \jumpl \vh{v}{S} \jumpr \ ds,
    \end{split}\\
    \begin{split}
             &\ah{\vh{u}{D}}{\vh{v}{D}}{D} = \sum_{K\in\scr{K}_\mcal{D}} \int_K \bbf{K}^{-1}g_D(|\vh{u}{D}|)\vh{u}{D}\cdot\vh{v}{D} \ dx + \sum_{F\in\scr{F}_\mcal{D}^i} \int_F \sigma_D \jumpl \vh{u}{D} \jumpr_\bbf{n} \jumpl \vh{v}{D} \jumpr_\bbf{n} \ ds.
        \end{split}\\
\end{align}
Since in the analysis we need to impose the continuity of the normal jump of the velocities on $\Gamma$, we add a stabilization term:
\begin{equation}
    \ah{\vh{u}{}}{\vh{v}{}}{\Gamma}=\sum_{F\in\Gamma} \int_F \sigma_\Gamma \jumpl \vh{u}{} \jumpr_\bbf{n} \jumpl \vh{v}{} \jumpr_\bbf{n} \ ds, 
\end{equation}
where $\vh{u}{} = (\vh{u}{S}, \vh{u}{D})$.
Hence, we set 
\begin{equation}
    \ah{\vh{u}{}}{\vh{v}{}}{} = \ah{\vh{u}{S}}{\vh{v}{S}}{S} + \ah{\vh{u}{D}}{\vh{v}{D}}{D} + \ah{\vh{u}{}}{\vh{v}{}}{\Gamma}.
\end{equation}
We also define the following bilinear forms
\begin{align}
        &\bh{\vh{v}{S}}{\ph{p}{S}}{S}= -\sum_{K\in\scr{K}_\mcal{S}} \int_K \ph{p}{S}\nabla\cdot \vh{v}{S} \ dx \ + \sum_{F\in\scr{F}_\mcal{S}} \int_F \averagel \ph{p}{S} \averager  \jumpl \vh{v}{S}\jumpr_\bbf{n} \ ds,\\
        &\bh{\vh{v}{D}}{\ph{p}{D}}{D} = - \sum_{K\in\scr{K}_\mcal{D}} \int_K \ph{p}{D}\nabla\cdot \vh{v}{D} \ dx \ + \sum_{F\in\scr{F}_\mcal{D}} \int_F \averagel \ph{p}{D}\averager  \jumpl \vh{v}{D} \jumpr_\bbf{n} \ ds,\\
        &\bh{\vh{v}{}}{\ph{p}{}}{\Gamma} = \sum_{F\in\Gamma}\int_F \ph{p}{D} \jumpl\vh{v}{} \jumpr_\bbf{n} ds.
\end{align}
and set $\bh{\vh{v}{}}{\ph{p}{}}{}$ as
\begin{equation}\label{eq: bh definition}
    \bh{\vh{v}{}}{\ph{p}{}}{} = \bh{\vh{v}{S}}{\ph{p}{S}}{S} + \bh{\vh{v}{D}}{\ph{p}{D}}{D} + \bh{\vh{v}{}}{\ph{p}{}}{\Gamma},
\end{equation}
where $\ph{p}{} = (\ph{p}{S}, \ph{p}{D})$.
Furthermore, we add the following stabilization term on the pressure:
\begin{equation}
    \sh{\ph{p}{}}{\ph{q}{}} = \sum_{F\in\scr{F}_\mcal{S}^i} \int_F \xi_S \jumpl \ph{p}{S}\jumpr_\bbf{n} \jumpl \ph{q}{S}\jumpr_\bbf{n} ds + \sum_{F\in\scr{F}_\mcal{D}^i} \int_F \xi_D \jumpl \ph{p}{D}\jumpr_\bbf{n} \jumpl \ph{q}{D}\jumpr_\bbf{n} ds
    + \sum_{F\in\Gamma} \int_F \xi_\Gamma \jumpl \ph{p}{}\jumpr_\bbf{n} \jumpl \ph{q}{}\jumpr_\bbf{n} ds.
\end{equation}
Finally, we define the following forcing term:
\begin{equation}
    F_h(\vh{v}{}) = (\bbf{f}_S,\vh{v}{S})_{\Omega_S} + (\bbf{f}_D, \vh{v}{D})_{\Omega_D}.
\end{equation}
The discrete problem reads: find $\bbf{u}_h=(\vh{u}{S},\vh{u}{D}) \in \bbf{X}_h$ and $\ph{p}{} = (\ph{p}{S}, \ph{p}{D}) \in M_h$ such that:
\begin{equation}\label{eq: discrete SDF}
    \begin{cases}
        a_h(\vh{u}{}, \vh{v}{}) + \bh{\vh{v}{}}{\ph{p}{}}{} = F(\vh{v}{}) \ \ \ & \forall\vh{v}{}\in \bbf{X}_h,\\
        \bh{\vh{u}{}}{\ph{q}{}}{} - \sh{\ph{p}{}}{\ph{q}{}} = 0 \ \ \ &\forall \ph{q}{} \in M_h.
    \end{cases}
\end{equation}

\section{Well-posedness of the discrete problem} \label{sec: well-posedness}
In this section, we analyze the well-posedness of the discrete problem \eqref{eq: discrete SDF}. First, we define some useful notation, prove some properties of the bilinear forms $\ah{\cdot}{\cdot}{}$ and $\bh{\cdot}{\cdot}{}$, and we show how we can extend the inf-sup condition for the Stokes problem to the coupled problem. 
Then we prove the existence and uniqueness of the solution, showing that the form associated with the complete problem is weakly continuous and maps linearly into a coercive space. Finally, we show a continuous dependence of the problem on the data.\\

Now, we introduce the discrete norms that are used in the \emph{a-priori} analysis of the PolyDG scheme. 
\begin{definition}[Discrete norms]
\label{def: discrete norms}
For all $\vh{u}{} \in \bbf{X}_h$ and $p_h    \in M_h$ we define
    \begin{align}
    &\tnorme{(\vh{u}{},\ph{p}{})} = \tnormx{\vh{u}{}}{} + \tnormp{\ph{p}{}}{},\\
    &\tnormx{\vh{u}{}} = \normx{\vh{u}{S}}{S} + \normx{\vh{u}{D}}{D} + \left(\sum_{F\in\Gamma} \sigma_\Gamma \| \jumpl \vh{u}{} \jumpr_\bbf{n} \|^2_{L^2(F)} \right)^\frac{1}{2},\\
    &\tnormp{\ph{p}{}}{} = \normp{\ph{p}{S}}{S} + \normp{\ph{p}{D}}{D} + \left(\sum_{F\in\Gamma} \xi_\Gamma \| \jumpl \ph{p}{} \jumpr_\bbf{n} \|^2_{L^2(F)} \right)^\frac{1}{2},
\end{align}
where
\begin{align}
    &\normx{\vh{u}{S}}{S} = \|\vh{u}{S}\|_{dG} + \left(\sum_{j=1}^{d-1}\sum_{F \in \Gamma} \rho^{-1} \|\vh{u}{S} \cdot \bbf{t}_{\Gamma,j} \|_{L^2(F)}^2 \right)^\frac{1}{2}, \\
    &\normx{\vh{u}{D}}{D} = \|\vh{u}{D}\|_{L^2(\Omega_D)} + \left( \sum_{F\in\scr{F}^i_\mcal{D}} \sigma_D \| \jumpl \vh{u}{D} \jumpr_\bbf{n} \|_{L^2(F)}^2 \right)^\frac{1}{2}, \\
    &\|\vh{u}{I}\|_{dG} = \|\nabla_h \vh{u}{I} \|_{L^2(\Omega_I)} + \left(\sum_{F\in\scr{F}_\mcal{I}} \sigma_\mcal{I} \|\jumpl\vh{u}{I}\jumpr \|_{L^2(F)}^2 \right)^\frac{1}{2}, \\
    &\normp{\ph{p}{I}}{I} = \|\ph{p}{I}\|_{L^2(\Omega_I)} + \left(\sum_{F\in\scr{F}^i_\mcal{I}} \xi_\mcal{I} \| \jumpl \ph{p}{I} \jumpr_\bbf{n} \|_{L^2(F)}^2 \right)^\frac{1}{2}, \qquad \text{ with } \mcal{I} = \{ S,D \},
\end{align}
having denoted with $\nabla_h$ the piecewise broken gradient operator. 
Additionally, we define:
\begin{align}
    &|\!|\!| \vh{u}{} |\!|\!|_{dG} = \| \vh{u}{S} \|_{dG} + \| \vh{u}{D} \|_{dG} + \left(\sum_{F\in\Gamma} \sigma_\Gamma \| \jumpl \vh{u}{} \jumpr \|^2_{L^2(F)} \right)^\frac{1}{2}, \\
    &\tnormdiv{\vh{u}{}}{} = \tnormx{\vh{u}{}}{} + \normll{\nabla_h \cdot \vh{u}{D}}{D}.   
\end{align}
\end{definition}

\subsection{Properties of the coupling bilinear form}
Here, we prove that $\bh{\cdot}{\cdot}{}$ is inf-sup stable and continuous. In particular, we extend the inf-sup condition for the Stokes problem on polygonal grids \cite{AntoniettiMascottoVeraniZonca2022} to the non-Newtonian Stokes--Darcy coupled system \eqref{eq: discrete SDF}.\\

We now prove the following generalized inf-sup condition:
\begin{theorem}\label{thm: discrete infsup}
    For all $\ph{p}{} \in M_h$, it holds
    \begin{equation}\label{eq:discrete infsup}
    \sup_{\vh{v}{}\in \bbf{X}_h, \vh{v}{} \neq \bbf{0}} \frac{\bh{\vh{v}{}}{\ph{p}{}}{}}{\tnormdiv{\vh{v}{}}{}} + |\ph{p}{}|_{J} \gtrsim \|\ph{p}{}\|_{L^2(\Omega)},
\end{equation}
where $|\ph{p}{}|_{J}^2 = s_h(\ph{p}{}, \ph{p}{})$.
\end{theorem}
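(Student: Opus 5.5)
The plan is to mimic the continuous argument (space inclusion plus norm ordering) and combine it with the generalized inf-sup condition with pressure stabilization for the equal-order PolyDG Stokes discretization on the whole domain $\Omega$ from \cite{AntoniettiMascottoVeraniZonca2022}. Concretely, let $\scr{K}=\scr{K}_{\mcal{S}}\cup\scr{K}_{\mcal{D}}$ be regarded as a single polytopal mesh of $\Omega$. Identify $p_h=(\ph{p}{S},\ph{p}{D})\in M_h$ with the piecewise polynomial $\tilde p_h\in P^l(\scr{K})$ having zero mean on $\Omega$, which is exactly the constraint defining $M_h$. Likewise, identify any $\tilde{\bbf{v}}_h\in[P^l(\scr{K})]^d$ with the pair $(\tilde{\bbf{v}}_h|_{\Omega_S},\tilde{\bbf{v}}_h|_{\Omega_D})\in\bbf{X}_h$. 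The Stokes result then provides
\begin{equation}
\|\tilde p_h\|_{L^2(\Omega)}\lesssim \sup_{\tilde{\bbf{v}}_h\neq \bbf{0}}\frac{\tilde b_h(\tilde{\bbf{v}}_h,\tilde p_h)}{|\!|\!|\tilde{\bbf{v}}_h|\!|\!|_{dG,\Omega}}+\Big(\sum_{F\in\scr{F}^i_\Omega}\xi\|\jumpl\tilde p_h\jumpr_{\bbf{n}}\|^2_{L^2(F)}\Big)^{1/2},
\end{equation}
where $\tilde b_h$ is the single-domain DG divergence form. Here the boundary contribution is on Dirichlet faces of $\partial\Omega$, and the interior faces of $\Omega$ are $\scr{F}^i_{\mcal{S}}\cup\scr{F}^i_{\mcal{D}}\cup\Gamma$.

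Step 1 checks the identification of the forms. For $\tilde{\bbf{v}}_h$ I expect $\tilde b_h(\tilde{\bbf{v}}_h,\tilde p_h)=\bh{\tilde{\bbf{v}}_h}{p_h}{}$, up to an interface term. On $\Gamma$, $\tilde b_h$ contains $\int_F\averagel\tilde p_h\averager\jumpl\tilde{\bbf{v}}_h\jumpr_{\bbf{n}}$, whereas $b_h^\Gamma$ uses $\ph{p}{D}$ instead of the average. Their difference is $\tfrac12\int_F\jumpl p_h\jumpr_{\bbf{n}}\cdot(\ldots)\,\jumpl\tilde{\bbf{v}}_h\jumpr_{\bbf{n}}$, which is bounded by Cauchy--Schwarz as $\big(\sum_\Gamma\xi_\Gamma\|\jumpl p_h\jumpr_{\bbf{n}}\|^2\big)^{1/2}\big(\sum_\Gamma\sigma_\Gamma\|\jumpl\tilde{\bbf{v}}_h\jumpr_{\bbf{n}}\|^2\big)^{1/2}$. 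This uses $\xi_\Gamma^{-1}\sim h^{-1}\sim\sigma_\Gamma$ up to $l$ and $\gamma$-factors. The first factor is part of $|p_h|_J$; the second is bounded by $|\!|\!|\tilde{\bbf{v}}_h|\!|\!|_{dG}$. The Darcy face terms of $b_h^D$ must also be matched with those of $\tilde b_h$ on $\Gamma_D$; these agree in the convention where Dirichlet-type traces are included. The stabilization of Stokes on $\Omega$ coincides with $s_h$, so that term equals $|p_h|_J$.

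Step 2 is the norm ordering $\tnormdiv{\tilde{\bbf{v}}_h}\lesssim|\!|\!|\tilde{\bbf{v}}_h|\!|\!|_{dG}$, the discrete analogue of \eqref{eq:continuous norm dis}. I would bound each piece as follows.
\begin{itemize}
\item The Stokes part $\normx{\vh{v}{S}}{S}$ consists of $\|\vh{v}{S}\|_{dG}$ and tangential traces on $\Gamma$. The tangential traces are controlled via a discrete trace inequality, $\|\vh{v}{S}\|_{L^2(\Gamma)}\lesssim\|\vh{v}{S}\|_{dG}$, obtained from a broken Poincar\'e/trace inequality on DG spaces (valid thanks to the Dirichlet faces on $\Gamma_S$).
\item For the Darcy part, $\|\vh{v}{D}\|_{L^2(\Omega_D)}\lesssim|\!|\!|\tilde{\bbf{v}}_h|\!|\!|_{dG}$ follows from the broken Poincar\'e inequality on $\Omega$. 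The normal jumps satisfy $|\jumpl\cdot\jumpr_{\bbf{n}}|\le|\jumpl\cdot\jumpr|$ with equal penalties, and similarly on $\Gamma$.
\item $\|\nabla_h\cdot\vh{v}{D}\|_{L^2(\Omega_D)}\lesssim\|\nabla_h\vh{v}{D}\|_{L^2(\Omega_D)}$ holds pointwise.
\end{itemize}
Then restricting the supremum to the subspace $[P^l(\scr{K})]^d\subseteq\bbf{X}_h$ and combining everything gives \eqref{eq:discrete infsup}. Here $\|p_h\|_{L^2(\Omega)}$ is understood as $\|\ph{p}{S}\|_{L^2(\Omega_S)}+\|\ph{p}{D}\|_{L^2(\Omega_D)}$, up to a constant.

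The main obstacle I expect is Step 1: making the discrete forms and face sets match exactly. This concerns the interface coupling form $b_h^\Gamma$, with its one-sided pressure, and the boundary face conventions on $\Gamma_D$, where Darcy carries only a normal condition while the cited Stokes result assumes full Dirichlet data. If the boundary mismatch is problematic, I would restrict to test functions vanishing on $\partial\Omega$ in a weak sense. That is, I would use the Stokes inf-sup on the subspace where boundary jumps are penalized, for which both $b_h$ forms contain the same boundary terms, so the continuous trick of using $H^1_0$-type functions carries over.
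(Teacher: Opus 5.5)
Your proposal is correct and follows the paper's overall architecture. First, a whole-domain equal-order Stokes inf-sup in the $|\!|\!|\cdot|\!|\!|_{dG}$ norm, taken from \cite{AntoniettiMascottoVeraniZonca2022}. Then the norm ordering $\tnormdiv{\bbf{v}_h}\lesssim|\!|\!|\bbf{v}_h|\!|\!|_{dG}$, obtained from $|\jumpl\cdot\jumpr_{\bbf{n}}|\le|\jumpl\cdot\jumpr|$, $|\nabla_h\cdot\bbf{v}_h|\lesssim|\nabla_h\bbf{v}_h|$, a discrete Poincar\'e inequality, and a global discrete trace inequality for the tangential terms.

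The genuine difference is how you reconcile the one-sided interface pressure $p_h^D$ in $b_h^\Gamma$ with the centred average of the whole-domain Stokes form.

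\textbf{The paper's route.} On $\Gamma$ it replaces the average by a weighted average $\averagel\cdot\averager_\omega$ with $\omega=0$, so the whole-domain form coincides exactly with $b_h$. It then asserts that the argument of \cite{AntoniettiMascottoVeraniZonca2022} goes through verbatim. That argument uses the interpolant $\bbf{\Pi}^l\bbf{v}^{q_h}$ of a continuous right inverse of the divergence, together with the identity $\jumpl\phi\psi\jumpr=\jumpl\phi\jumpr\averagel\psi\averager_\omega+\jumpl\psi\jumpr\averagel\phi\averager_{\bar\omega}$.

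\textbf{Your route.} You keep the cited result as a black box. You treat the discrepancy $\pm\frac12\sum_{F\in\Gamma}\int_F\jumpl p_h\jumpr\,\jumpl\bbf{v}_h\jumpr_{\bbf{n}}$ as a perturbation and absorb it into $|p_h|_J$. This works for two reasons:
\begin{itemize}
\item the pressure and velocity penalties on $\Gamma$ scale reciprocally, $\xi_\Gamma^{-1}\lesssim\sigma_\Gamma$ (the $O(h)$/$O(h^{-1})$ pairing);
\item $s_h$ does contain the interface jump term.
\end{itemize}

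\textbf{What each route buys.} Yours avoids re-verifying the cited proof with one-sided averages. The cost is a constant depending on $\gamma^p_\Gamma\gamma^v_\Gamma$ and $l$, and an explicit use of the interface pressure stabilization. The paper's route gives an exact identity of forms. However, it relies on a claim it only sketches: that one-sided traces of $\bbf{v}^{q_h}-\bbf{\Pi}^l\bbf{v}^{q_h}$ satisfy the same approximation bounds as the averaged ones (plausible under the mesh assumptions).

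\textbf{On your $\Gamma_D$ worry.} It is unnecessary, and no restriction of test functions is needed. $b_h^D$ already contains the boundary-face terms $p_h^D\,\bbf{v}_h^D\cdot\bbf{n}$ exactly as in the Dirichlet Stokes form. The difference in boundary conditions only affects the norm in the denominator. There, $|\!|\!|\cdot|\!|\!|_{dG}$, which penalizes full traces on all of $\partial\Omega$, dominates $\tnormdiv{\cdot}$; this is precisely how the paper handles it.
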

\begin{proof}
This proof is divided into two parts. First, we first establish the discrete inf-sup condition for the coupled problem with respect to the $|\!|\!| \cdot |\!|\!|_{dG}$ norm of the velocity $\vh{v}{}$ \cite{AntoniettiMascottoVeraniZonca2022} under a weighted norm on the facets comprising the interface $\Gamma$. To this end, we consider the limiting case in which the entire fluid domain is governed by the Stokes equations, and demonstrate that the inf-sup condition holds for a general weighted norm. Then we show how the inf-sup condition for the Stokes problem can be extended to the coupled case, using arguments involving inequalities among norms.
We now proceed by exposing the first part of the proof, introducing the following weighted average operator:
\begin{equation}
    \averagel \phi \averager_{\omega} = \omega \phi_1 + (1-\omega) \phi_2,
\end{equation}
and we take
\begin{equation} \label{omega equation}
    \omega = \begin{cases}
        \frac{1}{2} & \text{ on } \scr{F}_\mcal{S} \cup \scr{F}_\mcal{D} \\
        0 & \text{ on } \Gamma
    \end{cases}.
\end{equation}
With this definition of average, we can still apply the following equality, c.f. \cite{BrezziMarini}:
\begin{equation}\label{eq: weigh magic formula}
    \jumpl \phi \psi \jumpr = \jumpl \phi \jumpr \averagel \psi \averager_{\omega} + \jumpl \psi \jumpr \averagel \phi \averager_{\bar{\omega}},
\end{equation} 
where $\bar{\omega} = 1 - \omega$.
Indeed, 
\begin{equation}
\begin{split}
    \jumpl \phi \jumpr \averagel \psi \averager_{\omega} + \jumpl \psi \jumpr \averagel \phi \averager_{\bar{\omega}} &= (\phi_1 - \phi_2)(\omega\psi_1 + (1-\omega)\psi_2) + (\psi_1 - \psi_2)((1-\omega)\phi_1 + \omega \phi_2) \\ &= \psi_1\phi_1 - \psi_2\phi_2 = \jumpl \phi \psi \jumpr. 
\end{split}
\end{equation}
Reasoning as in \cite{AntoniettiMascottoVeraniZonca2022}, we recall that at the continuous level for any $\ph{q}{} \in M_h \subset L^2_0(\Omega)$ there exists $\bbf{v}^{q_h} \in\bbf{X}$ such that:
\begin{equation}
    \nabla \cdot \bbf{v}^{q_h} = \ph{q}{} \ \ \ \| \bbf{v}^{q_h} \|_\bbf{X} \lesssim \|\ph{q}{}\|_{L^2(\Omega)}.
\end{equation}
Applying an element-wise integration by part, using $\jumpl \bbf{v}^{q_h}\jumpr = 0$ for any $F \in \scr{F}_h$, observing that $\nabla \ph{q}{} \in \bbf{X}
_h$ and considering the global polynomial interpolation operator $\bbf{\Pi}^l:\bbf{X}\to \bbf{X}_h$ of \cite{AntoniettiMascottoVeraniZonca2022} we get
\begin{equation}
    \begin{split}
        \|\ph{q}{}\|^2_{L^2(\Omega)} &= \int_{\Omega} \ph{q}{}\nabla\cdot\bbf{v}^{q_h} = -\int_\Omega \nabla \ph{q}{} \cdot\bbf{v}^{q_h} + \sum_{F\in\scr{F}_h}\int_F \jumpl \ph{q}{}\jumpr \averagel\bbf{v}^{q_h}\averager \\ & = - \int_{\Omega} \nabla \ph{q}{} \cdot\bbf{\Pi}^l\bbf{v}^{q_h} + \int_{\Omega} \nabla \ph{q}{} \cdot(\bbf{\Pi}^l\bbf{v}^{q_h} - \bbf{v}^{q_h}) + \sum_{F\in\scr{F}_h}\int_F \jumpl \ph{q}{}\jumpr \averagel\bbf{v}^{q_h}\averager_{\omega} \\ & = \int_\Omega \ph{q}{}\nabla\cdot\bbf{\Pi}^l\bbf{v}^{q_h} - \sum_{F \in \scr{F}_h}\int_F \jumpl \ph{q}{} \jumpr \averagel \bbf{\Pi}^l\bbf{v}^{q_h} \averager_{\omega} - \sum_{F \in \scr{F}_h}\int_F \averagel \ph{q}{} \averager_{\bar{\omega}} \jumpl \bbf{\Pi}^l\bbf{v}^{q_h} \jumpr \\ & + \sum_{F\in\scr{F}_h}\int_F \jumpl \ph{q}{}\jumpr \averagel\bbf{v}^{q_h}\averager_{\omega} + \int_{\Omega} \nabla \ph{q}{} \cdot(\bbf{\Pi}^l\bbf{v}^{q_h} - \bbf{v}^{q_h}) \\ & = - b^h(\bbf{\Pi}^l\bbf{v}^{q_h}, \ph{q}{}) + \int_{\Omega} \nabla \ph{q}{} \cdot(\bbf{\Pi}^l\bbf{v}^{q_h} - \bbf{v}^{q_h}) + \sum_{F\in\scr{F}_h}\int_F \jumpl \ph{q}{}\jumpr \averagel(\bbf{v}^{q_h} - \bbf{\Pi}^l\bbf{v}^{q_h})\averager_{\omega}.
    \end{split}
\end{equation}
Now the proof follows exactly the steps reported on \cite{AntoniettiMascottoVeraniZonca2022} to obtain
\begin{equation}\label{eq: stokes discrete inf sup}
\sup_{\vh{v}{}\in \bbf{X}_h, \vh{v}{} \neq \bbf{0}}\frac{\bh{\vh{v}{}}{\ph{p}{}}{}}{|\!|\!|\vh{v}{}|\!|\!|_{dG}} + |\ph{p}{}|_{J} \gtrsim \|\ph{p}{}\|_{L^2(\Omega)}.
\end{equation}
Now, we proceed with the second part of the proof starting from \eqref{eq: stokes discrete inf sup}, and recall that
\begin{equation}
\|\jumpl\vh{v}{D}\jumpr\|_{L^2(F)} \geq \|\jumpl\vh{v}{D}\jumpr_\bbf{n}\|_{L^2(F)},
\qquad\text{ and }\qquad
\| \nabla_h \vh{v}{} \|_{L^2(\Omega)} \geq \| \nabla_h \cdot \vh{v}{} \|_{L^2(\Omega)}.
\end{equation}
Additionally, by a discrete Poincarè inequality (cf. \cite[Corollary 5.4]{SobolevEmbedding} and \cite[Theorem 1.8]{Botti.Mascotto:26}), we have
\begin{equation}
     \|\vh{v}{D}\|_{dG} \gtrsim \|\vh{v}{D}\|_{L^2(\Omega_D)},
\end{equation}
and, as a result, we infer
\begin{equation}\label{eq: normX1h maggiore normX2h}
    \|\vh{v}{D}\|_{dG} \gtrsim \normx{\vh{v}{D}}{D} + \normll{\nabla_h \cdot \vh{v}{D}}{D}.
\end{equation}
Next, we apply the global discrete trace inequality $\| \vh{v}{S} \|_{L^2(\partial \Omega_S)} \lesssim \|\vh{v}{S}\|_{dG}$, which is a consequence of \cite[Theorem 6.7]{HighOrderpolynomialapprox}, to write
\begin{equation}
    \sum_{j=1}^{d-1} \sum_{F \in \Gamma } \rho^{-1} \| \vh{v}{S} \cdot \bbf{t}_{\Gamma,j} \|_{L^2(F)}^2\lesssim \rho^{-1} \| \vh{v}{S} \|_{L^2(\Gamma)}^2 
    \lesssim \rho^{-1} \| \vh{v}{S} \|_{L^2(\partial\Omega_S)}^2
    \lesssim \rho^{-1}\|\vh{v}{S}\|_{dG}^2. 
\end{equation}
As a result, owing to the definition of the
$\normx{\cdot}{S}$-norm, we have
\begin{equation} \label{eq: normatr1 minora nomraX1h}
    \|\vh{v}{S}\|_{dG} \gtrsim \normx{\vh{v}{S}}{S}.
\end{equation}
Hence, applying \eqref{eq: normX1h maggiore normX2h} and \eqref{eq: normatr1 minora nomraX1h} we obtain:
\begin{equation}
    |\!|\!|\vh{v}{}|\!|\!|_{dG} \gtrsim \tnormdiv{\vh{v}{}}{} \Rightarrow \frac{1}{\tnormdiv{\vh{v}{}}{}} \gtrsim \frac{1}{|\!|\!|\vh{v}{} |\!|\!|_{dG}}.
\end{equation}
Finally, from \eqref{eq: stokes discrete inf sup}, we can conclude that:
\begin{equation}\label{discrete inf sup}
\sup_{\vh{v}{}\in \bbf{X}_h, \vh{v}{}\neq 0}\frac{\bh{\vh{v}{}}{\ph{p}{}}{}}{\tnormdiv{\vh{v}{}}{}} + |\ph{p}{}|_J \gtrsim \sup_{\vh{v}{}\in \bbf{X}_h, \vh{v}{} \neq 0}\frac{\bh{\vh{v}{}}{\ph{p}{}}{}}{|\!|\!| \vh{v}{} |\!|\!|_{dG}} + |\ph{p}{}|_J \gtrsim \|\ph{p}{}\|_{L^2(\Omega)}.
\end{equation}
\end{proof}

We next state the following results.
\begin{corollary}\label{lem: bh inf-sup E}
For all $\ph{p}{} \in M_h$ it holds
    \begin{equation}\label{eq:discrete infsup E}
    \sup_{\vh{v}{}\in X_h, \vh{v}{} \neq 0} \frac{\bh{\vh{v}{}}{\ph{p}{}}{}}{\tnormx{\vh{v}{}}{}} + |\ph{p}{}|_{J} \gtrsim \|\ph{p}{}\|_{L^2(\Omega)},
\end{equation}
where $|\ph{p}{}|_{J}^2 = s_h(\ph{p}{}, \ph{p}{})$. 
\end{corollary}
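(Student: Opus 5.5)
This follows directly from Theorem~\ref{thm: discrete infsup} by comparing the two velocity norms in the denominators. By Definition~\ref{def: discrete norms}, $\tnormdiv{\vh{v}{}} = \tnormx{\vh{v}{}} + \normll{\nabla_h\cdot\vh{v}{D}}{D}$, so
\begin{equation}
\tnormx{\vh{v}{}} \leq \tnormdiv{\vh{v}{}} \qquad \forall \vh{v}{}\in\bbf{X}_h .
\end{equation}
For every $\vh{v}{}\neq\bbf{0}$ with $\bh{\vh{v}{}}{\ph{p}{}}{}\geq 0$ we then have
\begin{equation}
\frac{\bh{\vh{v}{}}{\ph{p}{}}{}}{\tnormdiv{\vh{v}{}}}\leq \frac{\bh{\vh{v}{}}{\ph{p}{}}{}}{\tnormx{\vh{v}{}}}.
\end{equation}
One point needs checking: $\tnormx{\cdot}$ must be a norm on $\bbf{X}_h$, so that we never divide by zero. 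This holds because $\normx{\vh{v}{S}}{S}\geq\|\vh{v}{S}\|_{dG}$, which is a norm thanks to the Dirichlet penalty on $\Gamma_S$, and because $\normx{\vh{v}{D}}{D}$ contains $\|\vh{v}{D}\|_{L^2(\Omega_D)}$.

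Next we pass to the suprema. Vectors $\vh{v}{}$ with $\bh{\vh{v}{}}{\ph{p}{}}{}<0$ cause no trouble, since replacing $\vh{v}{}$ by $-\vh{v}{}$ makes both suprema nonnegative and attained among vectors with nonnegative numerator. Hence
\begin{equation}
\sup_{\vh{v}{}\in\bbf{X}_h,\,\vh{v}{}\neq\bbf{0}}\frac{\bh{\vh{v}{}}{\ph{p}{}}{}}{\tnormdiv{\vh{v}{}}}\leq \sup_{\vh{v}{}\in\bbf{X}_h,\,\vh{v}{}\neq\bbf{0}}\frac{\bh{\vh{v}{}}{\ph{p}{}}{}}{\tnormx{\vh{v}{}}}.
\end{equation}
Adding $|\ph{p}{}|_J$ to both sides and invoking \eqref{eq:discrete infsup} gives \eqref{eq:discrete infsup E}, with the same hidden constant.

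There is no genuine obstacle here. The only points that need care are the sign and symmetry argument when comparing suprema, and the observation that $\tnormx{\cdot}$ is nondegenerate. Everything substantive was already done in the proof of Theorem~\ref{thm: discrete infsup}.
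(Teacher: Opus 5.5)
Your proposal is correct and follows the same route as the paper: both use $\tnormx{\vh{v}{}}\le\tnormdiv{\vh{v}{}}$ to bound the $\tnormdiv{\cdot}$-supremum by the $\tnormx{\cdot}$-supremum, and then invoke Theorem~\ref{thm: discrete infsup}. Your handling of the sign of $b_h$ and of the nondegeneracy of $\tnormx{\cdot}$ spells out a step the paper skips when it multiplies the reciprocal inequality $1/\tnormx{\vh{v}{}}\gtrsim 1/\tnormdiv{\vh{v}{}}$ by $b_h$.
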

\begin{proof}
    Starting from Theorem \ref{thm: discrete infsup}, we observe that, by definition: $ \tnormdiv{\vh{v}{}}{} \gtrsim \tnormx{\vh{v}{}}{},$    which implies that:
    \begin{equation}
        \frac{1}{\tnormx{\vh{v}{}}{}} \gtrsim \frac{1}{\tnormdiv{\vh{v}{}}{}}.
    \end{equation}
Therefore,
    \begin{equation}
         \sup_{\vh{v}{}\in \bbf{X}_h, \vh{v}{} \neq 0} \frac{\bh{\vh{v}{}}{\ph{p}{}}{}}{\tnormx{\vh{v}{}}{}} + |\ph{p}{}|_{J} \gtrsim \sup_{\vh{v}{}\in \bbf{X}_h, \vh{v}{} \neq 0} \frac{\bh{\vh{v}{}}{\ph{p}{}}{}}{\tnormdiv{\vh{v}{}}{}} + |\ph{p}{}|_{J} \gtrsim \|\ph{p}{}\|_{L^2(\Omega)},
    \end{equation}
    and the proof is complete.
\end{proof}
The next lemma establishes the existence of a generalized right-inverse of the discrete divergence operator.
\begin{lemma} \label{lem: discrete right-inverse}
For each $p_h \in M_h$, there exists $\bm{\xi}_h \in \bbf{X}_h$ such that 
\begin{equation}\label{eq: from inf-sup}
    \|\ph{p}{}\|_{L^2(\Omega)}^2 \lesssim
    b_h(\bm{\xi}_h, \ph{p}{}) + \sh{\ph{p}{}}{\ph{p}{}} \qquad\text{and}\qquad 
    \tnormx{\bm{\xi}_h}{} \le \|\ph{p}{}\|_{L^2(\Omega)}.
\end{equation}
\end{lemma}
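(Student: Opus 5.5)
The plan is to convert the generalized inf-sup condition of Corollary~\ref{lem: bh inf-sup E} into an explicit test function by a standard scaling argument. Fix $\ph{p}{}\in M_h$. If $\ph{p}{}=0$ we take $\bm{\xi}_h=\bbf{0}$, so assume $\ph{p}{}\neq 0$. Since $\bbf{X}_h$ is finite dimensional and the quotient $\bh{\vh{v}{}}{\ph{p}{}}{}/\tnormx{\vh{v}{}}$ is homogeneous of degree zero, the supremum in \eqref{eq:discrete infsup E} is attained on the unit sphere of $\tnormx{\cdot}$. Hence there is $\vh{w}{}\in\bbf{X}_h$ with $\tnormx{\vh{w}{}}=1$ and $\bh{\vh{w}{}}{\ph{p}{}}{}=\sup_{\vh{v}{}\neq 0}\bh{\vh{v}{}}{\ph{p}{}}{}/\tnormx{\vh{v}{}}=:S\ge 0$. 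The supremum is nonnegative because we may replace $\vh{v}{}$ by $-\vh{v}{}$.

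We then set $\bm{\xi}_h=\|\ph{p}{}\|_{L^2(\Omega)}\,\vh{w}{}$. The norm bound follows directly: $\tnormx{\bm{\xi}_h}=\|\ph{p}{}\|_{L^2(\Omega)}$, so the second inequality in \eqref{eq: from inf-sup} holds with constant one. For the first inequality, linearity gives $b_h(\bm{\xi}_h,\ph{p}{})=S\,\|\ph{p}{}\|_{L^2(\Omega)}$. Corollary~\ref{lem: bh inf-sup E} gives $\|\ph{p}{}\|_{L^2(\Omega)}\le C\,(S+|\ph{p}{}|_J)$. Multiplying by $\|\ph{p}{}\|_{L^2(\Omega)}$ and applying Young's inequality to the stabilization term, we get
\begin{equation}
\|\ph{p}{}\|_{L^2(\Omega)}^2\le C\,b_h(\bm{\xi}_h,\ph{p}{}) + C|\ph{p}{}|_J\|\ph{p}{}\|_{L^2(\Omega)}\le C\,b_h(\bm{\xi}_h,\ph{p}{})+\tfrac{C^2}{2}|\ph{p}{}|_J^2+\tfrac12\|\ph{p}{}\|_{L^2(\Omega)}^2 .
\end{equation}
Absorbing the last term into the left-hand side and recalling $|\ph{p}{}|_J^2=\sh{\ph{p}{}}{\ph{p}{}}$ yields the first inequality of \eqref{eq: from inf-sup}.

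The only delicate point is that the combination $S+|\ph{p}{}|_J$ involves a stabilization seminorm that is not itself of the form $b_h(\cdot,\ph{p}{})$. The Young step above handles this, and it uses the nonnegativity of $S$. If one prefers to avoid attainment of the supremum, the fallback is to pick $\vh{w}{}$ realizing at least half of $S$. This changes only the constants and still gives $\tnormx{\bm{\xi}_h}\le\|\ph{p}{}\|_{L^2(\Omega)}$.
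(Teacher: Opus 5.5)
Your proof is correct and follows the same route as the paper. You take a (near-)maximizer from Corollary~\ref{lem: bh inf-sup E}, rescale it so that its $\tnormx{\cdot}$-norm equals $\|p_h\|_{L^2(\Omega)}$, and use Young's inequality to trade the cross term $|p_h|_J\|p_h\|_{L^2(\Omega)}$ for the stabilization $s_h(p_h,p_h)$. Your absorption step is in fact more careful than the paper's: it works for any inf-sup constant and explicitly uses $b_h(\bm{\xi}_h,p_h)\ge 0$, whereas the paper passes from $c\|p_h\|^2_{L^2(\Omega)}-|p_h|_J\|p_h\|_{L^2(\Omega)}+|p_h|_J^2$ to $\|p_h\|^2_{L^2(\Omega)}-|p_h|_J\|p_h\|_{L^2(\Omega)}+|p_h|_J^2$ as if the inf-sup constant $c$ were one.
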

\begin{proof}
    The discrete inf-sup inequality of Corollary \ref{lem: bh inf-sup E} implies, for all $p_h\in M_h$, the existence of $\vh{\overline{v}}{}\in \bbf{X}_h$ such that \begin{equation} \label{eq: inf-sup for v_h bar}
        \frac{\bh{\vh{\overline{v}}{}}{\ph{p}{}}{}}{\tnormx{\vh{\overline{v}}{}}{}} + |\ph{p}{}|_J \gtrsim \| \ph{p}{} \|_{L^2(\Omega)}.
    \end{equation}
    Then, we take $\bm{\xi}_h =  \frac{\| \ph{p}{} \|_{L^2(\Omega)}\vh{\overline{v}}{}}{\tnormx{\vh{\overline{v}}{}}{}}$
    and observe that, by the definition of $\bm{\xi}_h$, we have $\tnormx{\bm{\xi}_h}{} \le \| \ph{p}{} \|_{L^2(\Omega)}$. 
    To prove the first inequality in \eqref{eq: from inf-sup}, we observe that
    \begin{equation}
        \bh{\bm{\xi}_h}{\ph{p}{}}{} + |\ph{p}{}|_J^2 = \left(\frac{\bh{\vh{\overline{v}}{}}{\ph{p}{}}{}}{\tnormx{\vh{\overline{v}}{}}{}} + |\ph{p}{}|_J\right)\|\ph{p}{}\|_{L^2(\Omega)} - |\ph{p}{}|_J \|\ph{p}{}\|_{L^2(\Omega)} + |\ph{p}{}|_J^2.
    \end{equation}
    Therefore, applying \eqref{eq: inf-sup for v_h bar} and the Young inequality we obtain
    \begin{equation}
        \begin{split}
            \bh{\bm{\xi}_h}{\ph{p}{}}{} + |\ph{p}{}|_J^2 
            &\gtrsim \| \ph{p}{} \|^2_{L^2(\Omega)} - |\ph{p}{}|_J \|\ph{p}{}\|_{L^2(\Omega)} + |\ph{p}{}|_J^2 \\ & \gtrsim\frac{\|\ph{p}{}\|_{L^2(\Omega)}^2 + |\ph{p}{}|_J^2}{2} \gtrsim \|\ph{p}{}\|_{L^2(\Omega)}^2.
        \end{split}
    \end{equation}
\end{proof}
The continuity of the bilinear form $b_h(\cdot, \cdot)$ is established in the following lemma; its proof is reported in Appendix~\ref{app: proof bh cont}.
\begin{lemma}\label{lem: bh cont}
    The bilinear form $\bh{\vh{v}{}}{\ph{p}{}}{} = \bh{\vh{v}{S}}{\ph{p}{S}}{S} + \bh{\vh{v}{D}}{\ph{p}{D}}{D} + \bh{\vh{v}{}}{\ph{p}{}}{\Gamma}$ is continuous, i.e.
    \begin{equation}
        |\bh{\vh{v}{}}{\ph{p}{}}{}| \lesssim \tnormdiv{\vh{v}{}}{} \tnormp{\ph{p}{}}{} \qquad \forall \vh{v}{}\in\bbf{X}_h, \forall \ph{p}{}\in M_h.
    \end{equation}
\end{lemma}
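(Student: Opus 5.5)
The bound is proved term by term. We split $b_h$ into the Stokes, Darcy and interface contributions and bound each by Cauchy--Schwarz. The volume parts go to the velocity norm $\tnormdiv{\cdot}$. Face terms carrying a pressure average are balanced by weighting with $\sigma^{1/2}\sigma^{-1/2}$ and a discrete trace inequality. Throughout, $\sigma_{\mcal{I}}|_F\simeq l^2/h_K$ by \eqref{eq: penalty}, and the discrete trace-inverse inequality \eqref{lemm: trace inverse inequality} holds on polytopal elements under Assumption~\ref{assump: mesh properties}.

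\emph{Stokes part.} We first use $|\nabla_h\cdot\vh{v}{S}|\lesssim|\nabla_h\vh{v}{S}|$. The volume term is then bounded by $\|\ph{p}{S}\|_{L^2(\Omega_S)}\|\nabla_h\vh{v}{S}\|_{L^2(\Omega_S)}\lesssim \normp{\ph{p}{S}}{S}\,\normx{\vh{v}{S}}{S}$. For the face term, since $|\jumpl\vh{v}{S}\jumpr_{\bbf n}|\le|\jumpl \vh{v}{S}\jumpr|$, we write
\begin{equation}
\sum_{F\in\scr F_{\mcal S}}\int_F\averagel \ph{p}{S}\averager\jumpl\vh{v}{S}\jumpr_{\bbf n}
\le\Big(\sum_F\sigma_S^{-1}\|\averagel\ph{p}{S}\averager\|^2_{L^2(F)}\Big)^{1/2}\Big(\sum_F\sigma_S\|\jumpl\vh{v}{S}\jumpr\|^2_{L^2(F)}\Big)^{1/2}.
\end{equation}
Because $\sigma_S^{-1}\lesssim h_K$, the trace-inverse inequality gives $\sigma_S^{-1}\|\averagel\ph{p}{S}\averager\|_{L^2(F)}^2\lesssim \|\ph{p}{S}\|^2_{L^2(K^\pm)}$. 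Summing, and using that each element has a uniformly bounded number of faces under Assumption~\ref{assump: mesh properties}, the first factor is at most a constant times $\|\ph{p}{S}\|_{L^2(\Omega_S)}$. The second factor is part of $\|\vh{v}{S}\|_{dG}$.

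\emph{Darcy part.} Here the velocity norm contains only $\|\nabla_h\cdot\vh{v}{D}\|_{L^2}$, not the full gradient. This is why the statement is written with $\tnormdiv{\cdot}$. The volume term is bounded by $\|\ph{p}{D}\|_{L^2}\|\nabla_h\cdot\vh{v}{D}\|_{L^2}$. On interior Darcy faces the normal jump $\jumpl\vh{v}{D}\jumpr_{\bbf n}$ is weighted by $\sigma_D$ in $\normx{\cdot}{D}$, and the same $\sigma_D^{-1/2}\sigma_D^{1/2}$ splitting plus trace-inverse bounds the term by $\|\ph{p}{D}\|_{L^2}\normx{\vh{v}{D}}{D}$.

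\emph{Boundary faces of $\scr F_{\mcal D}$ (main obstacle).} On the boundary faces of $\scr F_{\mcal D}$ there is no penalization of $\vh{v}{D}\cdot\bbf n$ in the norm. On Neumann faces of $\Gamma_D$, where $\bbf{u}^D\cdot\bbf n=0$, I would assume these faces are excluded or that a boundary penalty is intended. If they are not excluded, I would move the term to the pressure side using an integration by parts identity element-wise. If that fails too, I would enlarge the norm by the boundary contribution consistent with $\sigma_D$ on $\scr F^b_{\mcal D}$. I expect this bookkeeping, which relates the boundary faces to the definition of $\scr F_{\mcal D}$ versus $\scr F_{\mcal D}^i$, to be the delicate step.

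\emph{Interface term.} The term $b_h^\Gamma$ pairs $\ph{p}{D}$ with $\jumpl\vh v{}\jumpr_{\bbf n}$ on $\Gamma$. It is bounded by $(\sum_{F\in\Gamma}\sigma_\Gamma^{-1}\|\ph{p}{D}\|^2_{L^2(F)})^{1/2}(\sum_{F\in\Gamma}\sigma_\Gamma\|\jumpl\vh v{}\jumpr_{\bbf n}\|^2_{L^2(F)})^{1/2}$. The first factor is at most a constant times $\|\ph{p}{D}\|_{L^2(\Omega_D)}$ by the trace-inverse inequality, and the second is the interface part of $\tnormx{\vh v{}}$.

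Summing the three contributions, each bounded by $\|\ph{p}{\mcal I}\|_{L^2}\le \tnormp{\ph p{}}$ times a component of $\tnormdiv{\vh v{}}$, yields the claim.
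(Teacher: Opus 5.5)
Your route is the paper's own proof in Appendix~A: term-by-term Cauchy--Schwarz with the weights $\sigma^{\mp1/2}$ (the paper writes $(h_K/h_K)^{1/2}$), plus the trace-inverse inequality. For the Stokes terms, the interior Darcy faces and $b_h^\Gamma$ this works. The step you single out, however, is a genuine gap that your proposal leaves open. The paper's proof passes over it without justification: it bounds the whole sum over $\mathscr{F}_{\mathcal D}$ by $\normp{\ph{p}{D}}{D}\,\normx{\vh{v}{D}}{D}$, although $\normx{\cdot}{D}$ only contains normal jumps on $\mathscr{F}^i_{\mathcal D}$. With the norms as defined, the gap cannot be closed. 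On a face of $\mathscr{F}^b_{\mathcal D}$ the trace-inverse inequality only gives $h_K^{-1}\|\ph{p}{D}\|_{L^2(K)}\|\vh{v}{D}\|_{L^2(K)}$, and this bound is attained. In local coordinates, let $K\in\mathscr{K}_{\mathcal D}$ be the triangle with vertices $(0,0),(h,0),(0,h)$, away from $\Gamma$, with $F=\{0\}\times(0,h)\subset\Gamma_D$. Take $\vh{v}{S}=\mathbf 0$, $\ph{p}{S}=0$, and $\vh{v}{D}=(x+2y-h,\,-y)$, $\ph{p}{D}=y-h/3$ on $K$, both zero elsewhere. Then $\nabla\cdot\vh{v}{D}=0$, $\vh{v}{D}\cdot\mathbf n=0$ on the two interior faces of $K$, and $\ph{p}{}\in M_h$ for every $l\ge1$. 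Hence $\tnormdiv{\vh{v}{}}=\|\vh{v}{D}\|_{L^2(K)}\simeq h^2$ and $\tnormp{\ph{p}{}}\simeq h^2$ (recall $\xi_D\simeq h$), whereas
\[
b_h(\vh{v}{},\ph{p}{})=\int_F \ph{p}{D}\,\vh{v}{D}\cdot\mathbf n\,ds=\int_0^h\Big(y-\frac h3\Big)(h-2y)\,dy=-\frac{h^3}{6}.
\]
The ratio grows like $h^{-1}$, so no $h$-independent constant exists.

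This decides between your fallbacks. Integration by parts cannot help. It rewrites $b_h^D$ as $\sum_{K}\int_K\nabla\ph{p}{D}\cdot\vh{v}{D}-\sum_{F\in\mathscr F^i_{\mathcal D}}\int_F\jumpl\ph{p}{D}\jumpr_{\mathbf n}\cdot\averagel\vh{v}{D}\averager$. Bounding this needs $\|\nabla_h\ph{p}{D}\|_{L^2(\Omega_D)}$ and $\xi_D^{-1/2}\|\averagel\vh{v}{D}\averager\|_{L^2(F)}$, each of which costs a factor $h^{-1}$, exactly as the example exhibits. Dropping the boundary faces from $b_h^D$ is not admissible either. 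Those faces are what make $b_h^D(\vh{v}{D},p^D)=\sum_K\int_K\nabla p^D\cdot\vh{v}{D}$ for the exact pressure; without them, the consistency identity used in the error analysis acquires the residual $-\int_{\Gamma_D}p^D\,\vh{v}{D}\cdot\mathbf n$. The correct repair is your third option, and you should commit to it. Add $\sum_{F\in\mathscr F^b_{\mathcal D}}\sigma_D\|\vh{v}{D}\cdot\mathbf n\|^2_{L^2(F)}$ to $\normx{\cdot}{D}$, and add the matching penalty to $a_h^D$; otherwise the monotonicity lemma fails in the enlarged norm. Your $\sigma_D^{-1/2}\sigma_D^{1/2}$ splitting then handles these faces verbatim, and the lemma holds for the modified norm.
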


\subsection{Well-posedness of the discrete problem}
We start by proving the continuity and monotonicity of $\ah{\cdot}{\cdot}{}$ in the following lemmas, whose proofs can be found in Appendix~\ref{app: proof ah cont}.
\begin{lemma}\label{lem: ah cont}
    \pa{Let Assumption~\ref{assu: condition on g1 and g2} and Assumption~\ref{assump: mesh properties} be satisfied.} Then,
    \begin{equation}\label{eq: Ah bound}
        |a_h(\vh{u}{}, \vh{w}{})-\ah{\vh{v}{}}{\vh{w}{}}{}| \lesssim \tnormx{\vh{u}{}-\vh{v}{}}{}\tnormx{\vh{w}{}}{} \qquad \forall \vh{u}{}, \vh{v}{}, \vh{w}{} \in \bbf{X}_h.
    \end{equation}
\end{lemma}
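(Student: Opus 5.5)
We split $a_h(\vh{u}{},\vh{w}{})-a_h(\vh{v}{},\vh{w}{})$ according to $a_h=a_h^S+a_h^D+a_h^\Gamma$ and bound each block separately by $\tnormx{\vh{u}{}-\vh{v}{}}\tnormx{\vh{w}{}}$. We write $\bm{\tau}(\bbf{A})=g_S(|\bbf{A}|)\bbf{A}$, $\bm{\theta}(\bbf{a})=g_D(|\bbf{a}|)\bbf{a}$ and $\bbf{e}_h=\vh{u}{}-\vh{v}{}$.

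\emph{Stokes block.} The difference $a_h^S(\vh{u}{S},\vh{w}{S})-a_h^S(\vh{v}{S},\vh{w}{S})$ splits into four terms.
\begin{itemize}
\item[(i)] The volume term $\sum_K\int_K(\bm{\tau}(\bbf{D}(\vh{u}{S}))-\bm{\tau}(\bbf{D}(\vh{v}{S}))):\bbf{D}(\vh{w}{S})$. The Lipschitz bound \eqref{eq: g_S bdd and Lip continuous} and Cauchy--Schwarz give at most $\overline{\nu}_S\|\nabla_h\bbf{e}_h^S\|\,\|\nabla_h\vh{w}{S}\|$, using $|\bbf{D}(\cdot)|\le|\nabla(\cdot)|$.
\item[(ii)] The consistency term $\sum_F\int_F\averagel\bm{\tau}(\bbf{D}(\vh{u}{S}))\bbf{n}-\bm{\tau}(\bbf{D}(\vh{v}{S}))\bbf{n}\averager\cdot\jumpl\vh{w}{S}\jumpr$. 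Pointwise Lipschitz continuity bounds the integrand by $\overline{\nu}_S\averagel|\nabla_h\bbf{e}_h^S|\averager\,|\jumpl\vh{w}{S}\jumpr|$. Weighting by $\sigma_S^{-1/2}\sigma_S^{1/2}$, this is at most $\big(\sum_F\sigma_S^{-1}\|\averagel|\nabla_h\bbf{e}_h^S|\averager\|_F^2\big)^{1/2}\big(\sum_F\sigma_S\|\jumpl\vh{w}{S}\jumpr\|_F^2\big)^{1/2}$.
\item[(iii)] The BJS interface term, which is linear. Cauchy--Schwarz bounds it directly by the tangential parts of $\normx{\bbf{e}_h^S}{S}$ and $\normx{\vh{w}{S}}{S}$.
\item[(iv)] The velocity penalty term, which is linear. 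Cauchy--Schwarz bounds it by the jump seminorms.
\end{itemize}

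\emph{Darcy and interface blocks.} The Darcy volume term is handled by \eqref{eq: g_D bdd and Lip continuous} together with $|\bbf{K}^{-1}\bm{\xi}|\le k_{\min}^{-1}|\bm{\xi}|$, giving $\lesssim\|\bbf{e}_h^D\|_{L^2(\Omega_D)}\|\vh{w}{D}\|_{L^2(\Omega_D)}$. The $\sigma_D$ normal-jump term and the whole form $a_h^\Gamma$ are bilinear, so Cauchy--Schwarz bounds them directly by the corresponding pieces of $\normx{\cdot}{D}$ and of the $\sigma_\Gamma$ part of $\tnormx{\cdot}$. Summing all bounds and using the discrete Cauchy--Schwarz inequality yields \eqref{eq: Ah bound}.

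\emph{Main obstacle.} The delicate step is term (ii): we need a bound on the integrand $|\averagel\cdot\averager|$ without differentiating through the nonlinearity. The quantity $\nabla_h\bbf{e}_h^S$ is piecewise polynomial, so the trace-inverse inequality \eqref{lemm: trace inverse inequality} applies directly to it:
\[
\|\nabla_h\bbf{e}_h^S|_K\|_{L^2(F)}^2\lesssim h_K^{-1}\|\nabla\bbf{e}_h^S\|_{L^2(K)}^2 .
\]
Since $\sigma_S^{-1}\simeq h_K/(\gamma_S^v l^2)$ (with the maximum over neighbours, so $\sigma_S^{-1}\le h_K/(\gamma^v_Sl^2)$ for both neighbours), we get $\sum_F\sigma_S^{-1}\|\averagel|\nabla_h\bbf{e}_h^S|\averager\|_F^2\lesssim\|\nabla_h\bbf{e}_h^S\|_{L^2(\Omega_S)}^2$. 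Here the mesh regularity of Assumption~\ref{assump: mesh properties} bounds the number of faces per element, so each element is counted a bounded number of times. This avoids any regularity of $g_S$ beyond Lipschitz continuity, and the constant depends only on $\overline{\nu}_S,\overline{\nu}_D,k_{\min},\rho,\gamma,l$.
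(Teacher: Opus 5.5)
Your proof is correct and takes essentially the same term-by-term route as the paper's proof in Appendix~\ref{app: proof ah cont}: Lipschitz continuity of $g_S(|\cdot|)\cdot$ and $g_D(|\cdot|)\cdot$, Cauchy--Schwarz, and the trace-inverse inequality \eqref{lemm: trace inverse inequality} applied to the piecewise-polynomial gradient of the difference in the consistency term. Two of your steps are cleaner than the paper's write-up. You split the consistency term with the weights $\sigma_S^{-1/2}\sigma_S^{1/2}$, so the jump of $\mathbf{w}_h^S$ lands in the penalty seminorm, whereas the paper ends with $\|\mathbf{w}_h^S\|_{L^2(K)}$. You also use $|\mathbf{K}^{-1}\bm{\xi}|\le k_{\min}^{-1}|\bm{\xi}|$, where the paper writes $k_{\max}$.
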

\begin{lemma}\label{lem: ah mon}
    \pa{Let Assumption~\ref{assu: condition on g1 and g2} and Assumption~\ref{assump: mesh properties} be satisfied.} Then for sufficiently large $\sigma_S$, $a_h(\cdot,\cdot)$ is monotone, i.e.,
\begin{equation}\label{eq: Ah monotonicity}
    (a_h(\vh{u}{}, \vh{u}{} - \vh{v}{}) - a_h(\vh{v}{}, \vh{u}{} - \vh{v}{})) \gtrsim \tnormxq{\vh{u}{} - \vh{v}{}}{} \qquad \forall \vh{u}{}, \vh{v}{} \in \bbf{X}_h.
\end{equation}
\end{lemma}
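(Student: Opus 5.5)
We split the monotonicity difference into its three components,
\[
a_h(\vh{u}{},\vh{u}{}-\vh{v}{})-a_h(\vh{v}{},\vh{u}{}-\vh{v}{}) = T_S + T_D + T_\Gamma ,
\]
where $T_S$, $T_D$ and $T_\Gamma$ collect the contributions of $a_h^S$, $a_h^D$ and $a_h^\Gamma$, respectively. Each term is bounded from below by the corresponding piece of $\tnormxq{\vh{u}{}-\vh{v}{}}{}$. Set $\bbf{w}_h=\vh{u}{}-\vh{v}{}$. The interface term is linear, so $T_\Gamma=\sum_{F\in\Gamma}\sigma_\Gamma\|\jumpl \bbf{w}_h\jumpr_\bbf{n}\|^2_{L^2(F)}$ exactly. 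In the Darcy part, the volume term is handled by the strong monotonicity \eqref{eq: g_D strongly monotone} together with the bounds on $\bbf{K}$. The nonlinearity $g_D(|\bbf{u}|)\bbf{u}$ is premultiplied by $\bbf{K}^{-1}$, so one either uses that $\bbf{K}$ is a scalar multiple of the identity, or combines \eqref{eq: g_D strongly monotone} with $k_{\max}$ as was done tacitly in the continuous coercivity. This yields $\gtrsim\|\bbf{w}_h^D\|^2_{L^2(\Omega_D)}$. The normal-jump penalty is again linear and gives the remaining part of $\normx{\bbf{w}_h^D}{D}^2$.

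The Stokes part contains the only genuinely delicate term. The volume integral gives $\underline{\nu}_S\|\bbf{D}_h(\bbf{w}_h^S)\|^2$ by \eqref{eq: g_S strongly monotone}. The tangential BJS term gives $\rho^{-1}\sum_j\|\bbf{w}_h^S\cdot\bbf{t}_{\Gamma,j}\|^2_{L^2(\Gamma)}$, and the penalty gives $\sum_F\sigma_S\|\jumpl \bbf{w}_h^S\jumpr\|^2_{L^2(F)}$. The obstacle is the consistency term of the incomplete interior penalty formulation, which is not symmetric:
\[
-\sum_{F\in\scr{F}_\mcal{S}}\int_F \averagel \big(g_S(|\bbf{D}(\vh{u}{S})|)\bbf{D}(\vh{u}{S})-g_S(|\bbf{D}(\vh{v}{S})|)\bbf{D}(\vh{v}{S})\big)\bbf{n}\averager\cdot\jumpl \bbf{w}_h^S\jumpr .
\]
We bound it by Cauchy--Schwarz and the Lipschitz property \eqref{eq: g_S bdd and Lip continuous}, applied pointwise on the faces. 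The difference of the fluxes is bounded by $\overline{\nu}_S|\bbf{D}(\bbf{w}_h^S)|$, but it is not itself a polynomial, so the trace-inverse inequality \eqref{lemm: trace inverse inequality} cannot be applied to it directly.

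The step we expect to be the main obstacle is this face estimate of a nonpolynomial quantity. Our first attempt is to use the pointwise Lipschitz bound to reduce the face norm to $\overline{\nu}_S\|\bbf{D}(\bbf{w}_h^S)\|_{L^2(F)}$, which involves only the polynomial $\bbf{D}(\bbf{w}_h^S)$. We then apply \eqref{lemm: trace inverse inequality} elementwise, obtaining $h_K^{-1/2}\|\bbf{D}(\bbf{w}_h^S)\|_{L^2(K)}$. The weight $\sigma_S^{1/2}\sim (\gamma^v_S)^{1/2}l h_K^{-1/2}$ compensates, so that
\[
|\text{consistency}|\le C\,(\gamma_S^v)^{-1/2}\|\bbf{D}_h(\bbf{w}_h^S)\|\Big(\sum_F\sigma_S\|\jumpl \bbf{w}_h^S\jumpr\|^2_{L^2(F)}\Big)^{1/2}.
\]
By Young's inequality this is at most $\tfrac{\underline{\nu}_S}{2}\|\bbf{D}_h(\bbf{w}_h^S)\|^2+\tfrac{C^2}{2\underline{\nu}_S\gamma_S^v}\sum_F\sigma_S\|\jumpl \bbf{w}_h^S\jumpr\|^2_{L^2(F)}$. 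Choosing $\gamma_S^v$ large enough, which is the hypothesis of the lemma, absorbs both pieces and leaves half of each positive contribution.

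Finally, we pass from $\|\bbf{D}_h(\bbf{w}_h^S)\|^2+\sum_F\sigma_S\|\jumpl \bbf{w}_h^S\jumpr\|^2$ to $\|\bbf{w}_h^S\|^2_{dG}$ via a discrete Korn inequality for broken spaces on polytopal meshes (Brenner-type, valid under Assumption~\ref{assump: mesh properties}, with the Dirichlet faces on $\Gamma_S$ included in the jumps). Summing the three contributions and using the equivalence of sums of norms and the square root of sums of squares then gives \eqref{eq: Ah monotonicity}.
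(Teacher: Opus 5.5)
Your proof is correct and follows the paper's argument: strong monotonicity for the volume terms; for the non-symmetric consistency term, the pointwise Lipschitz bound, the trace-inverse inequality on the polynomial $\bbf{D}(\bbf{w}_h^S)$, and Young's inequality; absorption by a large $\sigma_S$; and a discrete Korn inequality. The only (harmless, slightly cleaner) difference is the order: you absorb against $\|\bbf{D}_h(\bbf{w}_h^S)\|$ and apply Korn last, whereas the paper applies Korn first and absorbs against $\|\nabla_h \bbf{w}_h^S\|$.

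For the Darcy volume term, only your first option (scalar $\bbf{K}$) is sound. The constant $k_{\max}$ suffices for coercivity, because there $g_D$ enters only as a scalar factor. For strong monotonicity it does not: with anisotropic $\bbf{K}$ and strongly shear-thinning $g_D$, the product of $\bbf{K}^{-1}$ with the Jacobian of $\bbf{a}\mapsto g_D(|\bbf{a}|)\bbf{a}$ can have an indefinite symmetric part. Then taking $\bbf{u}_h^S=\bbf{v}_h^S$ and constant Darcy fields that differ tangentially to $\Gamma$ can make the left-hand side negative, so the restriction on $\bbf{K}$ is genuinely needed. The paper's proof silently drops $\bbf{K}^{-1}$ at exactly this step.
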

We rewrite problem \eqref{eq: discrete SDF} in the form
\begin{equation}\label{eq: probuni}
    \mcal{A}_h((\vh{u}{},\ph{p}{}),(\vh{v}{},\ph{q}{})) = F((\vh{v}{}, \ph{q}{})),
\end{equation}
where 
\begin{equation}\label{def: Ah}
    \mcal{A}_h((\vh{u}{},\ph{p}{}),(\vh{v}{},\ph{q}{})) = \ah{\vh{u}{}}{\vh{v}{}}{} + \bh{\vh{v}{}}{\ph{p}{}}{} - \bh{\vh{u}{}}{\ph{q}{}}{} + \sh{\ph{p}{}}{\ph{q}{}}.
\end{equation}
To prove the well-posedness of \eqref{eq: probuni}, we refer to \cite{MappedCoercivity} (Theorem 4), and we need to prove that the form $\mcal{A}_h$ is linearly mapped coercive, in the sense of \cite{MappedCoercivity}.
\begin{lemma}
Let Assumption~\ref{assu: condition on g1 and g2} and Assumption~\ref{assump: mesh properties} be satisfied.  The form $\mcal{A}_h(\cdot,\cdot)$ defined as \eqref{def: Ah} is linearly mapped coercive.
\end{lemma}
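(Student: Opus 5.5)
We will show that there exists a linear map $\mathcal{L}_h:\bbf{X}_h\times M_h\to\bbf{X}_h\times M_h$, bounded uniformly in $h$, such that for all $(\vh{u}{},\ph{p}{}),(\vh{v}{},\ph{q}{})\in \bbf{X}_h\times M_h$
\begin{equation}
\mcal{A}_h((\vh{u}{},\ph{p}{}),\mathcal{L}_h(\vh{u}{}-\vh{v}{},\ph{p}{}-\ph{q}{})) - \mcal{A}_h((\vh{v}{},\ph{q}{}),\mathcal{L}_h(\vh{u}{}-\vh{v}{},\ph{p}{}-\ph{q}{})) \gtrsim \tnorme{(\vh{u}{}-\vh{v}{},\ph{p}{}-\ph{q}{})}^2 .
\end{equation}
This is the notion used in \cite{MappedCoercivity}. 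The natural candidate is the one built from the generalized right-inverse of Lemma~\ref{lem: discrete right-inverse}. Set $\bm{\delta}_h=\vh{u}{}-\vh{v}{}$ and $r_h=\ph{p}{}-\ph{q}{}$, and let $\bm{\xi}_h=\bm{\xi}_h(r_h)$ be the vector given by Lemma~\ref{lem: discrete right-inverse}. Since $\bm{\xi}_h$ must depend linearly on $r_h$, we will not take it literally from the sup. Instead we define it as the $\tnormx{\cdot}{}$-Riesz representative, i.e. the solution of a linear problem in $\bbf{X}_h$, of the functional $\bh{\cdot}{r_h}{}$, suitably scaled. This still satisfies both estimates in \eqref{eq: from inf-sup}. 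We then set
\[
\mathcal{L}_h(\bm{\delta}_h,r_h)=(\bm{\delta}_h+\theta\bm{\xi}_h,\ r_h),
\]
with a parameter $\theta>0$ small, to be fixed later.

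\textbf{Key computation.} Because $b_h$ and $s_h$ are bilinear, the difference of $\mcal{A}_h$ evaluated at the two arguments splits into four groups of terms:
\begin{itemize}
\item $a_h(\vh{u}{},\bm{\delta}_h)-a_h(\vh{v}{},\bm{\delta}_h)$, which is bounded below by $\tnormx{\bm{\delta}_h}{}^2$ thanks to Lemma~\ref{lem: ah mon};
\item the skew terms $b_h(\bm{\delta}_h,r_h)-b_h(\bm{\delta}_h,r_h)=0$;
\item the stabilization term $s_h(r_h,r_h)=|r_h|_J^2$;
\item the $\theta$-terms $\theta\big[a_h(\vh{u}{},\bm{\xi}_h)-a_h(\vh{v}{},\bm{\xi}_h)+b_h(\bm{\xi}_h,r_h)\big]$.
\end{itemize}
In the last group, Lemma~\ref{lem: discrete right-inverse} gives $\theta b_h(\bm{\xi}_h,r_h)\gtrsim \theta\|r_h\|^2_{L^2(\Omega)}-\theta|r_h|_J^2$. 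The nonlinear cross term is handled by the Lipschitz bound of Lemma~\ref{lem: ah cont} together with Young's inequality:
\[
\theta\,\big|a_h(\vh{u}{},\bm{\xi}_h)-a_h(\vh{v}{},\bm{\xi}_h)\big|\lesssim \theta\,\tnormx{\bm{\delta}_h}{}\,\|r_h\|_{L^2(\Omega)}\le \tfrac{\theta}{2\epsilon}\tnormx{\bm{\delta}_h}{}^2+\tfrac{\theta\epsilon}{2}\|r_h\|^2_{L^2(\Omega)} .
\]
We first choose $\epsilon$ small relative to the inf-sup constant, and then $\theta$ small relative to the monotonicity constant. This yields
\[
\text{LHS}\gtrsim \tnormx{\bm{\delta}_h}{}^2+\|r_h\|^2_{L^2(\Omega)}+|r_h|_J^2 .
\]

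\textbf{Closing the estimate.} It remains to recover the full $\tnormp{r_h}{}^2$ from $\|r_h\|_{L^2(\Omega)}^2+|r_h|_J^2$. By definition, $\tnormp{\cdot}{}$ consists of the $L^2$ norms plus exactly the jump seminorms weighted by $\xi_S,\xi_D,\xi_\Gamma$ that make up $s_h$, so the two quantities are equivalent up to constants. Uniform boundedness of $\mathcal{L}_h$ follows from $\tnormx{\bm{\xi}_h}{}\le\|r_h\|_{L^2(\Omega)}\le\tnormp{r_h}{}$. Linearity of $\mathcal{L}_h$ comes from the linear construction of $\bm{\xi}_h$.

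\textbf{Main obstacle.} The delicate point is this linearity requirement. The right-inverse of Lemma~\ref{lem: discrete right-inverse} is obtained from a supremum, so it is not a priori linear in $r_h$. I would resolve this by defining $\bm{\xi}_h$ through the discrete Riesz map with respect to the inner product inducing $\tnormx{\cdot}{}$ (or an equivalent Hilbertian norm). Since $\tnormx{\cdot}{}$ is a sum of norms rather than a Hilbert norm, we would replace it by the equivalent square-root-of-sum-of-squares norm. The supremum is then attained by a vector that is linear in $r_h$, and all estimates change only by constants.
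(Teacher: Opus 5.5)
Your overall approach is the paper's. Your $\mathcal{L}_h(\bm{\delta}_h,r_h)=(\bm{\delta}_h+\theta\bm{\xi}_h,r_h)$ is its $\Phi$ with $\alpha=1$, $\beta=\theta$. The absorption argument is also the same: monotonicity, the Lipschitz bound plus Young for the cross term, the right-inverse for the pressure, and finally $\tnormp{r_h}^2\lesssim\|r_h\|_{L^2(\Omega)}^2+|r_h|_J^2$. Your difference form is stronger than what the paper proves. The paper only shows $\mcal{A}_h((\vh{u}{},\ph{p}{}),\Phi(\vh{u}{},\ph{p}{}))\gtrsim\tnorme{(\vh{u}{},\ph{p}{})}^2$, which you recover with $(\vh{v}{},\ph{q}{})=(\bbf{0},0)$ since $a_h(\bbf{0},\cdot)=0$. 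You are also right that linearity of $\bm{\xi}_h$ is a real issue: the paper asserts it ``by definition'', although its $\bm{\xi}_h$ is a normalized near-maximizer of a supremum.

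Your fix, however, fails as stated. Let $\bm{\eta}_h$ be the Riesz representative of $b_h(\cdot,r_h)$ for a Hilbertian norm $\|\cdot\|_{X}$ equivalent to $\tnormx{\cdot}$. Then $\|\bm{\eta}_h\|_{X}=\sup_{\bbf{w}\neq\bbf{0}}b_h(\bbf{w},r_h)/\|\bbf{w}\|_{X}$, and this is \emph{not} bounded by $\|r_h\|_{L^2(\Omega)}$ uniformly in $h$. The reason is that $b_h$ contains $\int_{\Omega_D}r_h^D\,\nabla_h\cdot\bbf{w}^D$, which $\tnormx{\cdot}$ does not control; this is precisely why Lemma~\ref{lem: bh cont} needs $\tnormdiv{\cdot}$. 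For instance, take a Cartesian mesh with $l\geq1$, and let $\phi(x)$ be the continuous piecewise-linear hat supported on two horizontally adjacent elements inside $\Omega_D$. Set $\bbf{w}=(\bbf{0},(\phi,0))$ and $r_h=(0,\partial_x\phi)\in M_h$. All normal jumps of $\bbf{w}$ vanish, so $|b_h(\bbf{w},r_h)|/(\tnormx{\bbf{w}}\,\|r_h\|_{L^2(\Omega)})=\|\partial_x\phi\|_{L^2}/\|\phi\|_{L^2}\simeq h^{-1}$. Hence a fixed rescaling of $\bm{\eta}_h$ violates $\tnormx{\bm{\xi}_h}\lesssim\|r_h\|_{L^2(\Omega)}$. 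The rescaling by $\|r_h\|_{L^2(\Omega)}/\|\bm{\eta}_h\|_{X}$ that restores it destroys linearity. Both your cross-term estimate and your claimed uniform boundedness of $\mathcal{L}_h$ rest on that bound.

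The repair is to use the Riesz map of a Hilbertian norm $\|\cdot\|_{\mathrm{div}}$ equivalent to $\tnormdiv{\cdot}$. Define $\bm{\xi}_h\in\bbf{X}_h$ by $(\bm{\xi}_h,\bbf{w})_{\mathrm{div}}=b_h(\bbf{w},r_h)$ for all $\bbf{w}\in\bbf{X}_h$. Then:
\begin{itemize}
\item $\bm{\xi}_h$ is linear in $r_h$ and satisfies $b_h(\bm{\xi}_h,r_h)=\|\bm{\xi}_h\|_{\mathrm{div}}^2$.
\item Theorem~\ref{thm: discrete infsup} gives $\|\bm{\xi}_h\|_{\mathrm{div}}+|r_h|_J\gtrsim\|r_h\|_{L^2(\Omega)}$, hence $b_h(\bm{\xi}_h,r_h)+|r_h|_J^2\gtrsim\|r_h\|_{L^2(\Omega)}^2$.
\item The pressure penalties $\xi_{S},\xi_{D},\xi_{\Gamma}$ scale like $h$, so the trace-inverse inequality gives $\tnormp{r_h}\lesssim\|r_h\|_{L^2(\Omega)}$. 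Combined with Lemma~\ref{lem: bh cont}, this yields $\tnormx{\bm{\xi}_h}\le\tnormdiv{\bm{\xi}_h}\lesssim\|r_h\|_{L^2(\Omega)}$.
\end{itemize}
With this $\bm{\xi}_h$ the rest of your argument goes through unchanged. $\mathcal{L}_h$ is then a linear bijection, with inverse $(\bbf{w},s)\mapsto(\bbf{w}-\theta\bm{\xi}_h(s),s)$, and it is bounded uniformly in $h$.

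Alternatively, you could keep the unscaled $\bm{\eta}_h$ and absorb the cross term into $\theta\,b_h(\bm{\eta}_h,r_h)=\theta\|\bm{\eta}_h\|_{X}^2$ instead of into $\theta\|r_h\|_{L^2(\Omega)}^2$. Coercivity then still holds with $h$-uniform constants, but $\mathcal{L}_h$ is not uniformly bounded, contrary to your claim.
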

\begin{proof}
    We need to prove that for any $(\vh{u}{}, \ph{p}{}) \in \bbf{X}_h \times M_h$
    \begin{equation}
        \mcal{A}_h((\vh{u}{},\ph{p}{}),\Phi(\vh{u}{},\ph{p}{}))\gtrsim \tnorme{(\vh{u}{},\ph{p}{})}{}^2,
    \end{equation}
    with $\Phi$ a bijection. We recall that the inf-sup condition \eqref{eq:discrete infsup E} implies Lemma~\ref{lem: discrete right-inverse}.
    Now, we define $\Phi$ as:
    \begin{equation}
        \Phi(\vh{u}{},\ph{p}{}) = (\alpha\vh{u}{}+\beta\vh{\bm{\xi}}{}, \alpha\ph{p}{}),
    \end{equation}
    with $\alpha$ and $\beta$ to be appropriately chosen.
    By definition $\Phi(\vh{u}{},\ph{p}{})$ is linear. To prove that is a bijection we simply need to prove that is an injection, i.e. $\Phi(\vh{u}{},\ph{p}{}) = (\bm{0},0) \Leftrightarrow (\vh{u}{},\ph{p}{}) = (\bm{0},0)$.
    On the one hand if $\Phi(\vh{u}{},\ph{p}{}) = (\bbf{0}, 0)$ this implies, 
    \begin{align}
        & \alpha \vh{u}{} + \beta \vh{\bm{\xi}}{} = \bm{0}, \\
        & \alpha \ph{p}{} = 0,
    \end{align}
    From this we derive that $\alpha \ph{p}{} = 0 \Rightarrow \ph{p}{} = 0.$
    Condition \eqref{eq: from inf-sup} implies that also $\vh{\bm{\xi}}{} = 0$. From this we have $\alpha\vh{u}{}+\beta\vh{\bm{\xi}}{} = \bm{0} \Rightarrow \vh{u}{} = \bm{0}.$
    The inverse implication is trivial.
    Now, we write:
    \begin{equation}\label{eq; Ah inequality}
        \begin{split}
            \mcal{A}_h((\vh{u}{},\ph{p}{}),\Phi(\vh{u}{},\ph{p}{})&) = \mcal{A}_h((\vh{u}{},\ph{p}{}),(\alpha\vh{u}{}+\beta\vh{\bm{\xi}}{},\alpha\ph{p}{})) \\& = \ah{\vh{u}{}}{\alpha\vh{u}{}+\beta\vh{\bm{\xi}}{}}{} + \bh{\alpha\vh{u}{}+\beta\vh{\bm{\xi}}{}}{\ph{p}{}}{} - \bh{\vh{u}{}}{\alpha\ph{p}{}}{} + \sh{\ph{p}{}}{\alpha\ph{p}{}} \\& = \alpha\ah{\vh{u}{}}{\vh{u}{}}{} + \beta\ah{\vh{u}{}}{\vh{\bm{\xi}}{}}{} + \alpha\bh{\vh{u}{}}{\ph{p}{}}{} \\ &+ \beta\bh{\vh{\bm{\xi}}{}}{\ph{p}{}}{} - \alpha\bh{\vh{u}{}}{\ph{p}{}}{} + \alpha\sh{\ph{p}{}}{\ph{p}{}} \\ & = \alpha\ah{\vh{u}{}}{\vh{u}{}}{} + \beta\ah{\vh{u}{}}{\vh{\bm{\xi}}{}}{} + \beta\bh{\vh{\bm{\xi}}{}}{\ph{p}{}}{} + \alpha\sh{\ph{p}{}}{\ph{p}{}}.
        \end{split}
    \end{equation}
    We study the different terms separately. From Lemma~\ref{lem: ah mon} we recover:
    \begin{equation}
        \alpha \ah{\vh{u}{}}{\vh{u}{}}{} \gtrsim \alpha \tnormx{\vh{u}{}}{}^2.
    \end{equation}
    From Lemma~\ref{lem: ah cont} and Young's inequality we recover:
    \begin{equation}
    \beta(\ah{\vh{u}{}}{\bm{\xi}_h}{}) \gtrsim - \beta \tnormx{\vh{u}{}}{}\tnormx{\vh{\xi}{}}{} \gtrsim - \beta \left( \frac{\tnormx{\vh{u}{}}{}^2}{2\epsilon} + \epsilon\frac{\tnormx{\vh{\xi}{}}{}^2}{2}\right),
\end{equation}
with Young's constant $\epsilon > 0$ to be properly chosen. Furthermore, using the inf-sup condition \eqref{eq: from inf-sup} we get:
\begin{equation}
    \beta\bh{\bm{\xi}_h}{\ph{p}{}}{} \geq \beta \|\ph{p}{}\|^2_{L^2(\Omega)} - \beta |\ph{p}{}|^2_J.
\end{equation}
Considering the last inequalities, choosing $\epsilon \simeq \frac{\beta}{\alpha}$, and considering \eqref{eq: from inf-sup}, equation \eqref{eq; Ah inequality} becomes:
\begin{equation}
    \mcal{A}_h((\vh{u}{},\ph{p}{}),\Phi(\vh{u}{},\ph{p}{})) \gtrsim \frac{\alpha}{2}\tnormx{\vh{u}{}}{}^2 + \left[ - \frac{(\beta)^2}{2\alpha} + \beta \right]\|\ph{p}{}\|^2_{L^2(\Omega)} + (\alpha - \beta) |\ph{p}{}|_J^2.
\end{equation}
Choosing $\alpha \geq \beta$ we obtain:
\begin{equation}
    \mcal{A}_h((\vh{u}{},\ph{p}{}),\Phi(\vh{u}{},\ph{p}{})) \gtrsim \beta \| (\vh{u}{}, \ph{p}{}) \|_{E}^2,
\end{equation}
and the proof is complete.
\end{proof}

\subsection{A-priori stability estimate} \label{sec: stability}
In this section, we derive an a-priori stability estimate for the discrete solution $(\vh{u}{}, \ph{p}{}) \in \bbf{X}_h \times M_h$ of problem \eqref{eq: discrete SDF}.
\begin{proposition}\label{prop: stability}
    Let Assumption~\ref{assu: condition on g1 and g2} and Assumption~\ref{assump: mesh properties} be satisfied. Moreover, assume that that the penalty parameters $\sigma_S, \sigma_D$ and $\sigma_\Gamma$ are large enough. \pa{Let $(\vh{u}{}, \ph{p}{})\in \bbf{X}_h\times M_h$ be the unique discrete solution of \eqref{eq: discrete SDF}.} Then,
    \begin{equation}
       \tnormxq{\vh{u}{}}{} + \| \nabla_h \cdot \vh{u}{D} \|^2_{L^2(\Omega_D)} + \tnormpq{\ph{p}{}}{} \lesssim \|\bbf{f}_S\|_{L^2(\Omega_S)}^2 + \|\bbf{f}_D\|_{L^2(\Omega_D)}^2.
    \end{equation}
\end{proposition}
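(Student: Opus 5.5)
Test the discrete problem \eqref{eq: discrete SDF} with $\vh{v}{}=\vh{u}{}$ and $\ph{q}{}=\ph{p}{}$ and subtract the two equations. The coupling terms $b_h$ cancel, leaving the energy identity
\begin{equation}
a_h(\vh{u}{},\vh{u}{}) + s_h(\ph{p}{},\ph{p}{}) = F_h(\vh{u}{}).
\end{equation}
Since $a_h(\bbf{0},\cdot)=0$, Lemma~\ref{lem: ah mon} with $\vh{v}{}=\bbf{0}$ gives $a_h(\vh{u}{},\vh{u}{})\gtrsim \tnormxq{\vh{u}{}}{}$; this is where the penalty parameters must be large enough. On the right-hand side, Cauchy--Schwarz yields $F_h(\vh{u}{})\le \|\bbf{f}_S\|_{L^2(\Omega_S)}\|\vh{u}{S}\|_{L^2(\Omega_S)}+\|\bbf{f}_D\|_{L^2(\Omega_D)}\|\vh{u}{D}\|_{L^2(\Omega_D)}$. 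The Darcy factor is directly controlled by $\normx{\vh{u}{D}}{D}$. For the Stokes factor I will use the discrete Poincaré inequality for broken spaces (\cite[Corollary 5.4]{SobolevEmbedding}), which gives $\|\vh{u}{S}\|_{L^2(\Omega_S)}\lesssim\|\vh{u}{S}\|_{dG}$ because the $dG$ norm contains the Dirichlet boundary jumps on $\Gamma_S$. Young's inequality then gives
\begin{equation}
\tnormxq{\vh{u}{}}{} + |\ph{p}{}|_J^2 \lesssim \|\bbf{f}_S\|^2_{L^2(\Omega_S)}+\|\bbf{f}_D\|^2_{L^2(\Omega_D)}.
\end{equation}

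Next I bound the pressure in $L^2$ using Lemma~\ref{lem: discrete right-inverse}. Take $\bm{\xi}_h$ associated with $\ph{p}{}$ and insert $\vh{v}{}=\bm{\xi}_h$ in the first equation, so that $b_h(\bm{\xi}_h,\ph{p}{}) = F_h(\bm{\xi}_h)-a_h(\vh{u}{},\bm{\xi}_h)$. Lemma~\ref{lem: ah cont} with $\vh{v}{}=\bbf{0}$ bounds $|a_h(\vh{u}{},\bm{\xi}_h)|$ by $\tnormx{\vh{u}{}}{}\tnormx{\bm{\xi}_h}{}$, and the same Poincaré argument bounds $|F_h(\bm{\xi}_h)|$ by $(\|\bbf{f}_S\|+\|\bbf{f}_D\|)\tnormx{\bm{\xi}_h}{}$. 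Using $\tnormx{\bm{\xi}_h}{}\le\|\ph{p}{}\|_{L^2(\Omega)}$, I obtain
\begin{equation}
\|\ph{p}{}\|^2_{L^2(\Omega)}\lesssim (\tnormx{\vh{u}{}}{}+\|\bbf{f}_S\|+\|\bbf{f}_D\|)\|\ph{p}{}\|_{L^2(\Omega)}+|\ph{p}{}|_J^2.
\end{equation}
Young's inequality and the velocity bound then control $\|\ph{p}{}\|_{L^2(\Omega)}$. The full norm $\tnormp{\ph{p}{}}{}$ is bounded by $\|\ph{p}{}\|_{L^2(\Omega)}+|\ph{p}{}|_J$, since the jump parts of $\tnormp{\cdot}{}$ are exactly the terms of $s_h$.

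The remaining and least standard term is $\|\nabla_h\cdot\vh{u}{D}\|_{L^2(\Omega_D)}$, which is not in $\tnormx{\cdot}{}$. I will recover it from the second equation by testing with $\ph{q}{}$ equal to $\nabla_h\cdot\vh{u}{D}$ on $\Omega_D$, extended by zero on $\Omega_S$. The zero-mean constraint of $M_h$ must be checked: the elementwise divergence theorem gives $\int_{\Omega_D}\nabla_h\cdot\vh{u}{D}$ as a sum of normal jumps, which need not vanish, so I would subtract the appropriate global constant and absorb it. With this test function, $b_h(\vh{u}{},\ph{q}{})$ contains $-\|\nabla_h\cdot\vh{u}{D}\|^2_{L^2(\Omega_D)}$. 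The other contributions are face terms $\averagel \ph{q}{}\averager\jumpl\vh{u}{D}\jumpr_\bbf{n}$ and $\ph{q}{}\jumpl\vh{u}{}\jumpr_\bbf{n}$ on $\Gamma$, together with $s_h(\ph{p}{},\ph{q}{})$. I bound them with the trace-inverse inequality \eqref{lemm: trace inverse inequality}: $\|\ph{q}{}\|_{L^2(F)}\lesssim h^{-1/2}\|\ph{q}{}\|_{L^2(K)}$, paired with $\sigma_D^{1/2}\sim h^{-1/2}$ on the jumps, which gives $|{\rm face\ terms}|\lesssim \tnormx{\vh{u}{}}{}\|\ph{q}{}\|$. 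Similarly $|s_h(\ph{p}{},\ph{q}{})|\le|\ph{p}{}|_J\,|\ph{q}{}|_J\lesssim|\ph{p}{}|_J\|\ph{q}{}\|$, since $\xi\sim h$. Young's inequality then gives $\|\nabla_h\cdot\vh{u}{D}\|\lesssim\tnormx{\vh{u}{}}{}+|\ph{p}{}|_J$. I expect this step, with the scalings and the mean-value correction, to be the main obstacle; summing the three bounds concludes the proof.
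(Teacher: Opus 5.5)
Your proposal is correct and follows the paper's proof. The paper also uses the energy identity with coercivity to bound $\tnormx{\vh{u}{}}$ and $|\ph{p}{}|_J$. It then uses the discrete inf-sup condition for $\|\ph{p}{}\|_{L^2(\Omega)}$; you use the equivalent right-inverse form of Lemma~\ref{lem: discrete right-inverse}. Finally, it tests the second equation with $(0,\nabla_h\cdot\vh{u}{D})$ for the divergence term, as in Appendix~\ref{app: bound div}.

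The zero-mean issue you raise is skipped in the paper but is harmless. Assume, as consistency requires, that the face sets of $b_h^S$ and $b_h^D$ contain the boundary faces but not $\Gamma$. Then elementwise integration by parts gives $b_h(\vh{u}{},1)=0$, because $b_h^\Gamma$ cancels the $\Gamma$-contributions of $b_h^S$ and $b_h^D$. Also $s_h(\ph{p}{},1)=0$. Hence subtracting the global mean changes neither form, and nothing needs to be absorbed.
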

\begin{proof}
We consider the discrete problem \eqref{eq: discrete SDF} and we sum the two equations:
\begin{equation}
    \ah{\vh{u}{}}{\vh{v}{}}{} + \bh{\vh{v}{}}{\ph{p}{}}{} - \bh{\vh{u}{}}{\ph{q}{}}{} + \sh{\ph{p}{}}{\ph{q}{}} = F(\vh{v}{}).
\end{equation}
Taking $\vh{v}{} = \vh{u}{}$ and $\ph{q}{} = \ph{p}{}$ we obtain:
\begin{equation}
    \begin{split}
        \ah{\vh{u}{}}{\vh{u}{}}{} + \sh{\ph{p}{}}{\ph{q}{}} &= F(\vh{u}{}) \leq \| \bbf{f}_S \|_{L^2(\Omega_S)} \| \vh{u}{S} \|_{L^2(\Omega_S)} + \| \bbf{f}_D \|_{L^2(\Omega_D)} \| \vh{u}{D} \|_{L^2(\Omega_D)} \\ & \lesssim \| \bbf{f}_S \|_{L^2(\Omega_S)}^2 + \| \vh{u}{S} \|_{L^2(\Omega_S)}^2 + \| \bbf{f}_D \|_{L^2(\Omega_D)}^2 + \| \vh{u}{D} \|_{L^2(\Omega_D)}^2.
    \end{split}
\end{equation}
Finally, we get:
\begin{equation}\label{eq: stab1}
    \tnormxq{\vh{u}{}}{} + |\ph{p}{}|_J^2 \lesssim  \| \bbf{f}_S \|_{L^2(\Omega_S)}^2 + \| \bbf{f}_D \|_{L^2(\Omega_D)}^2.
\end{equation}
Now we study the bound of $\|\nabla_h \cdot \vh{u}{D} \|_{L^2(\Omega_D)}$. To do so we consider the second equation of \eqref{eq: discrete SDF} and test it against $\ph{\Tilde{q}}{} = (0, \nabla \cdot \vh{u}{D})$, to obtain
\begin{equation}
    -\bh{\vh{u}{}}{\ph{\Tilde{q}}{}}{} + \sh{\ph{p}{}}{\ph{\Tilde{q}}{}} = 0.
\end{equation}
From the previous identity we can derive the following:
\begin{equation}\label{eq: bound norm div}
    \begin{split}
        \|\nabla \cdot \vh{u}{D}\|^2_{L^2(\Omega_D)} & \lesssim \sum_{F \in \scr{F}_\mcal{D}} \sigma_D \| \jumpl\vh{u}{D} \jumpr_\bbf{n} \|_{L^2(F)}^2 + \sum_{F \in \Gamma} \sigma_\Gamma \| \jumpl\vh{u}{} \jumpr_\bbf{n} \|_{L^2(F)}^2 \\ & + \sum_{F \in \scr{F}_\mcal{D}^i} \xi_D \| \jumpl \ph{p}{D} \jumpr_\bbf{n} \|^2_{L^2(F)} + \sum_{F \in \Gamma} \xi_\Gamma \| \jumpl \ph{p}{} \jumpr_\bbf{n} \|^2_{L^2(F)}.
    \end{split}
\end{equation}
The complete derivation of the estimate can be found in Appendix~\ref{app: bound div}.
Now, we consider the discrete inf-sup condition \eqref{eq:discrete infsup E} and we observe that:
\begin{equation}
    \bh{\vh{v}{}}{\ph{p}{}}{} = F(\vh{v}{}) - \ah{\vh{u}{}}{\vh{v}{}}{} \lesssim \|\bbf{f}_\mcal{S}\|_{L^2(\Omega_S)}\normx{\vh{v}{S}}{S}+ \|\bbf{f}_\mcal{D}\|_{L^2(\Omega_D)}\normx{\vh{v}{D}}{D}+ \tnormx{\vh{u}{}}{} \tnormx{\vh{v}{}}{}.
\end{equation}
Then, we recover:
\begin{equation}\label{eq: stab2}
    \|\ph{p}{}\|_{L^2(\Omega)} \lesssim \sup_{\vh{v}{}\in X_h} \frac{\|\bbf{f}_\mcal{S}\|_{L^2(\Omega_S)}\normx{\vh{v}{S}}{S}+ \|\bbf{f}_\mcal{D}\|_{L^2(\Omega_D)}\normx{\vh{v}{D}}{D}+ \tnormx{\vh{u}{}}{} \tnormx{\vh{v}{}}{}}{\tnormx{\vh{v}{}}{}} + |\ph{p}{}|_J.
\end{equation}
Now we observe that summing up Equations \eqref{eq: stab1}, \eqref{eq: stab2} and \eqref{eq: bound norm div}, we obtain the following estimate:
\begin{equation}\label{eq: big stability}
\tnormxq{\vh{u}{}}{} + \| \nabla_h \cdot \vh{u}{D} \|^2_{L^2(\Omega_D)} + \tnormpq{\ph{p}{}}{} \lesssim \|\bbf{f}_S\|_{L^2(\Omega_S)}^2 + \|\bbf{f}_D\|_{L^2(\Omega_D)}^2,
\end{equation}
and the proof is complete.
\end{proof}

\section{Error estimates}\label{sec: error estimates}
In this section we prove a bound for $\normx{\bbf{u} - \vh{u}{}}{}$ and $\normp{p - \ph{p}{}}{}$. By applying the triangular inequality, we only need to find a bound for $\normx{\vh{u}{} - \bbf{u}_I}{}$ and $\normp{\ph{p}{} - p_I }{}$ where $\bbf{u}_I$ and $p_I$ are suitable interpolation of the continuous solution. \\

First, we introduce the following definition for the element-wise $L^2$-orthogonal projection:
$\pi_h^{0,l}: L^1(\Omega) \to P^l(\scr{K})$ defined such that: for all $v \in L^1(\Omega)$ and all $K \in \scr{K}$
    \begin{equation}
        (\pi_h^{0,l}v)_{|_K} = \pi_K^{0,l}v,
        \qquad\text{ with }\quad
        \int_K (\pi_K^{0,l}v) \;w_h = 
        \int_K v \; w_h, \quad
        \forall w_h\in P^l(K).
    \end{equation}
We also recall the following result from \cite{HighOrderpolynomialapprox}:
\begin{theorem}[Approximation properties of the $L^2$-orthogonal projector]
\label{thm: Approximation property}
\pa{Let Assumption~\ref{assump: mesh properties} be satisfied.}
Let a polynomial degree $l \geq 0$, an integer $s \in \{0, ..., l+1 \}$ and a real number $p \in [1, \infty ]$ be given. Then, for any $K$ element or face of $\scr{M}$, all $v \in W^{s,p}(K)$, and all $m \in \{0, ..., s \}$,
    \begin{equation}\label{eq: Approx estimate}
        |v - \pi^{0,l}_{K}v|_{W^{m,p(K)}} \lesssim h_{K}^{s-m}|v|_{W^{s,p}(K)}.
    \end{equation}
    Moreover, if $s \geq 1$, for all $K \in \scr{K}$, all $v \in W^{s,p}(K)$, all $F \in \scr{F}_{K}$, and all $m \in \{ 0, ..., s-1 \}$, it holds that
    \begin{equation} \label{eq: approx estimate boundary}
        h_K^\frac{1}{p} |v - \pi^{0,l}_K v|_{W^{m,p}(F)} \lesssim h_K^{s - m} |v|_{W^{s,p}(K)}.
    \end{equation}
\end{theorem}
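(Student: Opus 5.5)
This is the classical approximation result for the local $L^2$-orthogonal projector on polytopal meshes, so I plan to argue element by element. The three ingredients are the $L^p$-stability of $\pi_K^{0,l}$, a polynomial-invariance (Bramble--Hilbert/Deny--Lions) argument, and the mesh regularity of Assumption~\ref{assump: mesh properties}. The regularity assumption makes each $K$ star-shaped with respect to a ball of radius comparable to $h_K$, or at least a finite union of such sets with uniformly bounded overlap.

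\emph{Step 1: stability of the projector.} For $v\in L^p(K)$ I will show $\|\pi_K^{0,l}v\|_{L^p(K)}\lesssim \|v\|_{L^p(K)}$. For $p=2$ this is immediate. For general $p$, I write $\pi_K^{0,l}v$ in an $L^2(K)$-orthonormal basis of $P^l(K)$ and use the equivalence of norms on the finite-dimensional space $P^l(K)$. After scaling to a reference configuration, this gives $\|q\|_{L^\infty(K)}\lesssim |K|^{-1/2}\|q\|_{L^2(K)}$ for all $q\in P^l(K)$. The constant is uniform thanks to the contact and shape regularity of the simplicial submesh, since $K$ contains a simplex of size comparable to $h_K$ and polynomial norms on $K$ are controlled by norms on that simplex. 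Combined with Hölder's inequality, this yields the stability bound. I expect this uniformity over arbitrary polytopes to be the main obstacle. I would handle it with the inverse and trace-inverse machinery already implied by \eqref{lemm: trace inverse inequality}, i.e.\ the discrete inverse inequalities of \cite{HighOrderpolynomialapprox}.

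\emph{Step 2: the bulk estimate \eqref{eq: Approx estimate}.} Since $\pi_K^{0,l}q=q$ for every $q\in P^l(K)$, I write $v-\pi_K^{0,l}v=(v-q)-\pi_K^{0,l}(v-q)$ and choose for $q$ the averaged Taylor polynomial of degree $s-1\le l$. The Bramble--Hilbert lemma on domains star-shaped with respect to a ball of radius $\simeq h_K$ (or its extension via chunkiness/submesh arguments) gives
\[
|v-q|_{W^{m,p}(K)}\lesssim h_K^{s-m}|v|_{W^{s,p}(K)}\qquad \text{for all } m\le s .
\]
It remains to bound $|\pi_K^{0,l}(v-q)|_{W^{m,p}(K)}$. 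Using the inverse inequality $|w|_{W^{m,p}(K)}\lesssim h_K^{-m}\|w\|_{L^p(K)}$ for $w\in P^l(K)$, then Step 1, then the $m=0$ case of the Taylor estimate, I get $h_K^{-m}\|v-q\|_{L^p(K)}\lesssim h_K^{s-m}|v|_{W^{s,p}(K)}$. The case where $K$ is a face follows by applying the same argument on the face, which is itself a regular polytope of dimension $d-1$.

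\emph{Step 3: the face estimate \eqref{eq: approx estimate boundary}.} I apply the continuous trace inequality $\|w\|_{L^p(F)}^p\lesssim h_K^{-1}\|w\|_{L^p(K)}^p+h_K^{p-1}\|\nabla w\|_{L^p(K)}^p$, valid uniformly on regular polytopal elements. I use it with $w=\partial^\alpha(v-\pi_K^{0,l}v)$ for $|\alpha|=m\le s-1$, so that
\[
h_K^{1/p}|v-\pi_K^{0,l}v|_{W^{m,p}(F)}\lesssim |v-\pi_K^{0,l}v|_{W^{m,p}(K)}+h_K|v-\pi_K^{0,l}v|_{W^{m+1,p}(K)} .
\]
Invoking Step 2 with $m$ and with $m+1\le s$ gives $h_K^{s-m}|v|_{W^{s,p}(K)}$, which concludes the proof. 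The case $p=\infty$ is handled identically, with the convention $h_K^{1/\infty}=1$.
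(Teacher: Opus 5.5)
Your architecture is the standard proof of this result: $L^p$-stability of $\pi_K^{0,l}$, polynomial invariance with a Taylor-type polynomial and an inverse inequality, then the continuous trace inequality applied to $\partial^\alpha(v-\pi_K^{0,l}v)$ with $|\alpha|=m\le s-1$. The paper does not prove the theorem; it only recalls it from \cite{HighOrderpolynomialapprox}. Steps~1 and~3 are sound. Your orthonormal-basis argument with $\|q\|_{L^\infty(K)}\lesssim |K|^{-1/2}\|q\|_{L^2(K)}$ already gives stability for every $p\in[1,\infty]$. That bound follows from a ball of radius $\gtrsim\gamma^2 h_K$ inside a submesh simplex and $K\subset B(x,h_K)$. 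So Step~1 is not the main obstacle.

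The real difficulty is Step~2, and your justification there does not hold. Assumption~\ref{assump: mesh properties} does not make $K$ star-shaped with respect to a ball, or even a point. A U-shaped element made of a few shape-regular triangles of size $\simeq h_K$ satisfies both conditions but has empty kernel.

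Your fallback, finitely many star-shaped pieces with bounded overlap, does not fix this. Bramble--Hilbert on each piece $X_i$ gives its own polynomial $q_i$. The identity $v-\pi_K^{0,l}v=(v-q)-\pi_K^{0,l}(v-q)$ instead requires a \emph{single} $q\in P^{s-1}(K)$ approximating $v$ on all of $K$. Pieces that touch only along faces, like the submesh simplices, give no control of $q_i-q_j$.

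What is needed is the chain condition behind the notion of sets connected by star-shaped sets used in \cite{HighOrderpolynomialapprox}:
\begin{itemize}
\item $K=\bigcup_{i=1}^N X_i$ with $N\le C(\gamma,d)$, since the regularity bounds the number of submesh simplices in $K$;
\item each $X_i$ is star-shaped with respect to a ball of radius $\simeq h_K$;
\item any two pieces are linked by a chain whose consecutive members share a ball $B$ of radius $\simeq h_K$.
\end{itemize}
Unions of two face-adjacent submesh simplices work, with the ball centred at the barycentre of the common face.

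Then
\[
\|q_i-q_j\|_{L^p(B)}\le\|v-q_i\|_{L^p(X_i)}+\|v-q_j\|_{L^p(X_j)}\lesssim h_K^{s}|v|_{W^{s,p}(K)}.
\]
Polynomial norm equivalence between $B$ and $K\subset B(x_B,h_K)$, plus the inverse inequality, gives $|q_i-q_j|_{W^{m,p}(K)}\lesssim h_K^{s-m}|v|_{W^{s,p}(K)}$. Propagating along chains shows that $q:=q_1$ satisfies the Bramble--Hilbert bound on each $X_j$, hence on $K$. With this gluing step supplied, the rest of your argument goes through unchanged.
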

To simplify the notation, we define the interpolation errors and the discrete errors: $\bbf{e}_I^u = \bbf{u}_I - \bbf{u}$ and $e^p_I = p_I - p$ are the velocity and pressure interpolation errors, and $\evh{e}{u} = \vh{u}{} - \bbf{u}_I$ and $\eph{e}{p} = \ph{p}{} - p_I$  are the discrete errors, where $(\bbf{u}, p)$ is the solution of the continuous problem and $(\vh{u}{}, \ph{p}{})$ is the solution of the discrete problem. Additionally, we define: $\bbf{e}^u = \vh{u}{} - \bbf{u}$ and $e^p = \ph{p}{} - p$, and we observe that $\bbf{e}^u = \evh{e}{u} + \bbf{e}^u_I$ and $e^p = \eph{e}{p}+e^p_I$.

Now, we state the following error estimate theorem.

\begin{theorem}\label{thm: error estimate}
    \pa{Let Assumption~\ref{assu: condition on g1 and g2} and Assumption~\ref{assump: mesh properties} be satisfied.} Let the mesh size be such that $h \lesssim \rho$ and assume that the penalty parameters \eqref{eq: penalty} are large enough. Additionally, we assume that the solutions to \eqref{eq: weak SDF} satisfy
    $$\bbf{u}_\mcal{S} \in [H^{t_S + 1}(\Omega_\mcal{S})]^d,\; \bbf{u}_\mcal{D} \in [H^{t_D + 1}(\Omega_\mcal{D})]^d,\; p_\mcal{S} \in H^{m_S + 1}(\Omega_\mcal{S})\, \text{ and }\, p_\mcal{D} \in H^{m_D + 1}(\Omega_\mcal{D}).$$
    Then, it holds:
    \begin{equation}
    \begin{split}
        \tnormeq{(\evh{e}{u},\eph{e}{p})}{} \lesssim& h^{2t_S}|\bbf{u}^S|_{H^{t_S+1}(\Omega_S)}^2 + h^{2t_D}|\bbf{u}^D|_{H^{t_D+1}(\Omega_D)}^2 + h^{2t}|\bbf{u}|_{H^{t+1}(\Omega)}^2 \\ &+ h^{2m_S+2} |p^\mcal{S}|_{H^{m_S+1}(\Omega_\mcal{S})} + h^{2m_D+2} |p^\mcal{D}|_{H^{m_D+1}(\Omega_\mcal{D})},
    \end{split}
\end{equation}
where $t_S$ and $t_D$ are the polynomial degrees for the Stokes' and Darcy's velocity fields,  $m_S$ and $m_D$ are the polynomial degrees of the Stokes and Darcy's pressures, resepctively,  and
$t = \min\{t_\mcal{S}, t_\mcal{D}\}$.
\end{theorem}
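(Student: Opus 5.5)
The plan is a Strang-type argument built on the linearly mapped coercivity of $\mcal{A}_h$, applied to the discrete errors $(\evh{e}{u},\eph{e}{p})$. I take $\bbf{u}_I=\pi_h^{0,l}\bbf{u}$ and $p_I=\pi_h^{0,l}p$ componentwise on each subdomain. The $L^2$-projection needs a mean correction so that $p_I\in M_h$; since $\pi_h^{0,l}$ preserves element averages, $p_I$ already has zero global mean. I would use the interpolant $\bbf{\Pi}^l$ instead if the continuity of the Stokes velocity across interior faces is needed for the averaged-flux terms.

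\textbf{Step 1: consistency.} Assuming the regularity in the statement, I check that the exact solution $(\bbf{u},p)$ satisfies \eqref{eq: discrete SDF} with $\bh{}{}{}$, $\ah{}{}{}$ evaluated on it. This uses element-wise integration by parts, the vanishing jumps of $\bbf{u}^S$, $p^S$, $p^D$ and of the normal components of $\bbf{u}^D$, the interface conditions (continuity of normal flux, balance of normal stress, BJS), and $s_h(p,\cdot)=0$. Hence Galerkin orthogonality holds:
\[
\mcal{A}_h((\vh{u}{},\ph{p}{}),(\vh{v}{},\ph{q}{}))-\mcal{A}_h((\bbf{u},p),(\vh{v}{},\ph{q}{}))=0 .
\]
Here $\mcal{A}_h$ is extended to sufficiently smooth arguments, with the face terms well defined thanks to the $H^{t+1}$ regularity.

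\textbf{Step 2: mapped coercivity for the error.} I repeat the construction of the preceding lemma with $\Phi(\evh{e}{u},\eph{e}{p})=(\alpha\evh{e}{u}+\beta\bm{\xi}_h,\alpha\eph{e}{p})$, where $\bm{\xi}_h$ is given by Lemma~\ref{lem: discrete right-inverse} applied to $\eph{e}{p}$. In the argument, $\ah{\vh{u}{}}{\cdot}{}-\ah{\bbf{u}_I}{\cdot}{}$ replaces $a_h(\vh{u}{},\cdot)$ and is controlled by Lemmas~\ref{lem: ah mon}--\ref{lem: ah cont}. This gives
\[
\tnormeq{(\evh{e}{u},\eph{e}{p})}\lesssim \mcal{A}_h((\vh{u}{},\ph{p}{}),\Phi)-\mcal{A}_h((\bbf{u}_I,p_I),\Phi)=\mcal{A}_h((\bbf{u},p),\Phi)-\mcal{A}_h((\bbf{u}_I,p_I),\Phi).
\]
The right-hand side splits into four pieces: $a_h(\bbf{u},\bbf{w}_h)-a_h(\bbf{u}_I,\bbf{w}_h)$, $b_h(\bbf{w}_h,e_I^p)$, $b_h(\bbf{e}_I^u,\eph{e}{p})$ and $s_h(e_I^p,\eph{e}{p})$, with $\bbf{w}_h=\alpha\evh{e}{u}+\beta\bm{\xi}_h$. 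The norms satisfy $\tnormx{\bbf{w}_h}+\tnormp{\alpha\eph{e}{p}}\lesssim\tnorme{(\evh{e}{u},\eph{e}{p})}$.

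\textbf{Step 3: bounding the interpolation terms.} Each term is bounded by a product of an interpolation quantity and $\tnorme{(\evh{e}{u},\eph{e}{p})}$, followed by Young's inequality.
\begin{itemize}
\item \emph{Nonlinear form.} I use the Lipschitz bound \eqref{eq: g_S bdd and Lip continuous}, \eqref{eq: g_D bdd and Lip continuous} in the volume terms. The face flux term $\averagel g_S\bbf{D}(\bbf{u})-g_S\bbf{D}(\bbf{u}_I)\averager$ is handled with $\sigma_S^{-1/2}$ weights and Theorem~\ref{thm: Approximation property} (face estimate \eqref{eq: approx estimate boundary}). The BJS term uses $\rho^{-1}\|\bbf{e}_I^u\|^2_{L^2(\Gamma)}\lesssim \rho^{-1}h^{2t_S+1}$, and $h\lesssim\rho$ absorbs $\rho^{-1}h$.
\item \emph{$b_h(\bbf{w}_h,e_I^p)$.} I rewrite it element-wise. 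The volume term vanishes when $\nabla\cdot\bbf{w}_h\in P^{l-1}$; otherwise I integrate by parts onto $\nabla_h e_I^p$. The face terms are bounded by $\sigma^{-1/2}\|\averagel e_I^p\averager\|$, which gives $h^{m+1}$.
\item \emph{$b_h(\bbf{e}_I^u,\eph{e}{p})$.} I integrate by parts onto $\nabla_h\eph{e}{p}$ and use orthogonality of $\pi_h^{0,l}$ against $\nabla \eph{e}{p}|_K\in P^{l-1}$. What remains are face terms $\jumpl\eph{e}{p}\jumpr\,\averagel\bbf{e}_I^u\averager$, bounded by $\xi^{1/2}\|\jumpl\eph{e}{p}\jumpr\|\cdot\xi^{-1/2}\|\bbf{e}_I^u\|$, i.e.\ $h^{-1/2}\cdot h^{t+1/2}$. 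On $\Gamma$, where the velocity traces from both sides enter, this produces the $h^{2t}|\bbf{u}|^2_{H^{t+1}(\Omega)}$ contribution.
\item \emph{Stabilization.} $s_h$ is handled directly by Cauchy--Schwarz.
\end{itemize}

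The main obstacle I expect is the term $b_h(\bbf{e}_I^u,\eph{e}{p})$ together with the Darcy contributions. The Darcy velocity is measured only in $L^2$ plus normal jumps, so derivatives cannot be moved onto $\bbf{e}_I^u$. Transferring them onto the pressure instead requires that the pressure stabilization $\xi$ controls the pressure jumps with exactly the right $h$-scaling. I would check this carefully on interface faces, where $\omega=0$ in the weighted average and only the Darcy trace appears. If the jump control is insufficient, I would fall back on an interpolant whose normal component is continuous across faces, which kills the normal-jump terms of $\bbf{e}_I^u$.
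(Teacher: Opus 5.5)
Your argument is correct in substance (up to two caveats below that the paper's proof shares), and it follows a genuinely different route from the paper.

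\textbf{Comparison of the two routes.} The paper tests the error equations with $(\evh{e}{u},\eph{e}{p})$ and uses Lemma~\ref{lem: ah mon} to bound $\tnormx{\evh{e}{u}}^2+|\eph{e}{p}|_J^2$ by $\bbf{R_1},\dots,\bbf{R_4}$. It then recovers $\|\eph{e}{p}\|_{L^2(\Omega)}$ separately, from the $\tnormdiv{\cdot}$-based inf-sup of Theorem~\ref{thm: discrete infsup} and the first error equation. This brings in $\|\nabla_h\cdot\evh{e}{u^\mcal{D}}\|_{L^2(\Omega_D)}$, which it closes with the divergence estimate of Appendix~\ref{app: bound div}. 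Meanwhile $\bbf{R_2},\bbf{R_3}$ are bounded by plain Cauchy--Schwarz, which leaves divergence and $\|\eph{e}{p}\|_{L^2(\Omega)}$ terms on the right that must be absorbed.

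You instead apply linearly mapped coercivity directly to the error, with one test pair $\Phi$ built from Lemma~\ref{lem: discrete right-inverse}, so velocity, pressure and jump seminorm are controlled in one step. You also exploit the $L^2$-orthogonality of $\pi_h^{0,l}$ in the equal-order setting:
\begin{itemize}
\item the volume part of $b_h(\bbf{w}_h,e_I^p)$ vanishes because $\nabla\cdot\bbf{w}_h|_K\in P^{l-1}(K)$;
\item after integration by parts, the volume part of $b_h(\bbf{e}_I^u,\eph{e}{p})$ vanishes because $\nabla\eph{e}{p}|_K\in[P^{l-1}(K)]^d$.
\end{itemize}
Only face terms remain, balanced by $\sigma\sim h^{-1}$ on velocity jumps and $\xi\sim h$ on pressure jumps. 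Your route never needs $\nabla_h\cdot\evh{e}{u^\mcal{D}}$, the $\tnormdiv{\cdot}$ inf-sup, or any absorption of $\|\eph{e}{p}\|^2_{L^2(\Omega)}$. In exchange, the paper's route also controls the Darcy divergence error, i.e.\ it bounds $\tnormdiv{\evh{e}{u}}$, which is more than the statement asks.

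Since orthogonality is the crux, drop your hedge of switching to $\bbf{\Pi}^l$. With a non-orthogonal interpolant, $\int_K e_I^p\,\nabla\cdot\bbf{w}_h$ on Darcy elements would need divergence control that $\tnormx{\cdot}$ does not provide. The face-flux term does not need a continuous interpolant anyway, only the face estimate of Theorem~\ref{thm: Approximation property}.

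\textbf{Caveats, both shared with the paper's proof.}
\begin{itemize}
\item $s_h(p,\cdot)$ does not vanish on $\Gamma$. There $\jumpl p\jumpr_\bbf{n}=(p^S-p^D)\bbf{n}_\Gamma^S$, and the normal-stress condition gives $p^S-p^D=(g_S(|\bbf{D}(\bbf{u}^S)|)\bbf{D}(\bbf{u}^S)\bbf{n}_\Gamma^S)\cdot\bbf{n}_\Gamma^S$. So your Galerkin orthogonality, like the paper's consistency identity, holds only when the normal viscous stress vanishes on $\Gamma$. That is the case in the manufactured solutions of Section~\ref{sec: numerical results}. Otherwise a consistency term of size $h^{1/2}\|p^S-p^D\|_{L^2(\Gamma)}|\eph{e}{p}|_J$ survives, and the argument can then only prove $O(h^{1/2})$.
\item $b_h^D$ also sums over the boundary faces $\scr{F}_\mcal{D}^b$, but $\tnormx{\cdot}$ penalizes normal jumps only on $\scr{F}_\mcal{D}^i$. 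On those boundary faces the $\sigma^{-1/2}$ weighting in your bound of $b_h(\bbf{w}_h,e_I^p)$ is unavailable. The trace-inverse inequality there gives $h^{m_D}$ instead of $h^{m_D+1}$. This is harmless for the equal-order rate $h^l$, but it does not literally yield the $h^{2m_D+2}$ term in the statement.
\end{itemize}
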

\begin{rmk}
\pa{We observe that the error bounds are consistent with the error convergence theory for the Discontinuous Galerkin method applied to the Stokes equation \cite{RiviereDG}. }
\end{rmk}
\begin{proof}
We observe that the following consistency property holds
\begin{equation}\label{eq: consistency (u,p)}
\begin{cases}
\begin{aligned}
\ah{\bbf{u}}{\vh{v}{}}{} + \bh{\vh{v}{}}{p}{} 
&= (\bbf{f}, \vh{v}{}) 
&& \forall \vh{v}{} \in \bbf{X}_h, \\
-\bh{\bbf{u}}{\ph{q}{}}{}+\sh{p}{\ph{q}{}} 
&= 0 
&& \forall \ph{q}{} \in M_h.
\end{aligned}
\end{cases}
\end{equation}
assuming that the continuous solutions $(\bbf{u}, p)$ are smooth enough.
Now, we subtract \eqref{eq: consistency (u,p)} from \eqref{eq: discrete SDF} and we obtain the following error equations
\begin{equation}\label{eq: equazione errore}
    \begin{cases}
    \begin{aligned}
        \ah{\vh{u}{}}{\vh{v}{}}{} - \ah{\bbf{u}}{\vh{v}{}}{} + \bh{\vh{v}{}}{e^p}{} &= 0 \qquad &&\forall \vh{v}{} \in \bbf{X}_h,\\ 
        -\bh{\bbf{e}^u}{\ph{q}{}}{} + \sh{e^p}{\ph{q}{}} &= 0 \qquad &&\forall \ph{q}{} \in M_h. \\
    \end{aligned}
    \end{cases}
\end{equation}
Using the interpolation and discrete error definitions, system \eqref{eq: equazione errore} becomes:
\begin{equation}\label{eq: bh(vh, ehp)}
    \begin{cases}
    &\begin{split}
         \ah{\vh{u}{}}{\vh{v}{}}{} - \ah{\bbf{u}_I}{\vh{v}{}}{} &+ \ah{\bbf{u}_I}{\vh{v}{}}{} - \ah{\bbf{u}}{\vh{v}{}}{} \\ &+ b_h(\vh{v}{},\eph{e}{p}) + \bh{\vh{v}{}}{e^p_I}{}= 0 \qquad \forall \vh{v}{} \in \bbf{X}_h,\\
    \end{split}\\
    & - \bh{\evh{e}{u}}{\ph{q}{}}{} - \bh{\bbf{e}^u_I}{\ph{q}{}}{} + \sh{\eph{e}{p}}{\ph{q}{}} + \sh{e_I^p}{\ph{q}{}} = 0 \qquad \forall \ph{q}{} \in M_h.
\end{cases}
\end{equation}
Then, we test against $\vh{v}{} = \evh{e}{u}$ and $\ph{q}{} = \eph{e}{p}$, to obtain
\begin{equation}
    \begin{cases}
    &\ah{\vh{u}{}}{\evh{e}{u}}{} - \ah{\bbf{u}_I}{\evh{e}{u}}{} + \bh{\evh{e}{u}}{\eph{e}{p}}{} = \ah{\bbf{u}}{\evh{e}{u}}{} -\ah{\bbf{u}_I}{\evh{e}{u}}{} - \bh{\evh{e}{u}}{e^p_I}{}\\
    & -\bh{\evh{e}{u}}{\eph{e}{p}}{} + \sh{\eph{e}{p}}{\eph{e}{p}} = \bh{\bbf{e}^u_I}{\eph{e}{p}}{} - \sh{e^p_I}{\eph{e}{p}}
\end{cases}
\end{equation}
Summing up the two equations above, we obtain
\begin{equation}\label{eq: errore 2}
    \ah{\vh{u}{}}{\evh{e}{u}}{} - \ah{\bbf{u}_I}{\evh{e}{u}}{} + \sh{\eph{e}{p}}{\eph{e}{p}} = \underbrace{\ah{\bbf{u}}{\evh{e}{u}}{} - \ah{\bbf{u}_I}{\evh{e}{u}}{}}_{\bbf{R_1}} \underbrace{- \bh{e_I^p}{\evh{e}{u}}{}}_{\bbf{R_2}} + \underbrace{\bh{\bbf{e}^u_I}{\eph{e}{p}}{}}_{\bbf{R_3}} \underbrace{- \sh{e_I^p}{\eph{e}{p}}}_{\bbf{R_4}}.
\end{equation}
Now, for the monotonicity of $\ah{\cdot}{\cdot}{}$, see Lemma~\ref{lem: ah mon}, we have:
\begin{equation}
    \tnormxq{\evh{e}{u}}{} + |\eph{e}{p}|_J^2 \lesssim \bbf{R_1} + \bbf{R_2} + \bbf{R_3} + \bbf{R_4}.
\end{equation}
From the discrete inf-sup condition \eqref{eq:discrete infsup} and the first equation in \eqref{eq: bh(vh, ehp)} it is inferred that 
\begin{equation}\label{eq: errore p bound}
    \begin{split}
        \|\eph{e}{p}\|_{L^2(\Omega)} &\lesssim \sup_{\vh{v}{} \in \bbf{X}_h, \vh{v}{}\neq\bbf{0}} \frac{\bh{\vh{v}{}}{\eph{e}{p}}{}}{\tnormdiv{\vh{v}{}}{}} + |\eph{e}{p}|_J \\ & = \sup_{\vh{v}{} \in \bbf{X}_h, \vh{v}{} \neq \bbf{0}} \frac{\ah{\bbf{u}}{\vh{v}{}}{} - \ah{\bbf{u}_I}{\vh{v}{}}{} - \bh{\vh{v}{}}{e_I^p}{} - \ah{\vh{u}{}}{\vh{v}{}}{} + \ah{\bbf{u}_I}{\vh{v}{}}{} }{\tnormdiv{\vh{v}{}}{}} + |\eph{e}{p}|_J\\ & \lesssim \tnormx{\bbf{e}^u_I}{} + \tnormdiv{\evh{e}{u}}{} + \|e_I^p\|_{L^2(\Omega)}+ |\eph{e}{p}|_J.
    \end{split}
\end{equation}
Putting together \eqref{eq: errore 2}, \eqref{eq: errore p bound}, and proceeding as in the stability estimates, taking into account that we can bound the norm of the divergence of ${\evh{e}{u}}{}$ as in \eqref{eq: bound norm div}, we obtain that:
\begin{equation}\label{eq: convergence estimate 1}
\begin{split}
    \tnormdivq{\evh{e}{u}}{} + \tnormpq{\eph{e}{p}}{} \lesssim \bbf{R_1 + R_2 + R_3 + R_4} + \tnormxq{\bbf{e}^u_I}{} + \|e_I^p\|_{L^2(\Omega)}. 
\end{split}
\end{equation}
Now, we study the bound of $\tnormxq{\bbf{e}^u_I}{}$ and $\|e_I^p\|_{L^2(\Omega)}$.
Referring to Theorem \ref{thm: Approximation property}, we conclude that:
\begin{equation}
    \tnormxq{\bbf{e}^u_I}{} \lesssim h^{2t_\mcal{S}} |\bbf{u}^\mcal{S}|^2_{H^{t_\mcal{S}+1}(\Omega_\mcal{S})} + h^{2t_\mcal{D}} |\bbf{u}_\mcal{D}|^2_{H^{t_\mcal{D}+1}(\Omega_\mcal{D})} + h^{2t} |\bbf{u}|^2_{H^{t+1}(\Omega)},
\end{equation}
and
\begin{equation}
    \|e_I^p\|_{L^2(\Omega)} \lesssim h_K^{2(m_\mcal{S}+1)} |p^\mcal{S}|^2_{H^{m_\mcal{S}+1}(\Omega_\mcal{S})} + h_K^{2(m_\mcal{D}+1)} |p^\mcal{D}|^2_{H^{m_\mcal{D}+1}(\Omega_\mcal{D})}.
\end{equation}
Now we analyze the terms $\bbf{R_1, R_2, R_3}$ and $\bbf{R_4}$ separately. For $\bbf{R_1}$, we set
\begin{equation}
\begin{split}
    \bbf{R_1} &= \underbrace{\sum_{K \in \scr{K}_\mcal{S}} \int_K (g_S(|\bbf{D}(\bbf{u}^S)|)\bbf{D}(\bbf{u}^S) - g_S(|\bbf{D}(\bbf{u}^S_I)|)\bbf{D}(\bbf{u}_I^S)):\bbf{D}(\evh{e}{u^\mcal{S}}) dx}_{I_1} \\ & - \underbrace{\sum_{F \in \scr{F}_\mcal{S}} \int_F \averagel g_S(|\bbf{D}(\bbf{u}^S)|)\bbf{D}(\bbf{u}^S)\bbf{n} - g_S(|\bbf{D}(\bbf{u}^S_I)|)\bbf{D}(\bbf{u}^S_I)\bbf{n} \averager \cdot \jumpl \evh{e}{u^\mcal{S}} \jumpr ds}_{I_2} \\ & + \underbrace{\sum_{j=1}^{d-1} \sum_{F \in \Gamma} \int_F \rho^{-1} (\evh{e}{u^\mcal{S}} \cdot \bbf{t}_{\Gamma,j}) (\bbf{e}_I^{u^S} \cdot \bbf{t}_{\Gamma,j}) ds}_{I_3}  + \underbrace{\sum_{K \in \scr{K}_\mcal{D}} \int_K [(\bbf{K}^{-1}(g_D(|\bbf{u}^D|)\bbf{u}^D - g_D(|\bbf{u}^D_I|)\bbf{u}^D_I)]\cdot (\evh{e}{u^\mcal{D}}) dx}_{I_4} \\ & + \underbrace{\sum_{F \in \scr{F}_\mcal{S}} \int_F \sigma_S \jumpl \bbf{e}_I^{u^S} \jumpr \cdot \jumpl \evh{e}{u^\mcal{S}} \jumpr ds + \sum_{F \in \scr{F}_\mcal{D}} \int_F \sigma_D \jumpl \bbf{e}_I^{u^D} \jumpr_\bbf{n} \jumpl \evh{e}{u^\mcal{D}} \jumpr_\bbf{n} ds + \sum_{F \in \Gamma} \int_F \sigma_\Gamma \jumpl \bbf{e}_I^{u} \jumpr_\bbf{n} \jumpl \evh{e}{u} \jumpr_\bbf{n} ds, }_{I_5}.
\end{split}
\end{equation}
First, we consider $I_1$ and we apply the Lipschitz continuity of $g_S(|\cdot|)\cdot$, \eqref{eq: g_S bdd and Lip continuous}, Young's inequality and the approximation property of the $L^2$-interpolation, see \eqref{thm: Approximation property}, to obtain\\ 
\begin{equation} \label{eq: R1_1}
    I_1 \lesssim h^{2t_\mcal{S}} |\bbf{u}^S|_{H^{t_\mcal{S}+1}(\Omega_S)}^2 + \| \nabla_h \evh{e}{u^\mcal{S}} \|_{L^2(\Omega_S)}^2.
\end{equation}
For the bound of $I_2$, we proceed as before and we use also the trace inverse inequality to recover the norm of the interpolant on  $\Omega_S$, i.e.
\begin{equation}\label{eq: R1_2}
\begin{split}
    I_2 \lesssim h^{2t_\mcal{S}} |\bbf{u}^S|_{H^{t_\mcal{S}+1}(\Omega_S)}^2 + \sum_{F \in \scr{F}_\mcal{S}} h_K^{-1} \| \jumpl \evh{e}{u^\mcal{S}} \jumpr \|_{L^2(F)}.
\end{split}
\end{equation}
Now, we consider the interface terms and employ Young's inequality, the trace inequality, and the approximation properties of the $L^2$-interpolation to obtain
\begin{equation}\label{eq: R1_3}
    \begin{split}
        I_3 \lesssim \sum_{K: \partial K \cap \Gamma \neq \emptyset} h_K^{2t_\mcal{S}} |\bbf{u}^S|^2_{H^{t_\mcal{S}+1}(K)} + \sum_{F \in \Gamma} \rho^{-1}\|\evh{e}{u^\mcal{S}} \cdot \bbf{t}_{\Gamma,j}\|^2_{L^2(F)} \qquad \forall j=1,...,d-1.
    \end{split}
\end{equation}
For the terms relative to $\Omega_D$, as before, we apply the trace-inverse inequality, Young's inequality and the approximation properties of the interpolation. Additionally, we apply the Lipschitz continuity of $g_D(|\cdot|)\cdot$, see \eqref{eq: g_D bdd and Lip continuous}, to obtain 
\begin{equation}\label{eq: R1_4}
    \begin{split}
        I_4 \lesssim h^{2(t_\mcal{D} + 1)} |\bbf{u}^D|^2_{H^{t_\mcal{D}+1}(\Omega_D)} + \|\evh{e}{u^\mcal{D}}\|_{L^2(\Omega_D)}.
    \end{split}
\end{equation}
Finally, considering the last term contributing to $\bbf{R_1}$, we have
\begin{equation}
    \begin{split}
        I_5 &\lesssim h_K^{2t_\mcal{S}}|\bbf{u}^S|^2_{H^{t_\mcal{S}+1}} +  h_K^{2t_\mcal{D}} |\bbf{u}^D|^2_{H^{t_\mcal{D}+1}}(\Omega_D) + h_K^{2t} |\bbf{u}|^2_{H^{t+1}(\Omega)}
         \\ &
        + \sum_{F \in \scr{F}_\mcal{S}} \sigma_S \| \jumpl \evh{e}{u^\mcal{S}} \jumpr \|^2_{L^2(F)} + \sum_{F \in \scr{F}_\mcal{D}} \sigma_D \| \jumpl \evh{e}{u^\mcal{D}} \jumpr_\bbf{n} \|^2_{L^2(F)} + \sum_{F \in \Gamma} \sigma_\Gamma \| \jumpl \bbf{e}^{u}_h \jumpr_\bbf{n} \|^2_{L^2(F)}.
    \end{split}
\end{equation}
Now, we proceed to study the other terms of \eqref{eq: convergence estimate 1}. For $\bbf{R_2}, \bbf{R_3}$ and $\bbf{R_4}$ we apply the Cauchy-Schwarz and Young's inequalities for the volume integrals to obtain
\begin{align}
    \begin{split}
         \bbf{R_2} & \lesssim h^{2m_\mcal{S}+2}|p^S|_{H^{m_\mcal{S}+1}(\Omega_S)}^2 + h_K^{2m_\mcal{D}+2}|p^D|_{H^{m_\mcal{D}+1}(\Omega_D)}^2 + \| \nabla_h \cdot \evh{e}{u^\mcal{S}} \|^2_{L^2(\Omega_S)} + \| \nabla_h \cdot \evh{e}{u^\mcal{D}} \|_{L^2(\Omega_D)}^2 \\ & + \sum_{F \in \scr{F}_\mcal{S}} h_K^{-1} \| \jumpl \bbf{e}^u_h \jumpr \|^2_{L^2(F)} + \sum_{F \in \scr{F}_\mcal{D}} h_K^{-1} \| \jumpl \evh{e}{u^\mcal{D}} \jumpr_\bbf{n} \|^2_{L^2(F)} + \sum_{F \in \Gamma} h_K^{-1} \| \jumpl \bbf{e}^u_h \jumpr_\bbf{n} \|^2_{L^2(F)},
    \end{split} \\
    \bbf{R_3} & \lesssim h^{2t_\mcal{S}}| \bbf{u}^S |^2_{H^{t_\mcal{S}+1}(\Omega_S)} + h_K^{2t_\mcal{D}} | \bbf{u}^D|^2_{H^{t_\mcal{D}+1}(\Omega_D)} + 2(\| \eph{e}{p^\mcal{S}} \|^2_{L^2(\Omega_S)} + \| \eph{e}{p^\mcal{D}} \|^2_{L^2(\Omega_D)}), \\
    \begin{split}
        \bbf{R_4} & \lesssim h^{2(m_\mcal{S}+1)} |p^S|^2_{H^{m_\mcal{S}+1}(\Omega_S)} + h^{2(m_\mcal{D}+1)} |p^D|^2_{H^{m_\mcal{D}+1}(\Omega_D)} + \sum_{F \in \scr{F}_\mcal{S}^i} \xi_S \| \jumpl \eph{e}{p^\mcal{S}} \jumpr \|_{L^2(F)}^2 \\ &+ \sum_{F \in \scr{F}_\mcal{D}^i} \xi_D \| \jumpl \eph{e}{p^\mcal{D}} \jumpr \|_{L^2(F)}^2 + \sum_{F \in \Gamma} \xi_\Gamma \| \jumpl \eph{e}{p} \jumpr \|_{L^2(F)}^2.
    \end{split}
\end{align}
Finally, putting together all the previous bounds we obtain:
\begin{equation}
    \begin{split}
        \tnormdivq{\evh{e}{u}}{} + \tnormpq{\eph{e}{p}}{} \lesssim& h^{2t_\mcal{S}}|\bbf{u}^S|_{H^{t_\mcal{S}+1}(\Omega_S)}^2 + h^{2t_\mcal{D}}|\bbf{u}^D|_{H^{t_\mcal{D}+1}(\Omega_D)}^2 + h^{2t}|\bbf{u}|_{H^{t+1}(\Omega)}^2 \\ &+ h^{2m_\mcal{S}+2} |p^\mcal{S}|_{H^{m_\mcal{S}+1}(\Omega_\mcal{S})} + h^{2m_\mcal{D}+2} |p^\mcal{D}|_{H^{m_\mcal{D}+1}(\Omega_\mcal{D})},
    \end{split}
\end{equation}
which implies
\begin{equation}
    \begin{split}
        \tnormeq{(\evh{e}{u},\eph{e}{p})}{} \lesssim& h^{2t_\mcal{S}}|\bbf{u}^S|_{H^{t_\mcal{S}+1}(\Omega_S)}^2 + h^{2t_\mcal{D}}|\bbf{u}^D|_{H^{t_\mcal{D}+1}(\Omega_D)}^2 + h^{2t}|\bbf{u}|_{H^{t+1}(\Omega)}^2 \\ &+ h^{2m_\mcal{S}+2} |p^\mcal{S}|_{H^{m_\mcal{S}+1}(\Omega_\mcal{S})} + h^{2m_\mcal{D}+2} |p^\mcal{D}|_{H^{m_\mcal{D}+1}(\Omega_\mcal{D})},
    \end{split}
\end{equation}
with $t = \min\{t_\mcal{S}, t_\mcal{D}\}$, and the proof is completed.
\end{proof}

\section{Numerical results}\label{sec: numerical results}
In this section, we present numerical experiments to verify the error bounds established in the previous section and to assess the practical performance of the proposed discretization. The numerical implementations are carried out in the open-source Lymph MATLAB library \cite{lymph}, implementing the PolyDG method for multi-physics in $2D$ domains.
\subsection{Test case 1: linear constitutive law (Newtonian fluids)}\label{sec: testcase1}
We consider a numerical convergence test for a Newtonian fluid, on a square domain $\Omega = (-1, 1) \times (-1, 1)$ with an interface $\Gamma = \{ x=0 \}$. On the left part of the domain, $\Omega_S = (-1, 0) \times (-1, 1)$, we solve the Stokes equation, while on the right part of the domain, $\Omega_D = (0, 1) \times (-1, 1)$, we solve the Darcy equation. We consider the manufactured solution:
\begin{equation}
    \bbf{u}_\mcal{S}^{ex}(\bbf{x}) = \left[
    \begin{array}{c}
         x^3 \pi \cos(\pi y) \sin(\pi y) \\
         -\frac{3}{2} x^2 \sin(\pi y)^2
    \end{array}
    \right], \ \ \ p_\mcal{S}^{ex}(\bbf{x}) = e^y \cos(\pi x), 
\end{equation}
\begin{equation}
    \bbf{u}_\mcal{D}^{ex}(\bbf{x}) = \left[
    \begin{array}{c}
         x^2 \pi \cos(\pi y) \sin(\pi y) \\
         -x \sin(\pi y)^2
    \end{array}
    \right], \ \ \ p_\mcal{D}^{ex}(\bbf{x}) = e^y \cos(\pi x) - 6 \pi x^2 \cos(\pi y) \sin(\pi y). 
\end{equation}
The problem is solved with polynomial degree $l = 2,3,4$ and on a sequence of successively finer polygonal meshes with diameter $h = 0.2485, 0.1811, 0.1301, 0.0902, 0.0640, 0.0457, 0.0325, 0.0228$.
In Figure~\ref{fig:test1} we report the computed error estimates in the energy norm $\tnorme{(\cdot,\cdot)}$ and visually highlight the square root of the numerical order of convergence. The results are in agreement with Theorem \ref{thm: error estimate} as the error goes to zero with the predicted algebraic rate $h^l$, as $h$ goes to zero.
\subsection{Test case 2: Carreau constitutive law}
We next present a numerical convergence test for a non-Newtonian fluid, with viscosity modeled following the Carreau constitutive law. We consider a domain defined as in Section~\ref{sec: testcase1}. We took a similar manufactured solution:
\begin{align}
    &\bbf{u}_\mcal{S}^{ex}(\bbf{x}) = \left[
    \begin{array}{c}
         x^3 \pi \cos(\pi y) \sin(\pi y) \\
         -\frac{3}{2} x^2 \sin(\pi y)^2
    \end{array}
    \right], \ \ \ p_\mcal{S}^{ex}(\bbf{x}) = e^y \cos(\pi x), \\ 
    &\bbf{u}_\mcal{D}^{ex}(\bbf{x}) = \left[
    \begin{array}{c}
         x^2 \pi \cos(\pi y) \sin(\pi y) \\
         -x \sin(\pi y)^2
    \end{array}
    \right], \ \ \ p_\mcal{D}^{ex}(\bbf{x}) = e^y \cos(\pi x) - (g_\mcal{S}(|\bbf{D}(\vh{u}{S})|)\bbf{D}(\vh{u}{S})\bbf{n})\bbf{n},
\end{align}
assuming the following constitutive equations 
\begin{align}
    &g_\mcal{S}(|\bbf{D}(\vh{u}{S})|) = \underline{\nu_\mcal{S}}+(\overline{\nu_\mcal{S}}-\underline{\nu_\mcal{S}})\left(1 + \frac{1}{2}|\bbf{D}(\vh{u}{S})|^2)\right)^{-\frac{1}{4}},\\
    &g_\mcal{D}(|\vh{u}{S}|) = \underline{\nu_\mcal{D}}+(\overline{\nu_\mcal{D}}-\underline{\nu_\mcal{D}})\left(1 + \frac{1}{2}|\vh{u}{D}|^2\right)^{-\frac{1}{4}},
\end{align}
with $\underline{\nu_\mcal{S}} = \underline{\nu_\mcal{D}} = 0.001$ and $\overline{\nu_\mcal{S}} = \overline{\nu_\mcal{D}} = 0.5$ \cite{SDFDG}.
The forcing terms and the boundary conditions are then set accordingly. The problem is solved with $l = 2, 3, 4$ on the same sequence of meshes of Section~\ref{sec: testcase1}. For the solution of the non-Newtonian system, we adopt a fixed-point iteration scheme with tolerance $10^{-10}$. In Figure~\ref{fig:test2} we report the computed error estimates of the energy norm $\tnorme{(\cdot,\cdot)}$ and visually highlight the root mean square of the numerical order of convergence. Again, the results are in agreement with the theory defined in Section~\ref{sec: error estimates} as the error goes to zero with the predicted algebraic rate $h^l$, as $h$ goes to zero.

\begin{figure}[H]
   \begin{subfigure}[b]{0.5\textwidth}
    \centering
    \resizebox{\textwidth}{!}{\definecolor{mycyan}{rgb}{0.00000,1.00000,1.00000}%
\definecolor{myblue}{HTML}{1E2A78}%
\definecolor{myred}{HTML}{D7265B}%
\definecolor{myorange}{HTML}{F49D37}%
\pgfplotsset{
  log x ticks with fixed point/.style={
      xticklabel={
        \pgfkeys{/pgf/fpu=true}
        \pgfmathparse{exp(\tick)}%
        \pgfmathprintnumber[fixed  zerofill, precision=2]{\pgfmathresult}
        \pgfkeys{/pgf/fpu=false}
      }
  }
}
\begin{tikzpicture}

\begin{axis}[%
xticklabel style={rotate=45, anchor=east},
width=3.875in,
height=2.36in,
at={(2.6in,1.099in)},
scale only axis,
xmode=log,
xmin=0.0228,
xmax=0.2485,
xminorticks=true,
xlabel = {$h$ [-]},
ylabel = {},
log x ticks with fixed point,
ymode=log,
ymin=1e-7,
ymax=1e1,
yminorticks=true,
axis background/.style={fill=white},
title style={font=\bfseries},
title={},
xmajorgrids,
xminorgrids,
ymajorgrids,
yminorgrids,
legend pos = south east,
legend style={legend cell align=left, align=left, draw=white!15!black}
]
              
\addplot [color=myred, line width=1.0pt]
  table[row sep=crcr]{%
0.2485  1.0468\\
0.1811  0.6111\\
0.1301  0.2866\\
0.0902  0.1071\\
0.0640  0.0446\\
0.0457  0.0231\\
0.0325  0.0113\\
0.0228  0.0054\\
};
\addlegendentry{$l = 2$}

\fill[myred] (0.2485,1.0468) circle (2pt);
\fill[myred] (0.1811,0.6111) circle (2pt);
\fill[myred] (0.1301,0.2866) circle (2pt);
\fill[myred] (0.0902,0.1071) circle (2pt);
\fill[myred] (0.0640,0.0446) circle (2pt);
\fill[myred] (0.0457,0.0231) circle (2pt);
\fill[myred] (0.0325,0.0113) circle (2pt);
\fill[myred] (0.0228,0.0054) circle (2pt);

\addplot [color=myblue, line width=1.0pt]
  table[row sep=crcr]{%
0.2485  0.1047\\
0.1811  0.0382\\
0.1301  0.0134\\
0.0902  0.0043\\
0.0640  0.0015\\
0.0457  5.2882e-04\\
0.0325  1.8984e-04\\
0.0228  6.8765e-05\\
};
\addlegendentry{$l = 3$}

\fill[myblue] (0.2485,0.1047) circle (2pt);
\fill[myblue] (0.1811,0.0382) circle (2pt);
\fill[myblue] (0.1301,0.0134) circle (2pt);
\fill[myblue] (0.0902,0.0043) circle (2pt);
\fill[myblue] (0.0640,0.0015) circle (2pt);
\fill[myblue] (0.0457,5.2882e-04) circle (2pt);
\fill[myblue] (0.0325,1.8984e-04) circle (2pt);
\fill[myblue] (0.0228,6.8765e-05) circle (2pt);

\addplot [color=myorange, line width=1.0pt]
  table[row sep=crcr]{%
0.2485  0.0089\\
0.1811  0.0024\\
0.1301  6.2836e-04\\
0.0902  1.5271e-04\\
0.0640  3.7791e-05\\
0.0457  9.6400e-06\\
0.0325  2.5079e-06\\
0.0228  6.4392e-07\\
};
\addlegendentry{$l = 4$}

\fill[myorange] (0.2485,0.0089) circle (2pt);
\fill[myorange] (0.1811,0.0024) circle (2pt);
\fill[myorange] (0.1301,6.2836e-04) circle (2pt);
\fill[myorange] (0.0902,1.5271e-04) circle (2pt);
\fill[myorange] (0.0640,3.7791e-05) circle (2pt);
\fill[myorange] (0.0457,9.6400e-06) circle (2pt);
\fill[myorange] (0.0325,2.5079e-06) circle (2pt);
\fill[myorange] (0.0228,6.4392e-07) circle (2pt);

\node[right, align=left, text=black, font=\footnotesize]
at (axis cs:0.0328,0.0035) {$2.1222$};

\addplot [color=black, line width=1.5pt]
  table[row sep=crcr]{%
2.51e-02   0.0025\\
0.0325   0.0025\\
0.0325   0.0043\\
2.51e-02   0.0025\\
};

\node[right, align=left, text=black, font=\footnotesize]
at (axis cs:0.0328, 7e-05) {$3.0721$};

\addplot [color=black, line width=1.5pt]
  table[row sep=crcr]{%
2.51e-02   4e-05\\
0.0325   4e-05\\
0.0325   8.8467e-05\\
2.51e-02   4e-05\\
};

\node[right, align=left, text=black, font=\footnotesize]
at (axis cs:0.0328, 5e-07) {$3.9981$};

\addplot [color=black, line width=1.5pt]
  table[row sep=crcr]{%
2.51e-02   3e-07\\
0.0325   3e-07\\
0.0325   8.4284e-07\\
2.51e-02   3e-07\\
};

\end{axis}
\end{tikzpicture}%
}
        \caption[]%
         {Test case 1: Computed errors $\tnorme{(\bbf{e}^u,e^p)}$ and the root mean square of their numerical order of convergence.}    
        \label{fig:test1}
    \end{subfigure}\hspace{3mm}
    \begin{subfigure}[b]{0.5\textwidth}
      \centering
    \resizebox{\textwidth}{!}{\definecolor{mycyan}{rgb}{0.00000,1.00000,1.00000}%
\definecolor{myblue}{HTML}{1E2A78}%
\definecolor{myred}{HTML}{D7265B}%
\definecolor{myorange}{HTML}{F49D37}%
\pgfplotsset{
  log x ticks with fixed point/.style={
      xticklabel={
        \pgfkeys{/pgf/fpu=true}
        \pgfmathparse{exp(\tick)}%
        \pgfmathprintnumber[fixed  zerofill, precision=2]{\pgfmathresult}
        \pgfkeys{/pgf/fpu=false}
      }
  }
}
\begin{tikzpicture}

\begin{axis}[%
xticklabel style={rotate=45, anchor=east},
width=3.875in,
height=2.36in,
at={(2.6in,1.099in)},
scale only axis,
xmode=log,
xmin=0.0228,
xmax=0.2485,
xminorticks=true,
xlabel = {$h$ [-]},
ylabel = {},
log x ticks with fixed point,
ymode=log,
ymin=1e-7,
ymax=1e1,
yminorticks=true,
axis background/.style={fill=white},
title style={font=\bfseries},
title={},
xmajorgrids,
xminorgrids,
ymajorgrids,
yminorgrids,
legend pos = south east,
legend style={legend cell align=left, align=left, draw=white!15!black}
]
              
\addplot [color=myred, line width=1.0pt]
  table[row sep=crcr]{%
0.2485  0.6275\\
0.1811  0.3216\\
0.1301  0.1567\\
0.0902  0.0717\\
0.0640  0.0323\\
0.0457  0.0159\\
0.0325  0.0079\\
0.0228  0.0040\\
};
\addlegendentry{$l = 2$}

\fill[myred] (0.2485,0.6275) circle (2pt);
\fill[myred] (0.1811,0.3216) circle (2pt);
\fill[myred] (0.1301,0.1567) circle (2pt);
\fill[myred] (0.0902,0.0717) circle (2pt);
\fill[myred] (0.0640,0.0323) circle (2pt);
\fill[myred] (0.0457,0.0159) circle (2pt);
\fill[myred] (0.0325,0.0079) circle (2pt);
\fill[myred] (0.0228,0.0040) circle (2pt);

\addplot [color=myblue, line width=1.0pt]
  table[row sep=crcr]{%
0.2485  0.1031\\
0.1811  0.0372\\
0.1301  0.0133\\
0.0902  0.0044\\
0.0640  0.0016\\
0.0457  5.5282e-04\\
0.0325  1.9915e-04\\
0.0228  7.6223e-05\\
};
\addlegendentry{$l = 3$}

\fill[myblue] (0.2485,0.1031) circle (2pt);
\fill[myblue] (0.1811,0.0372) circle (2pt);
\fill[myblue] (0.1301,0.0133) circle (2pt);
\fill[myblue] (0.0902,0.0044) circle (2pt);
\fill[myblue] (0.0640,0.0016) circle (2pt);
\fill[myblue] (0.0457,5.5282e-04) circle (2pt);
\fill[myblue] (0.0325,1.9915e-04) circle (2pt);
\fill[myblue] (0.0228,7.6223e-05) circle (2pt);

\addplot [color=myorange, line width=1.0pt]
  table[row sep=crcr]{%
0.2485  0.0090\\
0.1811  0.0025\\
0.1301  6.1823e-04\\
0.0902  1.4640e-04\\
0.0640  3.6236e-05\\
0.0457  9.2331e-06\\
0.0325  2.4268e-06\\
0.0228  6.2568e-07\\
};
\addlegendentry{$l = 4$}

\fill[myorange] (0.2485,0.0090) circle (2pt);
\fill[myorange] (0.1811,0.0025) circle (2pt);
\fill[myorange] (0.1301,6.1823e-04) circle (2pt);
\fill[myorange] (0.0902,1.4640e-04) circle (2pt);
\fill[myorange] (0.0640,3.6236e-05) circle (2pt);
\fill[myorange] (0.0457,9.2331e-06) circle (2pt);
\fill[myorange] (0.0325,2.4268e-06) circle (2pt);
\fill[myorange] (0.0228,6.2568e-07) circle (2pt);

\node[right, align=left, text=black, font=\footnotesize]
at (axis cs:0.0328,0.0035) {$2.1222$};

\addplot [color=black, line width=1.5pt]
  table[row sep=crcr]{%
2.51e-02   0.0025\\
0.0325   0.0025\\
0.0325   0.0043\\
2.51e-02   0.0025\\
};

\node[right, align=left, text=black, font=\footnotesize]
at (axis cs:0.0328, 7e-05) {$3.0261$};

\addplot [color=black, line width=1.5pt]
  table[row sep=crcr]{%
2.51e-02   4e-05\\
0.0325   4e-05\\
0.0325   8.7421e-05\\
2.51e-02   4e-05\\
};

\node[right, align=left, text=black, font=\footnotesize]
at (axis cs:0.0328, 5e-07) {$3.9481$};

\addplot [color=black, line width=1.5pt]
  table[row sep=crcr]{%
2.51e-02   3e-07\\
0.0325   3e-07\\
0.0325   8.3202e-07\\
2.51e-02   3e-07\\
};

\end{axis}
\end{tikzpicture}%
}
        \caption[]%
        {Test case 2: Computed errors $\tnorme{(\bbf{e}^u,e^p)}$ and the root mean square of their numerical order of convergence.}    
        \label{fig:test2}
    \end{subfigure}\hfill
\end{figure}

\section{Conclusions}\label{sec: Conclusion}
We have developed and analyzed a polytopal discontinuous Galerkin method for the numerical approximation of a coupled non-Newtonian Stokes-Darcy system describing the interaction between a non-Newtonian free-flow fluid and a non-Newtonian porous-medium flow. A complete well-posedness analysis has been established for the continuous problem. Within the framework of generalized inf-sup theory, we also prove the well-posedness, stability, and convergence of the proposed numerical method, and derived corresponding error estimates.  The theoretical estimates were validated through two numerical convergence tests using manufactured solutions: the first assuming a linear (Newtonian) viscosity, and the second assuming a nonlinear (non-Newtonian) viscosity following the Carreau model, which is commonly used for shear-thinning non-Newtonian fluids. A natural extension of this work would be to incorporate more general nonlinear viscosity laws, such as a Forchheimer term in the porous medium model, as considered in \cite{SDFDG}, used to predict high velocity flow in porous media \cite{GiraultWheeler}, or a Carreau--Yasuda model \cite{CarreauYasuda_Santesarti}, employed in hemodynamic applications \cite{CarreauYasuda}. Further directions include the analysis of the non-Newtonian coupled problem under more general mesh assumptions, allowing an $hp$ analysis that also accounts for the dependence on the polynomial degree.

\bibliographystyle{abbrv}
\bibliography{bibliography}

@incollection{AntoniettiCangianiCollisDongGeorgoulisGianiHouston2016,
  author    = {Antonietti, Paola F. and Cangiani, Andrea and Collis, James and Dong, Zhaonan and Georgoulis, Emmanuil H. and Giani, Stefano and Houston, Paul},
  title     = {Review of Discontinuous {G}alerkin Finite Element Methods for Partial Differential Equations on Complicated Domains},
  booktitle = {Building Bridges: Connections and Challenges in Modern Approaches to Numerical Partial Differential Equations},
  series     = {Lecture Notes in Computational Science and Engineering},
  volume     = {114},
  pages      = {279--308},
  publisher  = {Springer},
  year       = {2016},
  doi        = {10.1007/978-3-319-41640-3_8}
}

@book{CangianiDongGeorgoulisHouston2017,
  author    = {Cangiani, Andrea and Dong, Zhaonan and Georgoulis, Emmanuil H. and Houston, Paul},
  title     = {hp-Version Discontinuous {G}alerkin Methods on Polygonal and Polyhedral Meshes},
  series     = {SpringerBriefs in Mathematics},
  publisher  = {Springer},
  address    = {Cham},
  year       = {2017},
  isbn       = {978-3-319-67672-2},
  doi        = {10.1007/978-3-319-67673-9}
}

@article{AntoniettiMascottoVeraniZonca2022,
  author  = {Antonietti, Paola F. and Mascotto, Lorenzo and Verani, Marco and Zonca, Stefano},
  title   = {Stability Analysis of Polytopic Discontinuous {G}alerkin Approximations of the {S}tokes Problem with Applications to Fluid--Structure Interaction Problems},
  journal = {Journal of Scientific Computing},
  volume   = {90},
  number   = {23},
  year     = {2022},
  doi      = {10.1007/s10915-021-01695-6}
}

@article{AntoniettiFacciolaRussoVerani2019,
  author  = {Antonietti, Paola F. and Facciol{\`a}, Chiara and Russo, Alessandro and Verani, Marco},
  title   = {Discontinuous {G}alerkin Approximation of Flows in Fractured Porous Media on Polytopic Grids},
  journal = {SIAM Journal on Scientific Computing},
  volume   = {41},
  number   = {1},
  pages    = {A109--A138},
  year     = {2019},
  doi      = {10.1137/17M1138194}
}

@article{Antonietti2020,
  author  = {Antonietti, Paola F.  and Facciol\`a, C. and Verani, M.},
  title   = {Polytopic Discontinuous {G}alerkin Methods for the Numerical Modelling of Flow in Porous Media with Networks of Intersecting Fractures},
  journal = {Journal of Computational and Applied Mathematics},
  year    = {2021},
  volume   = {393},
  pages    = {113506}
}

@article{YeZhang2020,
  author  = {Ye, X. and Zhang, S.},
  title   = {A Conforming Discontinuous {G}alerkin Finite Element Method for the {S}tokes Problem on Polytopal Meshes},
  journal = {Journal of Scientific Computing},
  year    = {2021},
  volume   = {87},
  pages    = {1--26}
}

@article{LipnikovVassilevYotov,
  author  = {Lipnikov, K. and Vassilev, D. and Yotov, I.},
  title   = {Discontinuous {G}alerkin and Mimetic Finite Difference Methods for Coupled {S}tokes--Darcy Flows on Polygonal and Polyhedral Grids},
  journal = {Numerical Methods for Partial Differential Equations},
  year    = {2014},
  volume   = {30},
  number   = {2},
  pages    = {515--543}
}

@article{SDFDG,
  title={{The Well-Posedness of Discontinuous {G}alerkin Approximation for the Non-Newtonian {S}tokes-Darcy-Forchheimer Coupling System}},
  author={Jingyan, Hu and Guanyu, Zhou},
  journal={Journal of Scientific Computing},
  volume={97},
  year={2023},
  publisher={Springer}
}

@article{MappedCoercivity,
  title = {{The concept of mapped coercivity for nonlinear operators in Banach spaces}},
    journal = {Journal of Functional Analysis},
    volume = {289},
    number = {3},
    pages = {110893},
    year = {2025},
    issn = {0022-1236},
    doi = {https://doi.org/10.1016/j.jfa.2025.110893},
    url = {https://www.sciencedirect.com/science/article/pii/S0022123625000758},
    author = {Becker, R. and Braack, M.}
    }

@article{Stability,
  title={{Stability Analysis of Polytopic Discontinuous {G}alerkin Approximations of the {S}tokes Problem with Applications to Fluid–Structure Interaction Problems}},
  author={Antonietti, Paola F.  and Mascotto, L. and Verani, M. and Zonca, S.},
  journal={Journal of Scientific Computing},
  volume={90},
  year={2021},
  publisher={Springer}
}

@book{SobolevEmbedding,
  author       = {Di Pietro, D.A. and Ern, A.} ,
  title        = {{Mathematical Aspects of Discontinuous {G}alerkin Methods}},
  publisher = {Springer},
  year         = 2012,
  isbn      = {978-3-642-22979-4}
}

@book{HighOrderpolynomialapprox,
  author       = {Di Pietro, D.A. and Droniou, J.} ,
  title        = {{The Hybrid High-Order Method for Polytopal Meshes}},
  publisher = {Springer},
  year         = 2020,
  isbn      = {978-3-030-37202-6}
}

@article{Leray-Lions,
  title={{A hybrid high-order method for Leray-Lions elliptic equations on general meshes}},
  author={Di Pietro, D.A. and Droniou, J.},
  journal={Mathematics of Computations},
  volume={86},
  number={307},
  pages={2159-2191},
  year={2017},
  publisher={Springer}
}

@article{AntoniettiGianiHouston_hpCompositeDG,
author = {Antonietti, Paola F.  and Giani, S. and Houston, P.},
title = {{$hp$-Version Composite Discontinuous {G}alerkin Methods for Elliptic Problems on Complicated Domains}},
journal = {SIAM Journal on Scientific Computing},
volume = {35},
number = {3},
pages = {A1417-A1439},
year = {2013},
doi = {https://doi.org/10.1137/120877246},
}

@article{BassiBottiColomboDiPietroTesini_flexibilityagglomeration,
title = {{On the flexibility of agglomeration based physical space discontinuous {G}alerkin discretizations}},
journal = {Journal of Computational Physics},
volume = {231},
number = {1},
pages = {45-65},
year = {2012},
issn = {0021-9991},
doi = {https://doi.org/10.1016/j.jcp.2011.08.018},
author = {Bassi, F. and Botti, L. and Colombo, A. and {Di Pietro}, D.A. and Tesini, P.}
}

@Article{Botti.Mascotto:26,
  author        = {Botti, M. and Mascotto, L.},
  title         = {{S}obolev--{P}oincar\'e inequalities for piecewise {$W^{1,p}$} functions over general polytopic meshes},
  year          = {2026},
  journal  = {SIAM Journal on Numerical Analysis},
  note = {Accepted for publication},
  arxiv         = {2504.03449},
  eprint        = {2504.03449},
  archiveprefix = {arXiv},
  primaryclass  = {math.NA}
}

@article{CangianiDongHoustonGeorgoulisHouston,
  author       = {Cangiani, A. and Dong, Z. and Georgoulis, E.H. and Houston, P.} ,
  title        = {{$hp$-version discontinuous {G}alerkin methods on polygonal and polyhedral meshes}},
  journal = {Mathematical Models and Methods in Applied Sciences},
  volume = {24},
    number = {10},
    pages = {2009-2041},
  year         = 2014
}

@article{NumericalAnalysisCoupStokesDarcyFlowsIndustrialFitrations,
author = {Hanspal, N. and Waghode, A. and Nassehi, V. and Wakeman, R.},
title = {{Numerical Analysis of Coupled {S}tokes/Darcy Flows in Industrial Filtrations}},
journal = {Transport in Porous Media},
volume = {64},
pages = {73-101},
doi = {10.1007/s11242-005-1457-3},
year = {2006},
}

@article{ErvinJenkinsSunFEM,
author = {Ervin, V. J. and Jenkins, E. W. and Sun, S.},
title = {{Coupled Generalized Nonlinear {S}tokes Flow with Flow through a Porous Medium}},
journal = {SIAM Journal on Numerical Analysis},
volume = {47},
number = {2},
pages = {929-952},
year = {2009},
doi = {https://doi.org/10.1137/070708354},
}

@article{ErvinJenkinsSunMORTARFEM,
title = {{Coupling nonlinear {S}tokes and Darcy flow using mortar finite elements}},
journal = {Applied Numerical Mathematics},
volume = {61},
number = {11},
pages = {1198-1222},
year = {2011},
issn = {0168-9274},
doi = {https://doi.org/10.1016/j.apnum.2011.08.002},
author = {Ervin, V.J. and Jenkins, E.W. and Sun,S.},
}

@article{GiraultRiviere,
author = {Girault, V. and Rivi\`{e}re, B.},
title = {{DG Approximation of Coupled Navier–{S}tokes and Darcy Equations by Beaver–Joseph–Saffman Interface Condition}},
journal = {SIAM Journal on Numerical Analysis},
volume = {47},
number = {3},
pages = {2052-2089},
year = {2009},
doi = {https://doi.org/10.1137/070686081},

}

@article{RiviereSD,
author = {Rivi\`{e}re, B.},
title = {{Analysis of a Discontinuous Finite Element Method for the Coupled {S}tokes and Darcy Problems}},
journal = {J Sci Comput},
volume = {22},
pages = {479-500},
year = {2005},
doi = {https://doi.org/10.1007/s10915-004-4147-3},
}

@article{GiraultWheeler,
  author    = {Girault, V. and Wheeler, M. F.},
  title     = {{Numerical discretization of a Darcy--Forchheimer model}},
  journal   = {Numerische Mathematik},
  year      = {2008},
  volume    = {110},
  number    = {2},
  pages     = {161-198},
  doi       = {https://doi.org/10.1007/s00211-008-0157-7},
}

@article{ChowCarey,
author = {Chow, S.-S. and Carey, G. F.},
title = {{Numerical approximation of generalized Newtonian fluids using Powell–Sabin–Heindl elements: I. theoretical estimates}},
journal = {International Journal for Numerical Methods in Fluids},
volume = {41},
number = {10},
pages = {1085-1118},
doi = {https://doi.org/10.1002/fld.480},
year = {2003}
}

@article{Discacciati_ApplicationThesis,
  title={{Domain decomposition methods for the coupling of surface and groundwater flows}},
  author={Distaccati, M.},
  journal={Ph.D. thesis, Ecole Polytechnique Federale de Sausanne, Sausanne, Switzerland},
  year={2024},
}

@article{Cesmelioglu_Applications,
title = {{An embedded–hybridized discontinuous {G}alerkin method for the coupled {S}tokes–Darcy system}},
author = {Cesmelioglu, A. and Rhebergen, S. and Wells, G.N.},
journal = {{Journal of Computational and Applied Mathematics}},
volume = {367},
pages = {112476},
year = {2020},
issn = {0377-0427},
doi = {https://doi.org/10.1016/j.cam.2019.112476},
}

@article{BrezziMarini,
author = {Arnold, D.N. and Brezzi, F. and Cockburn, B. and Marini, L.D.},
title = {{Unified Analysis of Discontinuous {G}alerkin Methods for Elliptic Problems}},
journal = {SIAM Journal on Numerical Analysis},
volume = {39},
number = {5},
pages = {1749-1779},
year = {2002},
doi = {https://doi.org/10.1137/S0036142901384162}
}

@article{DistaccatiMiglioQuarteroni,
author = {Discacciati, M. and Miglio, E. and Quarteroni, A.},
title = {Mathematical and numerical models for coupling surface and groundwater flows},
journal = {Applied Numerical Mathematics},
volume = {43},
number = {1},
pages = {57-74},
year = {2002},
doi = {https://doi.org/10.1016/S0168-9274(02)00125-3},
}

@article{DistaccatiQuarteroni,
  title={{Navier-{S}tokes/Darcy coupling: modeling, analysis, and numerical approximation}},
  author={Discacciati, M. and Quarteroni, A. and others},
  journal={Revista Matemática Complutense},
  volume={22},
  number={2},
  pages={315--426},
  year={2009}
}

@article{LiGaoZhangChen_SDD,
author = {Rui Li and Yali Gao and Chen-Song Zhang and Zhangxin Chen},
title = {{A {S}tokes–Darcy–Darcy model and its discontinuous {G}alerkin method on polytopic grids}},
journal = {Journal of Computational Physics},
volume = {501},
pages = {112780},
year = {2024},
issn = {0021-9991},
doi = {https://doi.org/10.1016/j.jcp.2024.112780},
}

@book{RiviereDG,
author = {Rivi\`{e}re, B.},
title = {{Discontinuous {G}alerkin Methods for Solving Elliptic and Parabolic Equations}},
publisher = {Society for Industrial and Applied Mathematics},
year = {2008},
doi = {10.1137/1.9780898717440},
}

@article{JagerMikelic_BJS,
 author = {Jäger, W. and Mikelić, A.},
 title = {{On the Interface Boundary Condition of Beavers, Joseph, and Saffman}},
 journal = {SIAM Journal on Applied Mathematics},
 number = {4},
 pages = {1111--1127},
 publisher = {Society for Industrial and Applied Mathematics},
 volume = {60},
 year = {2000}
}

@article{BurmanHansbo_stabilizedSD,
author = {Burman, E. and Hansbo, P.},
title = {{A unified stabilized method for {S}tokes’ and Darcy's equations}},
journal = {Journal of Computational and Applied Mathematics},
volume = {198},
number = {1},
pages = {35-51},
year = {2007},
issn = {0377-0427},
doi = {https://doi.org/10.1016/j.cam.2005.11.022},
}

@article{RiviereYotov,
 author = {Rivi\`{e}re, B. and Yotov, I.},
 journal = {SIAM Journal on Numerical Analysis},
 number = {5},
 pages = {1959--1977},
 publisher = {Society for Industrial and Applied Mathematics},
 title = {{Locally Conservative Coupling of {S}tokes and Darcy Flows}},
 volume = {42},
 year = {2005}
}

@article{badia_quaini_quarteroni_2008, title={{Coupling Biot and Navier-{S}tokes problems for fluid-poroelastic structure interaction}},
author={Badia, S. and Quaini, A. and Quarteroni, A.}, 
journal = {Technical report, Universitat Politcnica de Catalunya},
year={2008}}

@phdthesis{Quaini_thesis,
  author = {Quaini, A.},
  title = {Algorithms for Fluid-Structure Interaction Problems Arising in Hemodynamics},
  school = {Ecole Polytechnique Fédérale de Lausanne, Switzerland},
  year = {2008}
}

@BOOK{book:nonnewtonian,
	author = {Chhabra, R.P. and Patel, Swati A.},
	title = {Non-Newtonian Flow and Applied Rheology, Third Edition},
	year = {2025},
	journal = {Non-Newtonian Flow and Applied Rheology: Engineering Applications, Third Edition},
	pages = {1 – 562},
	doi = {10.1016/C2022-0-01444-6},
	publisher={Elsevier}
}

@article{DUNN1995689,
title = {Fluids of differential type: Critical review and thermodynamic analysis},
journal = {International Journal of Engineering Science},
volume = {33},
number = {5},
pages = {689-729},
year = {1995},
issn = {0020-7225},
doi = {https://doi.org/10.1016/0020-7225(94)00078-X},
author = {J.E. Dunn and K.R. Rajagopal}
}

@article{CarreauYasuda,
    author = {Boyd, J. and Buick, J.M. and Green, S.},
    title = {{Analysis of the Casson and Carreau-Yasuda non-Newtonian blood models in steady and oscillatory flows using the lattice Boltzmann method}},
    journal = {Physics of Fluids},
    volume = {19},
    number = {9},
    pages = {093103},
    year = {2007},
    doi = {https://doi.org/10.1063/1.2772250},
}

@article{CarreauYasuda_Santesarti,
title = {{A quasi-analytical solution for Carreau–Yasuda-type shear-thinning flows in slightly tapered pipes using a truncated power-law model}},
journal = {Journal of Non-Newtonian Fluid Mechanics},
volume = {349},
pages = {105569},
year = {2026},
issn = {0377-0257},
doi = {https://doi.org/10.1016/j.jnnfm.2026.105569},
author = {Santesarti, G. and Marino, M. and Viola, F. and Verzicco, R. and Vairo, G.}
}

@article{HWNW1,
title = {{Development of a predictive mathematical model for coupled {S}tokes/Darcy flows in cross-flow membrane filtration}},
journal = {Chemical Engineering Journal},
volume = {149},
number = {1},
pages = {132-142},
year = {2009},
issn = {1385-8947},
doi = {https://doi.org/10.1016/j.cej.2008.10.012},
author = {Hanspal, N.S. and Waghode, A.N. and Nassehi, V. and Wakeman, R.J.}
}

@article{HWNW2,
title = {{Numerical Analysis of Coupled {S}tokes/Darcy Flows in Industrial Filtrations}},
journal = {Transport in Porous Media},
volume = {64},
pages = {73-101},
year = {2006},
doi = {https://doi.org/10.1007/s11242-005-1457-3},
author = {Hanspal, N.S. and Waghode, A.N. and Nassehi, V. and Wakeman, R.J.}
}

@article{CGHW,
author = {Cao, Y. and Gunzburger, M. and Hua, F. and Wang, X.},
title = {{Coupled {S}tokes-Darcy model with Beavers-Joseph interface boundary condition}},
volume = {8},
journal = {Communications in Mathematical Sciences},
number = {1},
publisher = {International Press of Boston},
pages = {1 - 25},
year = {2010},
}

@article{Beavers_Joseph_1967, title={Boundary conditions at a naturally permeable wall}, volume={30}, 
doi={10.1017/S0022112067001375}, number={1}, 
journal={Journal of Fluid Mechanics}, 
author={Beavers, G.S. and Joseph, D.D.}, 
year={1967}, 
pages={197–207}}

@article{Saffman,
author = {Saffman, P. G.},
title = {{On the Boundary Condition at the Surface of a Porous Medium}},
journal = {Studies in Applied Mathematics},
year = {1971},
volume = {50},
number = {2},
pages = {93-101},
doi = {https://doi.org/10.1002/sapm197150293}
}

@article{lymph,
author = {Antonietti, Paola F. and Bonetti, S. and Botti, M. and Corti, M. and Fumagalli, I. and Mazzieri, I.},
title = {{lymph: Discontinuous Polytopal Methods for Multi-Physics Differential Problems}},
year = {2025},
publisher = {Association for Computing Machinery},
volume = {51},
number = {1},
issn = {0098-3500},
doi = {https://doi.org/10.1145/3716310},
journal = {ACM Transaction on Mathematical Software}
}

@article{AntoniettiBonaldiMazzieri,
    author = {Antonietti, Paola F. and Bonaldi, F. and Mazzieri, I.},
    title = {{A high-order discontinuous {G}alerkin approach to the elasto-acoustic problem}},
    journal = {Computer Methods in Applied Mechanics and Engineering},
    volume = {358},
    pages = {112634},
    year = {2020},
    doi = {https://doi.org/10.1016/j.cma.2019.112634},
}

@article{BonaldiBrennerDroniouMasson,
    author = {Bonaldi, F. and Brenner, K. and Droniou, J. and Masson, R.},
    title = {{Gradient discretization of two-phase flows coupled with mechanical deformation in fractured porous media}},
    journal = {Computers and MAthematics with Applications},
    volume = {98},
    pages = {40-68},
    year = {2021},
    doi = {https://doi.org/10.1016/j.camwa.2021.06.017},
}

@article{KanschatRiviere2010,
    author = {Kanschat, G. and Rivière, B.},
    title = {A strongly conservative finite element method for the coupling of Stokes and Darcy flow},
    journal = {Journal of Computational Physics},
    volume = {229},
    number = {17},
    pages = {5933-5943},
    year = {2010},
    doi = {https://doi.org/10.1016/j.jcp.2010.04.021},
}

@article{CongreveHouston2014,
    author = {Kanschat, G. and Rivière, B.},
    title = {A strongly conservative finite element method for the coupling of Stokes and Darcy flow},
    journal={International Journal of Numerical Analysis and Modeling},
    volume = {11},
    number={3},
    pages={496–524},
    year={2014},
    url={https://www.global-sci.com/ijnam/article/view/10169}, 
}

\appendix

\section{Appendix}\label{app: proof bh cont}
In this section, we provide a complete proof of the continuity of $\bh{\cdot}{\cdot}{}$, i.e.,
\begin{equation}
    |\bh{\vh{v}{}}{\ph{p}{}}{}| \lesssim \tnormdiv{\vh{v}{}}{} \tnormp{\ph{p}{}}{} \qquad \forall \vh{v}{}\in\bbf{X}_h, \forall \ph{p}{}\in M_h.
\end{equation}
\begin{proof} From the definitions of the norms given in Definition \eqref{def: discrete norms} we immediately have
\begin{align}
    \begin{split}
        |\bh{\vh{v}{}}{\ph{p}{}}{\Gamma}| & = \sum_{F\in\Gamma} \int_F |\ph{p}{D} \jumpl\vh{v}{}\jumpr_\bbf{n}| ds \leq \sum_{F\in\Gamma} \|\ph{p}{D}\|_{L^2(F)} \|\jumpl\vh{v}{}\jumpr_\bbf{n}\|_{L^2(F)} \\ & = \sum_{F\in\Gamma} \|\ph{p}{D}\|_{L^2(F)} \|\jumpl\vh{v}{}\jumpr_\bbf{n}\|_{L^2(F)} \left(\frac{h_K}{h_K}\right)^\frac{1}{2} \leq \sum_{K\in\scr{K}:\partial K \cap \Gamma \neq \emptyset}  C_{\gamma} \tnormx{\vh{v}{}}{} \|\ph{p}{D}\|_{L^2(K)} \\ & \lesssim \tnormx{\vh{v}{}}{} \|\ph{p}{D}\|_{L^2(\Omega)},
    \end{split}\\
    \begin{split}
        |\bh{\vh{v}{S}}{\ph{p}{S}}{S}| &= \sum_{K \in \scr{K}_\mcal{S}}\int_K |\ph{p}{S} \nabla\cdot\vh{v}{S}| ds + \sum_{F \in \scr{F}_\mcal{S}}\int_F |\averagel \ph{p}{S} \averager \jumpl\vh{v}{S} \jumpr_\bbf{n}| ds\\ & \leq \|\ph{p}{S}\|_{L^2(\Omega_S)}\|\nabla\cdot\vh{v}{S}\|_{L^2(\Omega_S)} + \sum_{F \in \scr{F}_\mcal{S}} \|\averagel \ph{p}{S} \averager\|_{L^2(F)}\|\jumpl\vh{v}{S}\jumpr_\bbf{n}\|_{L^2(F)}\left(\frac{h_K}{h_K} \right)^\frac{1}{2}\\ & \lesssim \normp{\ph{p}{S}}{S} \normx{\vh{v}{S}}{S} +  \normp{\ph{p}{S}}{S} \normx{\vh{v}{S}}{S} \lesssim \normp{\ph{p}{S}}{S} \normx{\vh{v}{S}}{S},
    \end{split}\\
    \begin{split}
        |\bh{\vh{v}{D}}{\ph{p}{D}}{D}| &= \sum_{K \in \scr{K}_\mcal{D}}\int_K |\ph{p}{D} \nabla\cdot\vh{v}{D}| ds + \sum_{F \in \scr{F}_\mcal{D}}\int_F |\averagel \ph{p}{D} \averager \jumpl\vh{v}{D}\jumpr_\bbf{n}| ds\\ & \leq \|\ph{p}{D}\|_{L^2(\Omega_D)}\|\nabla\cdot\vh{v}{D}\|_{L^2(\Omega_D)} + \sum_{F \in \scr{F}_\mcal{D}} \|\averagel \ph{p}{D} \averager\|_{L^2(F)}\|\jumpl\vh{v}{D}\jumpr_\bbf{n}\|_{L^2(F)}\left(\frac{h_K}{h_K} \right)^\frac{1}{2} \\ & \lesssim \normp{\ph{p}{D}}{D} (\normx{\vh{v}{D}}{D} + \normll{\nabla_h \vh{v}{D}}{D}) + \normp{\ph{p}{D}}{D} \normx{\vh{v}{D}}{D} \\ & \lesssim \normp{\ph{p}{D}}{D} (\normx{\vh{v}{D}}{D} + \normll{\nabla_h \vh{v}{D}}{D}).
    \end{split}
\end{align}
Collecting the above bounds, we obtain
\begin{equation}
    |\bh{\vh{v}{}}{\ph{p}{}}{}|  \lesssim \tnormdiv{\vh{v}{}}{} \tnormp{\ph{p}{}}{}.
\end{equation}
\end{proof}
\section{Appendix}\label{app: proof ah cont}
In this section, we prove the continuity and the monotonicity of $\ah{\cdot}{\cdot}{}$, i.e., 
\begin{equation}
    |a_h(\vh{u}{}, \vh{w}{})-\ah{\vh{v}{}}{\vh{w}{}}{}| \lesssim \tnormx{\vh{u}{}-\vh{v}{}}{}\tnormx{\vh{w}{}}{} \qquad \forall \vh{u}{}, \vh{v}{}, \vh{w}{} \in \bbf{X}_h,
\end{equation}
and
\begin{equation}
    (a_h(\vh{u}{}, \vh{u}{} - \vh{v}{}) - a_h(\vh{v}{}, \vh{u}{} - \vh{v}{})) \gtrsim \tnormxq{\vh{u}{} - \vh{v}{}}{} \qquad \forall \vh{u}{}, \vh{v}{} \in \bbf{X}_h.
\end{equation}
First, we prove the continuity.
\begin{proof}
    We use the Cauchy-Schwarz inequality, the discrete trace inequality \eqref{lemm: trace inverse inequality}, the inverse inequalities, eq. \eqref{eq: g_S bdd and Lip continuous} and eq. \eqref{eq: g_D bdd and Lip continuous}:
\begin{align}
&\left|\sum_{K\in\scr{K}_\mcal{S}}\int_K (g_S(|\bbf{D}(\vh{u}{S})|)\bbf{D}(\vh{u}{S})-g_S(|\bbf{D}(\vh{v}{S})|)\bbf{D}(\vh{v}{S})):\bbf{D}(\vh{w}{S})\ dx \right|
\begin{aligned}[t]
&\leq \overline{\nu}_S \| \bbf{D}(\vh{u}{S}-\vh{v}{S}) \|_{L^2(\Omega_S)} \| \bbf{D}(\vh{w}{S}) \|_{L^2(\Omega_S)} \\
&\lesssim \normx{\vh{u}{S}-\vh{v}{S}}{S}\normx{\vh{w}{S}}{S},
\end{aligned} \\[6pt]
&\Bigg| \sum_{F\in\scr{F}_\mcal{S}} \int_F \averagel (g_S(|\bbf{D}(\vh{u}{S})|)\bbf{D}(\vh{u}{S})\bbf{n}_S-g_S(|\bbf{D}(\vh{v}{S})|)\bbf{D}(\vh{v}{S})\bbf{n}) \averager\cdot 
\begin{aligned}[t]
&\jumpl \vh{w}{S} \jumpr \ ds \Bigg| \lesssim \\ & \lesssim \sum_{K \in \scr{K}_\mcal{S}} \| \nabla (\vh{u}{S}-\vh{v}{S}) \|_{L^2(K)} \| \vh{w}{S} \|_{L^2(K)} \\
&\lesssim \normx{\vh{u}{S}-\vh{v}{S}}{S}\normx{\vh{w}{S}}{S},
\end{aligned} \\[6pt]
&\left| \sum_{F\in\scr{F}_\mcal{S}}\int_F \frac{\sigma_S}{h_K} \jumpl\vh{u}{S}-\vh{v}{S}\jumpr\cdot\jumpl\vh{w}{S}\jumpr\, ds \right|
\begin{aligned}[t]
&\leq \left( \sum_{F\in\scr{F}_\mcal{S}} \frac{\sigma_S}{h_K} \|\jumpl\vh{u}{S}-\vh{v}{S}\jumpr\|_{L^2(F)}^2\right)^{\frac{1}{2}}
\left( \sum_{F\in\scr{F}_\mcal{S}} \frac{\sigma_S}{h_K} \|\jumpl\vh{w}{S}\jumpr\|_{L^2(F)}^2\right)^{\frac{1}{2}} \\
&\leq \normx{\vh{u}{S}-\vh{v}{S}}{S} \normx{\vh{w}{S}}{S},
\end{aligned} \\[6pt]
\end{align}
\begin{align}
& \sum_{j=1}^{d-1}\Bigg| \sum_{F\in\Gamma}\int_F \rho^{-1}((\vh{u}{S}-\vh{v}{S})\cdot\bbf{t}_{\Gamma,j})
\begin{aligned}[t]
&(\vh{w}{S}\cdot\bbf{t}_{\Gamma,j})\, ds \Bigg| \leq
\\ & \leq \sum_{j=1}^{d-1}\left( \sum_{F\in\Gamma} \rho^{-1}\|(\vh{u}{S}-\vh{v}{S})\cdot\bbf{t}_{\Gamma,j}\|^2_{L^2(F)} \right)^{\frac{1}{2}}
\left( \sum_{F\in\Gamma} \rho^{-1}\|\vh{w}{S}\cdot\bbf{t}_{\Gamma,j}\|^2_{L^2(F)} \right)^{\frac{1}{2}},
\end{aligned} \\[6pt]
&\left| \sum_{K\in\scr{K}_\mcal{D}} \int_K (\bbf{K}^{-1}g_D(|\vh{u}{D}|)\vh{u}{D} - \bbf{K}^{-1}g_D(|\vh{v}{D}|)\vh{v}{D})\cdot\vh{w}{D}\, dx \right|
\begin{aligned}[t]
&\leq \frac{\overline{\nu}_D}{k_\text{max}}\|\vh{u}{D}-\vh{v}{D}\| \|\vh{w}{D}\| \\
&\lesssim \normx{\vh{u}{D}-\vh{v}{D}}{D}\normx{\vh{w}{D}}{D},
\end{aligned} \\[6pt]
&\left| \sum_{F\in\scr{F}_\mcal{D}^i}\int_F \frac{\sigma_D}{h_K} \jumpl\vh{u}{D} - \vh{v}{D} \jumpr_\bbf{n} \jumpl\vh{w}{D} \jumpr_\bbf{n}\, ds \right|
\begin{aligned}[t]
&\leq \normx{\vh{u}{D} - \vh{v}{D}}{D}\normx{\vh{w}{D}}{D}, \end{aligned}\\[6pt]
&\left| \sum_{F\in\Gamma} \int_F \frac{\sigma_\Gamma}{h_K}\jumpl\vh{u}{} - \vh{v}{} \jumpr_\bbf{n} \jumpl \vh{w}{} \jumpr_\bbf{n}\, ds \right|
\begin{aligned}[t]
&\leq \tnormx{\vh{u}{} - \vh{v}{}}{}\tnormx{\vh{w}{}}{}, \end{aligned}
\end{align}
where the hidden constants depend on $\overline{\nu_S},\overline{\nu_D}, k_{\max}$, and the inverse inequality constant. Putting together all this estimates we obtain \eqref{eq: Ah bound}.
\end{proof}
Now, we prove the monotonicity.
\begin{proof}
Let $\vh{u}{}, \vh{v}{} \in X_h$, by the discrete Korn's inequality, we can write:
\begin{equation}
    \begin{split}
        \sum_{K \in \scr{K}_\mcal{S}}& \int_K (g_S(|\bbf{D}(\vh{u}{S})|)\bbf{D}(\vh{u}{S}) - g_S(|\bbf{D}(\vh{v}{S})|)\bbf{D}(\vh{v}{S})) : \bbf{D}(\vh{u}{S} - \vh{v}{S}) dx \\ & \gtrsim \|\bbf{D}(\vh{u}{S} - \vh{v}{S}) \|_{L^2(\Omega_S)}^2 \gtrsim \| \nabla_h(\vh{u}{S} - \vh{v}{S})\|_{L^2(\Omega_S)}^2 - \sum_{F \in \scr{F}_\mcal{S}} \frac{1}{h_K} \| \jumpl \vh{u}{S} - \vh{v}{S} \jumpr \|_{L^2(F)}^2.
    \end{split}
\end{equation}
Now we apply the Lipschitz continuity of $g_S(\cdot)$ and Young's inequality, \eqref{eq: g_S bdd and Lip continuous}, to obtain:
\begin{equation}
    \begin{split}
        - \sum_{F \in \scr{F}_\mcal{S}}& \int_F \averagel (g_S(|\bbf{D}(\vh{u}{S})|)\bbf{D}(\vh{u}{S}) - g_S(| \bbf{D}(\vh{v}{S}) |)\bbf{D}(\vh{v}{S}))\bbf{n} \averager \jumpl \vh{u}{S} - \vh{v}{S} \jumpr ds  \\ & \gtrsim - \sum_{F \in \scr{F}_\mcal{S}} \| \averagel \bbf{D}(\vh{u}{S} - \vh{v}{S}) \averager \|_{L^2(F)} \| \jumpl \vh{u}{S} - \vh{v}{S} \jumpr \|_{L^2(F)} \\ & \gtrsim - \frac{1}{4\eta} \| \nabla_h (\vh{u}{S} - \vh{v}{S}) \|_{L^2(\Omega_S)}^2 - \sum_{F \in \scr{F}_\mcal{S}} \frac{\eta}{h_K} \| \jumpl \vh{u}{S} - \vh{v}{S} \jumpr \|_{L^2(F)}^2.
    \end{split}
\end{equation}
Additionally, using the monotonicity of $g_D(\cdot)$, (cf. \eqref{eq: g_D strongly monotone}) we can obtain:
\begin{equation}
    \sum_{K \in \scr{K}_\mcal{D}} \int_K (g_D(|\vh{u}{D}|)\vh{u}{D} - g_D(|\vh{v}{D}|)\vh{v}{D})\cdot (\vh{u}{D} - \vh{v}{D}) dx \gtrsim \| \vh{u}{D} - \vh{v}{D} \|^2_{L^2(\Omega_D)}.
\end{equation}
We can conclude that, for large enough $\sigma_S$, it holds
\begin{equation}
    \begin{split}
        (a_h(\vh{u}{}, \vh{u}{} - \vh{v}{}) - a_h(\vh{v}{}, \vh{u}{} - \vh{v}{}))& \gtrsim \left( 1 - \frac{1}{4\eta} \right) \| \nabla_h(\vh{u}{S} - \vh{v}{S}) \|_{L^2(\Omega_S)}^2 + \sum_{F \in \scr{F}_\mcal{S}} \left(\sigma_S -\frac{1 - \eta}{h_K}\right) \| \jumpl \vh{u}{S} - \vh{v}{S} \jumpr \|_{L^2(F)}^2 \\ &+ \sum_{j=1}^{d-1} \sum_{F \in \Gamma} \rho^{-1} \| (\vh{u}{S} - \vh{v}{S}) \cdot \bbf{t}_{\Gamma, j} \|_{L^2(F)}^2 + \| \vh{u}{D} - \vh{v}{D} \|^2_{L^2(\Omega_D)}\\ & + \sum_{F \in \scr{F}_\mcal{D}^i} \sigma_D \| \jumpl \vh{u}{D} - \vh{v}{D} \jumpr \|_{L^2(F)}^2 +  \sum_{F \in \Gamma} \sigma_\Gamma \| \jumpl (\vh{u}{} - \vh{v}{}) \jumpr_\bbf{n} \|_{L^2(F)}^2 \\ & \gtrsim \tnormxq{\vh{u}{} - \vh{v}{}}{},
    \end{split}
\end{equation}
and the proof is complete.
\end{proof}
\section{Appendix}\label{app: bound div}
In this section, we prove a bound for the $L^2$ norm of the divergence of $\vh{u}{D}$.
We start by considering the second equation of \eqref{eq: discrete SDF} and test it against $\ph{\Tilde{q}}{} = (0,\nabla \cdot \vh{u}{D}$), to obtain
\begin{equation}
    \begin{split}
         - b_h(\vh{u}{}, \ph{\Tilde{q}}{}) + & s_h(\ph{p}{},  \ph{\Tilde{q}}{}) = 
         \sum_{K \in \scr{K}_\mcal{D}} \int_K \nabla \cdot \vh{u}{D} \nabla \cdot \vh{u}{D} dx - \sum_{F \in \scr{F}_\mcal{D}} \int_F \averagel \nabla \cdot \vh{u}{D} \averager \jumpl \vh{u}{D} \jumpr_\bbf{n} ds  \\ &
         - \sum_{F \in \Gamma_h} \int_K \nabla \cdot \vh{u}{D} \jumpl \vh{u}{} \jumpr_\bbf{n} ds
         + \sum_{F \in \scr{F}_\mcal{D}^i} \int_F \xi_D \jumpl \ph{p}{D} \jumpr \jumpl \nabla \cdot \vh{u}{D} \jumpr ds + \sum_{F \in \Gamma} \int_F \xi_\Gamma \jumpl \ph{p}{} \jumpr \nabla \cdot \vh{u}{D} ds= 0.
    \end{split}
\end{equation}
From this we derive:
\begin{equation}
\begin{split}
    \|\nabla \cdot \vh{u}{D}\|^2_{L^2(\Omega_D)} &= \sum_{F \in \scr{F}_\mcal{D}} \int_F \averagel \nabla \cdot \vh{u}{D} \averager \jumpl \vh{u}{D} \jumpr_\bbf{n} ds + \sum_{F \in \Gamma_h} \int_F \nabla \cdot \vh{u}{D} \jumpl \vh{u}{} \jumpr_\bbf{n} ds - \sum_{F \in \scr{F}_\mcal{D}} \int_F \xi_D \jumpl \ph{p}{D} \jumpr \jumpl \nabla \cdot \vh{u}{D} \jumpr ds \\ & - \sum_{F \in \Gamma} \int_F \xi_\Gamma \jumpl \ph{p}{} \jumpr \jumpl \nabla \cdot \vh{u}{D} \jumpr ds \\ & \lesssim \sum_{F \in \scr{F}_\mcal{D}} \| \averagel \nabla \cdot \vh{u}{D} \averager \|_{L^2(F)} \| \jumpl \vh{u}{D} \jumpr_\bbf{n} \|_{L^2(F)}  + \sum_{K \in \Gamma_h} \| \nabla \cdot \vh{u}{D}  \|_{L^2(F)} \| \jumpl \vh{u}{} \jumpr_\bbf{n} \|_{L^2(F)} \\ & + \sum_{F \in \scr{F}_\mcal{D}^i} \xi_D \|\jumpl \ph{p}{D} \jumpr \|_{L^2(F)} \| \jumpl \nabla \cdot \vh{u}{D} \jumpr \|_{L^2(F)} + \sum_{F \in \Gamma} \xi_\Gamma \|\jumpl \ph{p}{} \jumpr \|_{L^2(F)} \| \nabla \cdot \vh{u}{D} \|_{L^2(F)}  \\ & \lesssim \sum_{K \in \scr{K}_\mcal{D}} \frac{1}{\eta} \| \nabla \cdot \vh{u}{D} \|_{L^2(K)}^2 + \sum_{F \in \scr{F}_\mcal{D}} \sigma_D \| \jumpl\vh{u}{D} \jumpr_\bbf{n} \|_{L^2(F)}^2 +  \sum_{K : \partial K \cap \Gamma \neq \emptyset} \frac{1}{\eta} \| \nabla \cdot \vh{u}{D} \|_{L^2(K)}^2 \\ & + \sum_{F \in \Gamma_h} \sigma_\Gamma \| \jumpl\vh{u}{} \jumpr_\bbf{n} \|_{L^2(F)}^2 + \sum_{F \in \scr{F}_\mcal{D}^i} \eta \xi_D \| \jumpl \ph{p}{D} \jumpr \|^2_{L^2(F)} + \sum_{K \in \scr{K}_\mcal{D}} \frac{1}{2} \|\nabla \cdot \vh{u}{D} \|^2_{L^2(K)} \\ & + \sum_{F \in \Gamma} \eta \xi_\Gamma \| \jumpl \ph{p}{} \jumpr \|^2_{L^2(F)} + \sum_{K : \partial K \cap \Gamma \neq \emptyset} \frac{1}{2} \|\nabla \cdot \vh{u}{D} \|^2_{L^2(K)}.
\end{split}
\end{equation}
Finally, we obtain the following estimate.
\begin{equation}
    \begin{split}
        \|\nabla \cdot \vh{u}{D}\|^2_{L^2(\Omega_D)} &\lesssim \sum_{F \in \scr{F}_\mcal{D}} \sigma_D \| \jumpl\vh{u}{D} \jumpr_\bbf{n} \|_{L^2(F)}^2 + \sum_{F \in \Gamma} \sigma_\Gamma \| \jumpl\vh{u}{} \jumpr_\bbf{n} \|_{L^2(F)}^2 + \sum_{F \in \scr{F}_\mcal{D}^i} \xi_D \| \jumpl \ph{p}{D} \jumpr \|^2_{L^2(F)} \\ & + \sum_{F \in \Gamma} \xi_\Gamma \| \jumpl \ph{p}{D} \jumpr \|^2_{L^2(F)}.
    \end{split}
\end{equation}
\end{document}